\newtheorem{thm}{Theorem}[section]
\newtheorem{lem}[thm]{Lemma}
\newtheorem{prop}[thm]{Proposition}
\newtheorem{cor}[thm]{Corollary}
\theoremstyle{definition}
\theoremstyle{definition}
\newtheorem{df}[thm]{Definition}
\theoremstyle{definition}
\newtheorem{rem}[thm]{Remark}
\newtheorem{nota}[thm]{Notation}
\theoremstyle{definition}
\newtheorem{exm}[thm]{Example}
\renewcommand{\phi}{\varphi}
\definecolor{purple}{RGB}{150,10,200} 
\newcommand{\CAs}{$C^*$-algebras}
\newcommand{\N}{\mathbb{N}}
\newcommand{\C}{\mathbb{C}}
\numberwithin{equation}{section}
\newcommand{\Aff}{\operatorname{Aff}}
\newcommand{\Her}{\mathrm{Her}}
\newcommand{\LAff}{\operatorname{LAff}}
\newcommand{\hm}{homomorphism}
\newcommand{\dt}{\delta}
\newcommand{\ep}{\varepsilon}
\newcommand{\td}{\tilde}
\newcommand{\la}{\langle}
\newcommand{\ra}{\rangle}
\newcommand{\andeqn}{\,\,\,{\rm and}\,\,\,}
\newcommand{\rforal}{\,\,\,{\rm for\,\,\,all}\,\,\,}
\newcommand{\CA}{$C^*$-algebra}
\newcommand{\SCA}{$C^*$-subalgebra}
\newcommand{\af}{{\alpha}}
\newcommand{\bt}{{\beta}}
\newcommand{\wtd}{\widetilde}
\newcommand{\diag}{{\rm diag}}
\newcommand{\wilog}{without loss of generality}
\newcommand{\Wlog}{Without loss of generality}
\newcommand{\beq}{\begin{eqnarray}}
\newcommand{\eneq}{\end{eqnarray}}
\newcommand{\tforal}{\,\,\,\text{for\,\,\,all}\,\,\,}
\newcommand{\tand}{\,\,\,\text{and}\,\,\,}
\title{Tracial approximate divisibility  and stable rank one}
\author{Xuanlong Fu, Kang Li, and Huaxin Lin}
\date{September 2021} 
\begin{document}

\maketitle

\begin{abstract}
{{In this paper, we show that every separable simple tracially approximately divisible 
\CA\ has strict comparison, and, it is either purely infinite or has stable rank one.
As a consequence, we show that every (non-unital) finite  {{simple}}
${\cal Z}$-stable \CA\, has stable rank one. }}
\end{abstract}

\section{Introduction}

Approximate divisibility for \CA s was introduced in \cite{BKR}  in the study of noncommutative tori 
{{following the earlier work of M.~R\o rdam (see \cite{Rordam-1991-UHF}  and
\cite{Rordam-1992-UHF2}).}}
It is shown in {{Theorem 1.4}} of  \cite{BKR} that a {{unital separable simple}}  \CA\ $A$ which is approximately divisible
{{has}} strict comparison, and is either purely infinite or has stable rank one.  

Tracial approximation was introduced in the Elliott program of classification for simple \CA s at the end 
of last century  {{(see, for example, \cite{LinTAF}, \cite{LinTRK} and \cite{LinDuke}).}}
The  term of tracially  
approximate divisibility appeared at  the same time as the study of simple \CA s of tracial rank one (see {{Definition}} 5.3 
and the proof of Theorem 5.4 of \cite{Lintrk1}). 
It was shown, for example, that every unital non-elementary simple \CA\, with tracial rank at most one is tracially approximately 
divisible.
A more  general version of tracially approximate divisibility  was  given in  
Definition 5.2 of \cite{FLII}.
Similar variations 
of tracially approximate divisibility
 also 
occurred (see  Definition \ref{TAD} below, 
and also in Definition 10.1 of \cite{EGLN}).
 A concept  with the  same nature was also given in \cite{HO}
which was called  tracially ${\cal Z}$-absorbing
(see also \cite{CLS} and \cite{AGJP17}).
{{As}} a continuation of \cite{FLII} (and also of \cite{FL}),
we {{first}} show that these notions of tracially 
approximate divisibility are {{all}} equivalent for (not necessarily unital) 
non-elementary
separable simple 
\CA s (see Theorem \ref{TTAD==}).


With the same spirit  of \cite{BKR}, we {{also}} show that a separable simple \CA\, which is tracially 
approximately divisible  has strict comparison, and is either purely infinite or has stable rank one (see Theorem \ref{TAD-SC} and Corollary \ref{Ctadsr1}). Moreover, we  
show that if $A$ is a  {{non-elementary}} separable simple 
\CA\ 
  {{which is 
 tracially approximately divisible,  then}} its Cuntz semigroup can
be written
as ${\rm Cu}(A)=(V(A)\setminus \{0\})\sqcup \LAff_+({\wtd{QT}}(A))$
(see Theorem \ref{TLAFF}  and Remark \ref{RCuntz}).

We would like to point out that the Jiang-Su algebra ${\cal Z}$ is not approximately divisible 
since it has no non-zero projection other than the unit
(\cite{JS1999}).
However, by Theorem 5.9 of \cite{FLII}, every simple \CA\ which can be 
{{essentially}}
tracially approximated 
by separable
${\cal Z}$-stable \CA s (see Definition 3.1 of \cite{FLII}  \and Definition \ref{DTrapp}  below) is tracially approximately divisible. 
In particular, simple ${\cal Z}$-stable 
\CA s are tracially approximately divisible. 
In {{Example}} \ref{REx1}, we observe  that there are a whole set of non-nuclear
separable simple \CA s which are tracially approximately divisible.   Since every unital
simple \CA\, which has tracial {{rank}} 
zero is tracially approximately divisible, 
there exist tracially approximately divisible \CA s which are not ${\cal Z}$-stable by \cite{NW}.

Gong, Jiang and Su showed in \cite{GJS00} that a unital simple ${\cal Z}$-stable \CA\ $A,$ i.e., $A\cong A\otimes {\cal Z},$ 
is either purely infinite, or is stably finite, and has weakly unperforated $K_0(A).$
In \cite{Rordam-2004-Jiang-Su-stable}, R\o rdam showed that 
a unital simple ${\cal Z}$-stable \CA\ $A$ 
is either purely infinite, or has 
stable rank one, and  has almost unperforated Cuntz-semigroup. If $A$ is {{a  
separable
simple
 ${\cal Z}$-stable \CA\,}} and contains a non-zero projection $p$, then $pAp$ is also ${\cal Z}$-stable
(\cite[Corollary 3.1]{TomsW07}). 
One then quickly concludes that $A$ has stable rank one if it is finite.
In \cite{Rlz}, L. Robert showed  that every stably 
projectionless  simple \CA\ $A$ which is ${\cal Z}$-stable 
has almost 
stable
rank one.  It left open whether  a 
stably projectionless simple  
${\cal Z}$-stable \CA\ has  
stable rank one (see Question 3.5 of \cite{Rlz}).  As a by-product,  we show that
every finite 
simple ${\cal Z}$-stable \CA\ always has stable rank one (Corollary \ref{Cor-Z-stable-F817}). In particular, we answer  Robert's question affirmatively. 
Some {{applications and examples}} to dynamical  systems {{can be found}} at the end of this paper.
{{We also refer the reader to the recent papers  \cite{T20} and \cite{APRT} for 
further  related results about \CA s of stable rank one.}} 


The paper is organized as follows.
Section  2 is a preliminary.  Section 3 discusses  the 
so-called Cuntz-null sequences.
In Section 4,   we discuss 
several variations of tracial approximate divisibility
and 
we show in Theorem \ref{TTAD==} that they are actually all equivalent. 
In Section 5, we show that a separable simple tracially approximately
divisible \CA\ has strict comparison {{(see Theorem \ref{TAD-SC})}}. Moreover, we also show that the canonical map from {{the purely non-compact elements in}} Cuntz semigroup ${\rm Cu}(A)$ to the set of strictly positive 
lower semi-continuous affine functions  in 
$\LAff_+({\wtd{QT}}(A))$ is an order-isomorphism (see Theorem \ref{TLAFF}). In Section 6,  
we show that a separable simple 
tracially approximately divisible \CA\ 
is either purely infinite, 
or has stable rank one (see Corollary \ref{Ctadsr1}). We end Section 6 by showing that every (non-unital) simple ${\cal Z}$-stable \CA\, is either purely infinite or has stable rank one (see Corollary \ref{Cor-Z-stable-F817}). Finally, we include some examples in Section 7.

\hspace{0.2in}

{\bf Acknowledgements}\hspace{0.1in}
This research
began when 
the first and the third named authors stayed
in the Research Center for Operator Algebras in East China Normal University
in the summer of 2019.
    The first and third named  authors acknowledge the support by the Center
 which is in part supported  by NNSF of China (11531003)  and Shanghai Science and Technology
 Commission (13dz2260400),
 and  Shanghai Key Laboratory of PMMP.
 The second named author was supported by the Internal KU Leuven BOF project C14/19/088.
The third named author was also supported by NSF grants (DMS 1665183 and DMS 1954600).
The first and the third named authors  would like to thank George Elliott for many 
helpful conversations and 
comments.

\section{Preliminary}
In this paper,
the set of all positive integers is denoted by $\N.$ 
{{The set of all compact operators on a separable 
infinite-dimensional Hilbert 
space is denoted by ${\cal K}.$}}

\begin{nota}
Let  $A$ 
be a normed space and ${\cal F}\subset A$ be a subset. Let  $\epsilon>0$.
Let $a,b\in A,$
we  write $a\approx_{\epsilon}b$ if
$\|a-b\|< \epsilon$.
We write $a\in_\ep{\cal F},$ if there is $x\in{\cal F}$ such that
$a\approx_\ep x.$
\end{nota}

\begin{nota}
Let $A$ be a $C^*$-algebra and
let $S\subset A$ be a subset of $A.$
Denote by
${\rm Her}_A(S)$ (or just $\Her(S),$ when $A$ is clear)
the hereditary $C^*$-subalgebra of $A$ generated by $S.$
Denote by $A^{\bf 1}$ the closed unit ball of $A,$
by $A_+$ the set of all positive elements in $A.$
Put $A_+^{\bf 1}:=A_+\cap A^{\bf 1}.$
Denote by $\wtd A$ the minimal unitization of $A.$
When $A$ is unital, denote by $GL(A)$ the group of invertible elements of $A,$
 and denote by  $U(A)$ the unitary group of $A.$
 {{Let  ${\rm Ped}(A)$ denote
the Pedersen ideal of $A$ and ${\rm Ped}(A)_+:= {\rm Ped}(A)\cap A_+$.  Denote by $T(A)$ the tracial state space of $A.$}}
\end{nota}
\begin{df}
Let $A$ and $B$ be \CA s and 
$\phi: A\rightarrow B$ be a  linear map.
The map $\phi$ is positive, if $\phi(A_+)\subset B_+.$  
The map $\phi$ is completely positive contractive, abbreviated as c.p.c.,
if $\|\phi\|\leq 1$ and  
$\phi\otimes \mathrm{id}: A\otimes M_n\rightarrow B\otimes M_n$
are positive for all $n\in\mathbb{N}.$ 
A c.p.c.~map $\phi: A\to B$ is called order zero, if for any $x,y\in A_+,$
$xy=0$ implies $\phi(x)\phi(y)=0.$

In what follows, $\{e_{i,j}\}_{i,j=1}^n$ (or just $\{e_{i,j}\},$ if there is no confusion) is a system of matrix unit for $M_n,$ and, $\iota\in C_0((0,1])$ 
is the identity function on $(0,1],$  i.e., $\iota(t)=t$ for all $t\in (0,1].$
\end{df}
\begin{df}
A   \CA\ $A$ is said to have stable rank one
(\cite{Rie83}), 
if  $\widetilde A=\overline{GL(\widetilde A)},$
i.e.,  $GL(\widetilde A)$ is dense in $\widetilde A.$
A  \CA\ $A$ is said to have 
almost stable rank one (\cite{Rlz}), 
if for any hereditary $C^*$-subalgebra $B\subset A,$
$B\subset  \overline{GL(\widetilde B)}.$
\end{df}

\begin{nota}
Let $\epsilon>0.$ Define a continuous function
$f_{\epsilon}: [0,+\infty)
\rightarrow [0,1]$ by
$$
f_{\epsilon}(t)=
\left\{\begin{array}{ll}
0  &t\in {{[0,\epsilon/2]}},\\
1  &t\in [\epsilon,\infty),\\
\mathrm{linear } &{t\in[\epsilon/2, \epsilon].}
\end{array}\right.
$$

\end{nota}

\begin{df}\label{Dcuntz}
Let $A$ be a \CA\
and  let $M_{\infty}(A)_+:=\bigcup_{n\in\mathbb{N}}M_n(A)_+$.
For $x\in M_n(A),$
we identify $x$ with ${\rm diag}(x,0)\in M_{n+m}(A)$
for all $m\in \N.$
Let $a\in M_n(A)_+$ and $b\in M_m(A)_+$.
We may write  $a\oplus b:=\mathrm{diag}(a,b)\in M_{n+m}(A)_+.$
If $a, b\in M_n(A),$
we write $a \lesssim b$ if there are
$x_i\in M_n(A)$
such that
$\lim_{i\rightarrow\infty}\|a-x_i^*bx_i\|=0$.
We write $a \sim b$ if $a \lesssim b$ and $b \lesssim a$ hold. 
The Cuntz relation $\sim$ is an equivalence relation.
Set $W(A):=M_{\infty}(A)_+/\sim$.
Let $\la a\ra$ denote the equivalence class of $a$. 
We write $\la a\ra\leq \la b\ra $ if $a \lesssim  b$.
$(W(A),\leq)$ is a partially ordered abelian semigroup.
Let ${\rm Cu}(A)=W(A\otimes {\cal K}).$ 
$W(A)$ (resp. ${\rm Cu}(A)$) is called almost unperforated,
if for any $\la a \ra, \la b\ra\in W(A)$ (resp. ${\rm Cu}(A)$),
and for any $k\in\N$,
if $(k+1)\la a\ra \leq k\la b\ra$,
then $\la a \ra \leq \la b\ra$
(see \cite{Rordam-1992-UHF2}).
Denote by  $V(A)$  the subset of those elements in $W(A)$ represented by projections.
\end{df}

\begin{rem}
It is known to experts that $W(A)$ is almost unperforated  is equivalent to say that ${\rm Cu}(A)$ is almost unperforated.
To see this briefly, let $a, b\in (A\otimes {\cal K})_+$ such that $(k+1)\la a\ra \le k\la b\ra.$ 
Let $\{e_{i,j}\}$ be the system of matrix units for ${\cal K}$ and $E_n=\sum_{i=1}^n1_{\td A}\otimes e_{i,i}$ and let 
$\ep>0.$   
Note that $E_naE_n\in M_n(A)_+$ for all $n\in \N.$
Moreover, $a\approx_{\ep/8} E_naE_n$ for some large $n\in \N.$ 
It follows  from Proposition 2.2 of \cite{Rordam-1992-UHF2} 
that $(a-\ep)_+\lesssim (E_naE_n-\ep/4)_+$
and $(E_naE_n-\ep/4)_+\lesssim (a-\ep/8)_+.$
By Proposition 2.4 of of \cite{Rordam-1992-UHF2}
, there exists $\dt>0$ such that 
$(k+1)\la (a-\ep/8)_+\ra \le k\la (b-\dt)_+\ra.$ Repeating R\o rdam's results 
(\cite{Rordam-1992-UHF2}),
one obtains 
that $\la (b-\dt)_+\ra \le  
{{\la E_mbE_m\ra}}$ for some even larger $m.$
Now one has $(k+1)\la (E_naE_n-\ep/4)_+\ra \le k\la E_mbE_m\ra.$ Since $W(A)$ is almost unperforated,
$(a-\ep)_+\lesssim (E_naE_n-\ep/4)_+\lesssim E_mbE_m.$ Then,
$(a-\ep)_+\lesssim E_mbE_m\lesssim b.$ It follows that $a\lesssim b.$
{{Therefore $W(A)$ is almost unperforated implies ${\rm Cu}(A)$
is almost unperforated.}} 

{{To see the converse, just notice that 
$A$ is a hereditary $C^*$-subalgebra of $A\otimes {\cal K},$
$\la a\ra\leq \la b\ra $ in ${\rm Cu}(A)=W(A\otimes {\cal K})$ 
implies  
$\la a\ra\leq \la b\ra $ in $W(A).$}}

\end{rem}

\begin{df}\label{Dqtr}
{{Let $A$ be a \CA. 
A densely  defined 2-quasi-trace  is a 2-quasi-trace defined on ${\rm Ped}(A)$ (see  Definition II.1.1 of \cite{BH}). 
Denote by ${\widetilde{QT}}(A)$ the set of densely defined 2-quasi-traces 
on 
$A\otimes {\cal K}.$  
 In what follows we will identify 
$A$ with $A\otimes e_{1,1},$ whenever it is convenient. 
Let $\tau\in {\widetilde{QT}}(A).$  Note $\tau(a)\not=\infty$ for any $a\in {\rm Ped}(A)_+\setminus \{0\}.$
}}

Note, for each $a\in ({{A}}
\otimes {\cal K})_+$ and $\ep>0,$ $f_\ep(a)\in {\rm Ped}(A\otimes {\cal K})_+.$ 
Define 
\beq
d_\tau(a)=\lim_{\ep\to 0}\tau(f_\ep(a))\rforal \tau\in {\widetilde{QT}}(A).
\eneq

A simple \CA\ 
$A$ 
is said to have (Blackadar's) strict comparison, if, for any $a, b\in (A\otimes {\cal K})_+,$ 
one has 
$a\lesssim b,$ if 
\beq
d_\tau(a)<d_\tau(b)\rforal \tau\in {{{\widetilde{QT}}(A)\setminus \{0\}.}}
\eneq

Let $A$ be a simple \CA. By Proposition 3.2 of \cite{Rordam-2004-Jiang-Su-stable}  (and Proposition 6.2 of \cite{ERS}), if ${\rm Cu}(A)$  is almost unperforated then $A$ has strict comparison (see also Proposition 4.2  of \cite{ERS}).
 
We endow ${\widetilde{QT}}(A)$ 
{{with}} the topology  in which a net 
${{\{}}\tau_i{{\}}}$ 
 converges to $\tau$ if 
${{\{}}\tau_i(a){{\}}}$ 
 converges to $\tau(a)$ for all $a\in 
 {\rm Ped}(A)$ 
 (see also (4.1) on page 985 of \cite{ERS}).
 
 Let $A$ be a simple \CA.   Note that, if $\tau$ is a lower semicontinuous quasitrace on $A\otimes {\cal K}$ defined in \cite{ERS},
and if $\tau(a)<\infty$ {{for}} some $a\in {\rm Ped}(A)_+\setminus \{0\},$ 
then $\tau(c)\in \C$ for all $c\in {\rm Ped}(A).$ In other words, 
$\tau\in {\widetilde{QT}}(A).$ If $\tau(a)=\infty$ 
for some  $a\in {\rm Ped}(A)_+\setminus \{0\},$ then, in this case, 
$\tau(c)=\infty$ for all $c\in {\rm Ped}(A)_+\setminus \{0\}.$

 Choose any {{$e\in {\rm Ped}(A)_+\setminus \{0\}.$}} Put $T_e=\{\tau\in {\wtd{QT}}(A): \tau(e)=1\}.$
 Then $T_e$ is compact (see Theorem 4.4 of \cite{ERS}, note that
$T_e=\{\tau\in QT_2(A): \tau(e)=1\}$  is a closed subset in the compact  space  $QT_2(A),$  which is used in \cite{ERS}).
 Suppose that ${\wtd{QT}}(A)\not=\{0\}.$ 
 Since $A$ is simple, if $\tau\in {\wtd{QT}}(A)\setminus \{0\},$ then $\tau(e)>0.$ 
 Note that, for any $\tau\in {\wtd{QT}}(A)\setminus \{0\},$ $\tau(\cdot)/\tau(e)\in T_e.$
 In other words, $\tau(a)<\tau(b)$ for all $\tau\in {\wtd{QT}}(A)\setminus \{0\}$ if and only if 
 $\tau(a)<\tau(b)$ for all $\tau\in T_e.$

\end{df}

\begin{df}\label{DLAFFQT}
Let $\Aff_+({\wtd{QT}}(A))$ be the set of all continuous affine functions $f$ on ${\wtd{QT}}(A)$
such that $f(\tau)>0$ for all $\tau\in {\wtd{QT}}(A)\setminus \{0\}$ and $f(0)=0,$
and, the zero function.

Let $\LAff_+({\wtd{QT}}(A))$ be the set of  those 
lower semi-continuous affine functions $f:{\wtd{QT}}(A)\to [0, \infty]$ such that 
there exists an increasing  sequence of functions $f_n\in \Aff_+({\wtd{QT}}(A))$
such that 
$f(\tau)=\lim_{n\to\infty}f_n(\tau)$ for all $\tau\in {\wtd{QT}}(A).$ 
%

The canonical map from ${\rm Cu}(A)$ to $\LAff_+({\wtd{QT}}(A))$ is defined as follows:
for each $a\in (A\otimes {\cal K})_+,$ the map $\la a\ra \to \widehat{\la a\ra}$ is defined 
by $\widehat{\la a\ra}(\tau)=d_\tau(a)$ for all $\tau\in {\wtd{QT}}(A).$
\end{df}

\section{Cuntz-null Sequences}

\begin{df}
Let $A$ be a \CA. 
A bounded sequence $\{a_n\}$ in $A$ is said to be {{Cuntz-}}null, if for any $a\in A_+\backslash\{0\}$ and any $\ep>0,$ 
there is $n_0\in\mathbb{N}$ such that $f_\ep(a_n^*a_n)\lesssim a$ for all $n\geq n_0.$ 

Let $l^\infty(A)$ be the \CA\ of bounded sequences of $A.$
{{Recall}} that 
$c_0(A):=\{\{a_n\}\in l^\infty(A): \lim_{n\to\infty}\|a_n\|=0\}$ 
is a (closed) two-sided ideal of $l^\infty(A).$ 
Let $A_\infty:=l^{\infty}(A)/c_0(A).$
Let $\pi_\infty: l^\infty(A)\to 
A_\infty$ be the quotient map. 
We view $A$ as a subalgebra of $l^\infty(A)$ 
via the canonical map $a\mapsto\{a,a,,...\}$ for all $a\in A.$
In what follows, we will identify $a$ with the constant sequence $\{a,a,...,\}$ in 
$l^\infty(A)$ without further warning.
Denote by     
$
N_{cu}(A)
$
(or just $N_{cu}$)
 the set of all Cuntz-null sequences  in $l^\infty(A).$
\end{df}

\begin{rem}
For a free (ultra)filter $\omega$ on $\mathbb{N},$
we may similarly define $\omega$-Cuntz-null sequences as follows: 
the set of those  $\{a_n\}\in l^\infty(A)$ such that, for any $a\in A_+\backslash\{0\}$
and any $\ep>0,$
there is $W\in \omega$ satisfying $f_{\ep}(a_n^*a_n)\lesssim a$ for all 
$n\in W.$
Similar results in this section also works for 
$\omega$-Cuntz-null sequences.
But we will not  explore this further in this paper. 
\end{rem}

 \begin{prop}
 \label{Infinitesimal-ideal}
  Let $A$ be a \CA\, {{and}} $B\subset A$ be a \SCA. 
 Let 
 $$
 I:=\{a\in A: a^*a\lesssim b
\rforal b\in B_+\backslash\{0\}\}.
$$ 
If $B$ has no one-dimensional hereditary \SCA s,
then $I$  is a  closed two-sided ideal of $A.$
 \end{prop}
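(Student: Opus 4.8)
The plan is to verify directly the three defining closure properties of a closed two-sided ideal: that $I$ is a linear subspace, that it absorbs multiplication by $A$ on both sides, and that it is norm-closed. Two of these turn out to be routine consequences of the Cuntz-comparison facts already in use in the excerpt, while closure under addition is the single place where the hypothesis on $B$ is genuinely needed.

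For the multiplicative absorption, fix $a\in I$ and $x\in A$. Using $(ax)^*(ax)\sim (ax)(ax)^*=a(xx^*)a^*\le \|x\|^2aa^*\sim a^*a$, together with the elementary fact that $c\le d$ implies $c\lesssim d$ for positive $c,d$, one gets $(ax)^*(ax)\lesssim a^*a\lesssim b$ for every $b\in B_+\setminus\{0\}$, so $ax\in I$; similarly $(xa)^*(xa)=a^*x^*xa\le \|x\|^2 a^*a\lesssim b$ gives $xa\in I$. Scalar multiplication is immediate since $(\lambda a)^*(\lambda a)=|\lambda|^2a^*a\sim a^*a$. For norm-closedness, suppose $a_n\in I$ with $a_n\to a$; then $a_n^*a_n\to a^*a$, and for a fixed $b$ and $\ep>0$ I would choose $n$ with $\|a^*a-a_n^*a_n\|<\ep$ and invoke R\o rdam's approximation lemma (as used in the Remark following Definition \ref{Dcuntz}) to get $(a^*a-\ep)_+\lesssim a_n^*a_n\lesssim b$; letting $\ep\to 0$ yields $a^*a\lesssim b$, hence $a\in I$. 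Neither of these two steps uses the hypothesis on $B$.

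The heart of the matter is closure under addition, and this is where I expect the real work to lie. Given $a,a'\in I$ and $b\in B_+\setminus\{0\}$, the key is a splitting lemma: under the hypothesis that $B$ has no one-dimensional hereditary \SCA, there exist nonzero orthogonal positive elements $b_1,b_2\in \Her_B(b)$. Granting this, since $b_1,b_2\in B_+\setminus\{0\}$ we have $a^*a\lesssim b_1$ and $a'^*a'\lesssim b_2$, whence
$$a^*a+a'^*a'\lesssim a^*a\oplus a'^*a'\lesssim b_1\oplus b_2\sim b_1+b_2\lesssim b,$$
where the first relation is the standard estimate $c+d\lesssim c\oplus d$, the third uses orthogonality of $b_1,b_2$, and the last uses that $b_1+b_2\in \Her_B(b)_+$ lies in the hereditary subalgebra generated by $b$. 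Finally $(a+a')^*(a+a')\le 2(a^*a+a'^*a')\sim a^*a+a'^*a'\lesssim b$, so $a+a'\in I$.

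It remains to prove the splitting lemma, and here the dichotomy is spectral. If $\sigma(b)\cap(0,\infty)$ contains two distinct points, I would take continuous functions $f,g\ge 0$ with disjoint supports bounded away from $0$ and peaked at those two points; then $f(b),g(b)$ are nonzero, orthogonal, and (vanishing at $0$) lie in $\Her_B(b)$. Otherwise $\sigma(b)\cap(0,\infty)$ is a single point $t$, so $b=tp$ for a projection $p\in B$ and $\Her_B(b)=pBp$. Since $pBp$ is a hereditary \SCA, it is not one-dimensional by hypothesis, so it contains a self-adjoint element that is not a scalar multiple of $p$; its spectrum in the unital algebra $pBp$ then has at least two points, and disjoint-support functional calculus inside $pBp$ again produces the desired orthogonal pair. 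The only subtlety to watch is the elementary fact, which I would cite, that membership in $\Her_B(b)$ implies Cuntz subequivalence to $b$; everything else reduces to the comparison estimates already recorded in the excerpt. That the hypothesis is indispensable is confirmed by the case $B=\C e$ for a projection $e$, where $I$ degenerates to the set of elements of Cuntz rank at most that of $e$, which is not closed under addition.
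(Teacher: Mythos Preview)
Your proof is correct and follows essentially the same route as the paper's: both verify linearity, absorption, and closedness directly, and both reduce closure under addition to the existence of two orthogonal nonzero positive elements in $\Her_B(b)$. The paper simply asserts this last fact (``$\Her_B(b)\ncong\C,$ then there exist $b_1,b_2\in \Her_B(b)_+\backslash\{0\}$ such that $b_1b_2=0$''), whereas you supply a full spectral-dichotomy argument for it; otherwise the arguments match.
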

 
\begin{proof}
First, let us show that  $I$ is a $*$-invariant linear space.
To see this,  let $a\in I.$ If $\lambda\in \C$ 
and $b\in B_+\setminus \{0\},$
then
$|\lambda|^2 a^*a\lesssim a^*a\lesssim b. $  Thus $\lambda a\in I.$ 
{{Since $a^*a\sim aa^*,$}} we also have $a^*\in I.$
Now
let $a_1,a_2\in I$ and  $b\in B_+\backslash\{0\}.$ {{By}} our assumption, 
$\Her_B(b)\ncong\C.$ Then there exist $b_1,b_2\in \Her_B(b)_+\backslash\{0\}$
such that $b_1b_2=0.$
By  {{the}} definition of $I,$
we  have $a_i^*a_i\lesssim b_i,$ ($i=1,2$).
Then $(a_1+a_2)^*(a_1+a_2)\le 2(a_1^*a_1+a_2^*a_2)\lesssim b_1+b_2\lesssim b.$
It follows  $a_1+a_2\in I.$   Consequently,
$I$ is a $*$-invariant linear space. 

{{Next, let us show}} that  {{$I$}} is a two-sided (algebraic) ideal.  Let $a\in I$ and  $x\in A.$ 
For any $b\in {{B_+}}\backslash\{0\},$ we have ${{(ax)^*ax}}=x^*a^*ax
 \lesssim a^*a\lesssim b.$ 
{{Similarly,}}
$(xa)^*xa=a^*x^*xa\lesssim a^*a\lesssim b.$  Thus $ax,xa\in I.$ 
This shows that  $I$ is a two-sided (algebraic) ideal of $A.$

{{It remains to show}} that $I$ is norm closed.  Let ${{\{a_n\}}}\subset I$ and $a\in A$ such that 
$\|a_n-a\|\to 0$ $(n\to\infty).$ 
Let $b\in {{B_+}}\backslash\{0\}.$ For any $\ep>0,$
there is $m\in\N$ such that  $a^*a \approx_{\ep/2}a_m^*a_m.$
Since $a_m\in I,$ we have ${{a_m^*}}a_m\lesssim b.$
{{Thus}} there exist
 $r\in A$ such that 
$a_m^*a_m\approx_{\ep/2}rbr^*.$ Then $a^*a\approx_{\ep}rbr^*.$
Since $\ep$ is arbitrary,
we have $a^*a\lesssim b.$ Since $b\in {{B_+}}\backslash\{0\}$ is arbitrary, 
we have $a\in I.$ Thus $I$ is norm closed as desired.
\end{proof}


\begin{prop}
\label{seq-cu-prpo1}
{{An element}}
$\{a_n\}$ in  $l^\infty(A)$ is a Cuntz-null sequence if and only if 
$\pi_\infty(\{{{a_n^*}}a_n\})\lesssim \pi_\infty(a)$
for all $a\in A_+\backslash\{0\}.$ 
\end{prop}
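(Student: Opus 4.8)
The plan is to prove the two implications separately, writing $b_n=a_n^*a_n$ and viewing $b=\{b_n\}\in l^\infty(A)_+$, and to lean throughout on the elementary comparison lemmas of R\o rdam (\cite{Rordam-1992-UHF2}): that $\|x-y\|<\gamma$ forces $(x-\gamma)_+\lesssim y$, that $a\lesssim b$ iff $(a-\gamma)_+\lesssim b$ for all $\gamma>0$, and that $c\lesssim d$ yields, for each $\gamma>0$, an exact realization $(c-\gamma)_+=r^*dr$. The only structural facts about $A_\infty=l^\infty(A)/c_0(A)$ I will use are that $\pi_\infty$ is a surjective $*$-homomorphism, so a witness of $\lesssim$ in $A_\infty$ is the image of a single bounded sequence, and that a termwise inequality $x_n\le y_n$ passes to $\pi_\infty(\{x_n\})\le\pi_\infty(\{y_n\})$.

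For the easier reverse implication, fix $a\in A_+\setminus\{0\}$ and $\ep>0$. From $\pi_\infty(\{b_n\})\lesssim\pi_\infty(a)$ I obtain, for the gap $\ep/4$, an element $R\in A_\infty$ with $(\pi_\infty(\{b_n\})-\ep/4)_+=R^*\pi_\infty(a)R$; lifting $R$ to a bounded sequence $\{r_n\}$ gives $\lim_n\|(b_n-\ep/4)_+-r_n^*ar_n\|=0$. Hence for large $n$ the norm is $<\ep/4$, so $(b_n-\ep/2)_+=((b_n-\ep/4)_+-\ep/4)_+\lesssim r_n^*ar_n\lesssim a$. Since $f_\ep$ vanishes on $[0,\ep/2]$, the function $t\mapsto f_\ep(t)/(t-\ep/2)_+$ extends to a bounded continuous function, whence $f_\ep(b_n)\lesssim (b_n-\ep/2)_+\lesssim a$ for all large $n$; this is exactly the Cuntz-null condition.

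The forward implication is the substantive one, and the main obstacle is precisely that a witness of $\lesssim$ in $A_\infty$ must come from a sequence of uniformly bounded norm, whereas the pointwise subequivalences $f_\ep(b_n)\lesssim a$ are realized by elements whose norms a priori blow up (realizing a norm-one element from a spectrally small part of $a$ needs large multipliers). My way around this is to compare not with $a$ directly but with a cut-down having a spectral gap. Fixing $a\in A_+\setminus\{0\}$ and $\ep>0$, I would choose $\eta\in(0,\|a\|)$ and set $q:=f_\eta(a)\ne0$; applying the Cuntz-null hypothesis to $q$ gives $n_0$ with $(b_n-\ep)_+\lesssim f_\ep(b_n)\lesssim q$ for $n\ge n_0$. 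For small $\sigma>0$ write $((b_n-\ep)_+-\sigma)_+=s_n^*qs_n$ and put $u_n:=q^{1/2}s_n$: then $u_n^*u_n=((b_n-\ep)_+-\sigma)_+$ has norm $\le\|b\|$, so $\{u_n\}$ is bounded, while $w_n:=u_nu_n^*\in\Her(q)_+$ with $\|w_n\|\le\|b\|$.

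The key point is then a corner domination estimate. Since $q=f_\eta(a)$, the range projection of $q$ is the spectral projection $p=\chi_{(\eta/2,\infty)}(a)$ of $a$, on which $a\ge(\eta/2)p$; thus any $w\in\Her(q)_+$ satisfies $w\le\|w\|p\le(2\|w\|/\eta)a$. In particular $w_n\le Ca$ with the \emph{uniform} constant $C:=2\|b\|/\eta$. Passing to $A_\infty$, where termwise inequalities survive, gives $\pi_\infty(\{w_n\})\le C\pi_\infty(a)$ and hence $\pi_\infty(\{w_n\})\lesssim\pi_\infty(a)$. Setting $U:=\pi_\infty(\{u_n\})$, the Cuntz equivalence $U^*U\sim UU^*$ inside $A_\infty$ gives $(\pi_\infty(\{(b_n-\ep)_+\})-\sigma)_+=U^*U\sim UU^*=\pi_\infty(\{w_n\})\lesssim\pi_\infty(a)$. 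Letting $\sigma\to0$ yields $\pi_\infty(\{(b_n-\ep)_+\})\lesssim\pi_\infty(a)$, and letting $\ep\to0$ yields $\pi_\infty(\{b_n\})\lesssim\pi_\infty(a)$, as required. I expect the corner estimate, together with the decision to apply the hypothesis to $q$ rather than to $a$, to be the crux; everything else is routine manipulation with the functions $f_\ep$ and $(\cdot-\gamma)_+$.
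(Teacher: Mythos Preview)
Your argument is correct, and the core idea coincides with the paper's: in the forward direction the obstacle is indeed the lack of a uniform bound on the realizing elements, and both you and the paper overcome it by applying the Cuntz-null hypothesis to a cut-down $q=f_\eta(a)$ (the paper writes $f_\dt(a)$) rather than to $a$ itself, then absorbing $q^{1/2}$ into the witness to produce a bounded sequence.

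The only difference is in how the final step is packaged. You flip to $w_n=u_nu_n^*\in\Her(q)_+$ and invoke the corner domination $w_n\le (2\|b\|/\eta)\,a$, which buys you $\pi_\infty(\{w_n\})\lesssim\pi_\infty(a)$ via the order relation; this costs an auxiliary parameter $\sigma$ and two limits. The paper instead uses the elementary identity $f_{\dt/2}(a)\,f_\dt(a)^{1/2}=f_\dt(a)^{1/2}$ to rewrite $s_n^*s_n=r_n^*f_\dt(a)r_n$ as $s_n^*f_{\dt/2}(a)s_n$ directly, so that one obtains exactly $f_\ep(\pi_\infty(\{b_n\}))=S^*f_{\dt/2}(\pi_\infty(a))S$ in $A_\infty$ with no flip and no extra parameter. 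Both finishes are short and routine; the paper's is a touch more economical, while yours makes the mechanism (domination inside the hereditary subalgebra generated by $q$) more explicit.
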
 
\begin{proof}
{{To see the ``if" part, let us assume that}} 
$\{a_n\}\in l^\infty(A)
$ {{has the property that,}}
for any $a\in A_+\backslash\{0\},$ 
\beq
\pi_\infty(\{a_n^*a_n\})
=\pi_\infty(\{a_n\})^*\pi_\infty(\{a_n\})
\lesssim
\pi_\infty(a).
\eneq
For any $\ep>0,$ there is $r=\pi_\infty(\{r_n\})\in l^\infty(A)/c_0(A)$
such that $\|\pi_\infty(\{a_n^*a_n\})-r^*\pi_\infty(a)r\|<\ep/2.$
Then there is $n_0\in\mathbb{N}$ such that 
$\|a_n^*a_n-r_n^*ar_n\|<\ep/2$ for all $n\geq n_0.$ 
By \cite[Proposition 2.2]{Rordam-1992-UHF2}, 
we have $f_\ep(a_n^*a_n)\lesssim r_n^*ar_n\lesssim a$ for all $n\geq n_0.$
{{So $\{a_n\}\in N_{cu}(A).$}}

Conversely, we  assume that $\{a_n\}\in N_{cu}(A).$ Let $a\in A_+\backslash\{0\}$ and  $\ep>0.$ 

{{Choose}} $\dt>0$  such that $f_\dt(a)\neq 0.$
Since $\{a_n\}$ is a Cuntz-null sequence, there is $n_1\in\mathbb{N}$ satisfying 
$f_{\ep/2}(a_n^*a_n)\lesssim f_\dt(a)$ for all $n\geq n_1.$
By \cite[Proposition 2.4 (iv)]{Rordam-1992-UHF2}, 
for each $n\geq n_0,$ there is $r_n\in A$ such that $f_{\ep}(a_n^*a_n)=r_n^*f_\dt(a)r_n.$
Note that $\|f_\dt(a)^{1/2}r_n\|=\|r_n^*f_\dt(a)r_n\|^{1/2}=\|f_{\ep}(a_n^*a_n)\|$ 
is bounded for all $n\geq n_1$. 
For $n\in\N,$ let $s_n=f_\dt(a)^{1/2}r_n$ if $n\geq n_1,$ and let $s_n=0$ if $n< n_1.$
Then  $s=\{s_n\}\in l^\infty(A).$ 
{{Moreover,}}
$f_{\ep}(a_n^*a_n)=s_n^*f_{\dt/2}(a)s_n$ for all $n\geq n_1.$ Then
\beq
\|f_{\ep}(\pi_\infty(\{a_n^*a_n\}))-s^*f_{\dt/2}(\pi_\infty(a))s\|
&=&
\|\pi_\infty(f_{\ep}(\{a_n^*a_n\})-s^*f_{\dt/2}(\pi_\infty(a))s\|
\\
&\leq&
\sup_{n\geq n_1}
\|f_{\ep}(a_n^*a_n)-s_n^*f_{\dt/2}(a)s_n\|=0.
\eneq
Therefore 
$
f_{{\ep}}(\pi_\infty(\{a_n^*a_n\}))
\lesssim
f_{\dt/2}(\pi_\infty(a))
\lesssim
\pi_\infty(a).
$
It follows  
$\pi_\infty(\{a_n^*a_n\})\lesssim
\pi_\infty(a)$ as desired. 
\end{proof}
\begin{prop}\label{Ncuideal}
If $A$ is a \CA\ which
has no one-dimensional hereditary {{\SCA s,}} then 
$N_{cu}(A)$ is a closed two-sided ideal of $l^\infty(A)$
and $c_0(A)\subset N_{cu}(A).$
If $A$ is  a non-elementary separable simple \CA, then $c_0(A)\not=N_{cu}(A).$
\end{prop}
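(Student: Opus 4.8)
The plan is to reduce the ideal statement to Propositions \ref{Infinitesimal-ideal} and \ref{seq-cu-prpo1}, and to prove $c_0(A)\neq N_{cu}(A)$ by constructing an explicit Cuntz-null sequence of norm-one elements.

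First I would dispatch the ideal claim. By Proposition \ref{seq-cu-prpo1}, a sequence $\{a_n\}\in l^\infty(A)$ lies in $N_{cu}(A)$ exactly when $\pi_\infty(\{a_n\})^*\pi_\infty(\{a_n\})=\pi_\infty(\{a_n^*a_n\})\lesssim \pi_\infty(a)$ for every $a\in A_+\setminus\{0\}$. Let $B:=\pi_\infty(A)\subset A_\infty$ be the image of the diagonal copy of $A$; since a constant sequence lies in $c_0(A)$ only if it is $0$, the restriction $\pi_\infty|_A$ is a $*$-isomorphism onto $B$, so $B_+\setminus\{0\}=\{\pi_\infty(a):a\in A_+\setminus\{0\}\}$ and $B$ inherits from $A$ the property of having no one-dimensional hereditary \SCA. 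Setting $I:=\{x\in A_\infty: x^*x\lesssim b\ \text{for all}\ b\in B_+\setminus\{0\}\}$, the criterion above reads $N_{cu}(A)=\pi_\infty^{-1}(I)$. Applying Proposition \ref{Infinitesimal-ideal} to $B\subset A_\infty$ shows $I$ is a closed two-sided ideal of $A_\infty$, and pulling back along the surjective $*$-homomorphism $\pi_\infty$ shows $N_{cu}(A)$ is a closed two-sided ideal of $l^\infty(A)$. Since $c_0(A)=\ker\pi_\infty\subset \pi_\infty^{-1}(I)$, we also get $c_0(A)\subset N_{cu}(A)$ (this inclusion is in any case immediate, as $\|a_n\|\to 0$ forces $f_\ep(a_n^*a_n)=0$ eventually).

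Second, for $c_0(A)\neq N_{cu}(A)$ when $A$ is non-elementary separable simple, I note that such $A$ has no one-dimensional hereditary \SCA, so the first part applies. I want a bounded sequence of positive norm-one elements $\{b_n\}$ that is Cuntz-null; then $\{b_n\}\in N_{cu}(A)\setminus c_0(A)$. As $b_n^*b_n=b_n^2$ and $f_\ep(b_n^2)\lesssim b_n^2\sim b_n$, it suffices to arrange $b_n\lesssim a$ for all large $n$, for each $a\in A_+\setminus\{0\}$. Using separability I fix a dense sequence $\{a_k\}$ in the unit sphere of $A_+$ and set $c_k:=(a_k-1/4)_+$, each nonzero. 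By \cite{Rordam-1992-UHF2}, $\|a_k-a\|<1/4$ implies $c_k\lesssim a$; since Cuntz comparison is scale invariant, it then suffices to choose, for each $n$, a positive norm-one $b_n$ with $b_n\lesssim c_k$ for all $k\le n$. Indeed, given $a\in A_+\setminus\{0\}$, rescale to norm one, pick $k$ with $\|a_k-a\|<1/4$, and conclude $b_n\lesssim c_k\lesssim a$ for all $n\ge k$.

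The main obstacle is the lemma that powers this construction: in a simple \CA, any finite family of nonzero positive elements has a common nonzero Cuntz-lower bound. I would prove it by induction from the two-element case. Given nonzero $c_1,c_2\in A_+$, choose $b\in \Her(c_1)_+\setminus\{0\}$ with $c_2^{1/2}bc_2^{1/2}\neq 0$; such $b$ exists because in a simple \CA\ the element $c_2$ cannot annihilate the nonzero hereditary \SCA\ $\Her(c_1)$ (two nonzero hereditary subalgebras of a simple \CA\ are never orthogonal). Then $d:=c_2^{1/2}bc_2^{1/2}$ is nonzero, $d\lesssim c_2$, and $d\sim b^{1/2}c_2 b^{1/2}\lesssim b\lesssim c_1$, so $d$ is a common lower bound of $c_1$ and $c_2$; iterating handles $c_1,\dots,c_n$, and normalizing gives the desired $b_n$. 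This non-annihilation fact, together with verifying that the Cuntz estimates are uniform in $n$, is essentially all the content beyond the two cited propositions.
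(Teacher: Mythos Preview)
Your reduction of the ideal claim to Propositions \ref{Infinitesimal-ideal} and \ref{seq-cu-prpo1} is exactly the paper's argument, and is correct as stated.

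For the second part, the overall strategy---build norm-one $b_n$ below $c_1,\dots,c_n$ in Cuntz order, with the $c_k$'s arising from a dense sequence in the unit sphere of $A_+$---is sound and amounts to a self-contained proof of the lemma the paper quotes from \cite[Lemma 4.3]{FL}. However, your justification of the two-element lower-bound step contains a genuine error: the parenthetical claim that ``two nonzero hereditary subalgebras of a simple \CA\ are never orthogonal'' is false. In $M_2$, the hereditary \SCA s $\Her(e_{11})$ and $\Her(e_{22})$ are nonzero and orthogonal, so one cannot in general choose $b\in\Her(c_1)_+\setminus\{0\}$ with $c_2^{1/2}bc_2^{1/2}\neq 0$; that condition is equivalent to $c_1c_2\neq 0$, which simplicity does not guarantee.

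The fix is standard and close to what you wrote. Simplicity gives $c_1Ac_2\neq 0$ (otherwise $(Ac_1A)(Ac_2A)=0$, forcing $A\cdot A=0$), so pick $x\in A$ with $y:=c_1xc_2\neq 0$. Then $y^*y\in\Her(c_2)$ and $yy^*\in\Her(c_1)$, whence $y^*y\sim yy^*$ is a nonzero common Cuntz-lower bound for $c_1$ and $c_2$; iterating and normalizing gives your $b_n$. With this correction your argument goes through. The paper avoids the issue entirely by citing \cite[Lemma 4.3]{FL} for the existence of $\{s_n\}$; your route is more self-contained once repaired.
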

\begin{proof}
Let $J:=\{x\in l^\infty(A)/c_0(A): {{x^*x\lesssim  a\rforal a\in {{\pi_\infty(A)_+}}}}\backslash\{0\}\}.$ 
Since 
$A$ 
has no one-dimensional hereditary {{\SCA,}} by 
Proposition \ref{Infinitesimal-ideal}, 
$J$ is a norm closed two-sided ideal of 
$l^\infty(A)/c_0(A).$ 
Then, {{by Proposition \ref{seq-cu-prpo1},}} $N_{cu}(A)=\pi_\infty^{-1}(J)$ is a norm closed two-sided ideal of $l^\infty(A).$
Moreover, $c_0(A)=\pi^{-1}_\infty(0)\subset 
\pi_\infty^{-1}(J)=N_{cu}(A).$

Suppose now that $A$ is non-elementary, separable and simple, by \cite[Lemma 4.3]{FL}, 
there exists a sequence 
$\{s_n\}\subset A_+$ 
with $\|s_n\|=1$
such that, for any $a\in {{A}}_+\setminus \{0\},$ there exists $n_0\ge 1$ such that 
$s_n\lesssim a$ for all $n\geq n_0.$  In other words, $\{s_n\}\in N_{cu}(A).$ 
Note that $\{s_n\}\not\in c_0(A).$
\end{proof}

\begin{df}
\label{seq-def-F812-1}
Let $A$ be a \CA\ 
which has no one-dimensional hereditary \SCA s.
Let $A_{cu}:=l^\infty(A)/N_{cu}(A)$ and 
$\pi_{cu}: l^\infty(A)\to A_{cu}$ be the quotient map. 
Let 
$\pi_\infty(A)':=\{x\in A_\infty: xa=ax\mbox{ for all }a\in \pi_\infty(A)\}.$
Let 
$\pi_\infty(A)^\bot:=\{x\in A_\infty: xa=0=ax\mbox{ for all }a\in \pi_\infty(A)\}.$
Let 
$\pi_{cu}(A)':=\{x\in A_{cu}: xa=ax\mbox{ for all }a\in \pi_{cu}(A)\}.$
Let 
$\pi_{cu}(A)^{\bot}:=\{x\in A_{cu}: xa=0=ax\mbox{ for all }a\in \pi_{cu}(A)\}.$
Recall that $c_0(A)\subset N_{cu}(A).$  Denote by
$\pi: A_\infty\to A_{cu},$
$\pi_\infty(\{a_n\})\mapsto \pi_{cu}(\{a_n\})$ 
 the well-defined quotient map. 
Moreover, $\pi:A_\infty\to A_{cu}$ 
induces  canonical {{maps}} ${{\pi'}}: \pi_\infty(A)'\to \pi_{cu}(A)'$
and  $\pi^\bot: \pi_\infty(A)^\bot\to \pi_{cu}(A)^\bot.$
\end{df}

\begin{rem}
It is obvious that $\pi_\infty(A)^\bot$ is a closed two-sided ideal of $\pi_\infty(A)',$
and
 $\pi_{cu}(A)^\bot$ is a closed two-sided ideal of $\pi_{cu}(A)'.$
\end{rem}

\begin{prop}
\label{equiv-ideal-F812-1}
Let $A$ be {{a  non-elementary  separable  simple}} \CA\, with  
${\wtd{QT}}(A)\not=\{0\}.$ 
Let $e\in {\rm Ped}(A)_+\setminus \{0\}$ and 
let $T_e=\{\tau\in {\wtd{QT}}(A): \tau(e)=1\}.$ 
Define 
\beq
&&\hspace{-0.2in}I_{T,0}=\{\{x_n\}\in l^\infty(A): \lim_{n\to\infty}\sup\{\tau(x_n^*x_n):\tau\in T_e\}=0\}\tand\\
&&\hspace{-0.2in}I_{T}:=\{\{x_n\}\in l^\infty(A): \lim_{n\to\infty}\sup\{\tau((x_n^*x_n-\ep)_+):\tau\in T_e\}=0\rforal \ep>0\}.
\eneq
Suppose that $A$ has strict comparison. 
Then 
\beq
\label{inc-F811-1}
\overline{I_{T,0}}\subset I_T=
N_{cu}(A).
\eneq
Moreover, if $A={\rm Ped}(A),$ 
then 
$I_{T,0}=N_{cu}(A)=I_T.$
\end{prop}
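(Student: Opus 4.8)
The plan is to establish the two inclusions of \eqref{inc-F811-1} independently (only the first needs strict comparison), and then, under $A={\rm Ped}(A)$, to prove the single extra inclusion $I_T\subset I_{T,0}$; this closes the cycle $I_T\subset I_{T,0}\subset \overline{I_{T,0}}\subset N_{cu}(A)\subset I_T$ and forces all four sets to coincide.

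\textbf{First inclusion $\overline{I_{T,0}}\subset N_{cu}(A)$.} Since $N_{cu}(A)$ is norm closed (Proposition \ref{Ncuideal}), it suffices to show $I_{T,0}\subset N_{cu}(A)$. Take $\{x_n\}\in I_{T,0}$ and set $M_n:=\sup_{\tau\in T_e}\tau(x_n^*x_n)\to 0$. Fix $a\in A_+\setminus\{0\}$ and $\ep>0$; I want $f_\ep(x_n^*x_n)\lesssim a$ for all large $n$, which by strict comparison reduces to checking $d_\tau(f_\ep(x_n^*x_n))<d_\tau(a)$ for every $\tau\in\wtd{QT}(A)\setminus\{0\}$. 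Using $f_\ep(x_n^*x_n)\sim (x_n^*x_n-\ep/2)_+$ together with the elementary estimate $d_\tau((b-\eta)_+)\le \eta^{-1}\tau(b)$ gives $d_\tau(f_\ep(x_n^*x_n))\le (2/\ep)\,\tau(x_n^*x_n)$. By the normalization in Definition \ref{Dqtr}, $\tau/\tau(e)\in T_e$ for $\tau\neq 0$, so $\tau(x_n^*x_n)\le M_n\,\tau(e)$ and hence $d_\tau(f_\ep(x_n^*x_n))\le (2/\ep)M_n\,\tau(e)$. For the lower bound, note $\tau\mapsto d_\tau(a)$ is lower semicontinuous and (by simplicity) strictly positive on the compact set $T_e$, so $c_a:=\min_{T_e}d_\tau(a)>0$; by homogeneity $d_\tau(a)\ge c_a\,\tau(e)$ for all $\tau\neq0$. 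Once $(2/\ep)M_n<c_a$, which holds for large $n$, we get $d_\tau(f_\ep(x_n^*x_n))<d_\tau(a)$ for all $\tau\neq0$, and strict comparison yields $f_\ep(x_n^*x_n)\lesssim a$. Thus $\{x_n\}\in N_{cu}(A)$.

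\textbf{Second inclusion $N_{cu}(A)\subset I_T$ (no strict comparison needed).} Let $\{x_n\}\in N_{cu}(A)$, put $L:=\sup_n\|x_n\|$, and fix $\ep>0$. From $(t-\ep)_+\le L^2 f_{\ep/2}(t)$ on $[0,L^2]$ and the inequality $\tau(c)\le d_\tau(c)$ for $\|c\|\le1$, one obtains $\tau((x_n^*x_n-\ep)_+)\le L^2\,d_\tau(f_{\ep/2}(x_n^*x_n))$. Given $\gm>0$, I choose $a\in A_+\setminus\{0\}$ with $\sup_{T_e}d_\tau(a)<\gm/L^2$: taking $b=f_\dt(e)$ (nonzero for small $\dt$) we have $d_\tau(b)\le (2/\dt)=:C$ on $T_e$, and inside the non-elementary hereditary subalgebra $\Her(b)$ we may pick $m$ pairwise orthogonal, mutually Cuntz-equivalent nonzero positive elements, for one of which $d_\tau(\cdot)\le C/m$; choosing $m>CL^2/\gm$ gives the desired $a$. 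Cuntz-nullity provides $n_0$ with $f_{\ep/2}(x_n^*x_n)\lesssim a$ for $n\ge n_0$, whence $d_\tau(f_{\ep/2}(x_n^*x_n))\le d_\tau(a)$ and $\sup_{T_e}\tau((x_n^*x_n-\ep)_+)\le L^2\sup_{T_e}d_\tau(a)<\gm$ for $n\ge n_0$. As $\gm$ and $\ep$ are arbitrary, $\{x_n\}\in I_T$.

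\textbf{The equalities when $A={\rm Ped}(A)$.} It remains to show $I_T\subset I_{T,0}$. For $\{x_n\}\in I_T$ with $\|x_n\|\le L$ and $\ep>0$, write $x_n^*x_n=(x_n^*x_n-\ep)_+ +\ep\,g_\ep(x_n^*x_n)$ where $g_\ep(t)=\min(t/\ep,1)\in[0,1]$, so $g_\ep(x_n^*x_n)\in A_+^{\bf1}$. Then $\tau(x_n^*x_n)\le \tau((x_n^*x_n-\ep)_+)+\ep\,\|\tau\|$, where $\|\tau\|:=\sup\{\tau(a):a\in A_+^{\bf1}\}$. The crucial point, and the main obstacle, is the \emph{uniform bound} $M:=\sup_{\tau\in T_e}\|\tau\|<\infty$. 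Granting it, $\sup_{T_e}\tau(x_n^*x_n)\le \sup_{T_e}\tau((x_n^*x_n-\ep)_+)+\ep M$; letting $n\to\infty$ (the first term vanishes since $\{x_n\}\in I_T$) and then $\ep\to0$ gives $\sup_{T_e}\tau(x_n^*x_n)\to0$, i.e.\ $\{x_n\}\in I_{T,0}$, closing the cycle and proving $I_{T,0}=N_{cu}(A)=I_T$. The uniform bound is exactly where $A={\rm Ped}(A)$ enters: each $\tau\in T_e$ is finite on all of ${\rm Ped}(A)=A$ and hence bounded, but uniformity over $T_e$ is \emph{not} automatic from compactness, because $\tau\mapsto\|\tau\|$ is only lower semicontinuous. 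I expect to derive it from algebraic simplicity, dominating the unit ball $A_+^{\bf1}$ by a fixed multiple of the full generator $e$ and combining with $\tau(a)\le d_\tau(a)$ for $\|a\|\le1$ and the continuity of $\tau\mapsto\tau(e)$ on the compact set $T_e$; pinning down this domination cleanly is the technical heart of the ``moreover'' part.
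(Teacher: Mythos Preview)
Your treatment of the two inclusions in \eqref{inc-F811-1} is correct and follows essentially the same strategy as the paper. The only cosmetic differences: for $\overline{I_{T,0}}\subset N_{cu}(A)$ the paper bounds $d_\tau(a)$ from below by the \emph{continuous} function $\tau\mapsto\tau((a-\eta_1)_+)$ (for some small $\eta_1>0$) rather than appealing to lower semicontinuity of $\tau\mapsto d_\tau(a)$; and for $N_{cu}(A)\subset I_T$ the paper simply cites \cite[Lemma~4.3]{FL} to produce an element $a_\eta\in\Her_A(e)_+\setminus\{0\}$ with $d_\tau(a_\eta)<\eta$ on $T_e$, whereas you spell out an equivalent construction by hand.

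The genuine gap is in the ``moreover'' part. You correctly isolate the uniform bound $\sup_{\tau\in T_e}\|\tau\|<\infty$ as the crux and correctly observe that lower semicontinuity of $\tau\mapsto\|\tau\|$ does not give it for free, but you stop short of proving it; the heuristic you offer (``dominating $A_+^{\bf 1}$ by a fixed multiple of $e$'' combined with $\tau(a)\le d_\tau(a)$) is the right instinct but is not yet an argument. The paper fills this gap by invoking \cite[Proposition~5.6]{EGLN}: when $A={\rm Ped}(A)$ is simple, there exist $M(e)>0$ and $N(e)\in\N$ such that every $b\in A_+^{\bf 1}$ admits a decomposition $b=\sum_{i=1}^m y_i^*ey_i$ with $m\le N(e)$ and $\|y_i\|\le M(e)$. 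Encoding the $y_i$ as the first column of a matrix $Z\in M_m(A)$ and using the quasi-tracial identity $\tau(Z^*\bar e_m Z)=\tau(\bar e_m^{1/2}ZZ^*\bar e_m^{1/2})$ with $\bar e_m=\diag(e,\ldots,e)$ yields $\tau(b)\le \|Z\|^2 m\,\tau(e)\le N(e)^3M(e)^2$ for every $\tau\in T_e$, which is exactly the uniform bound you need. With this in hand, your decomposition $x_n^*x_n=(x_n^*x_n-\ep)_++\ep\,g_\ep(x_n^*x_n)$ finishes the argument precisely as you wrote.
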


\begin{proof}
To see
\eqref{inc-F811-1},
we first notice that $I_{T,0}\subset I_T.$
Let
 $\{x_n\}\in  I_T.$
Fix $a\in  A_+^{\bf 1}\setminus \{0\}.$ 
{{Choose $0<\eta_1<\|a\|/2.$
Then $(a-\eta_1)_+\in {\rm Ped}(A)_+\setminus \{0\}.$}}  
Since $T_e$ is compact, 
$A$ is simple  and $\tau\mapsto \tau(a-\eta_1)_+$ is continuous,
we have 
$$\sigma_0:
= \inf\{\tau((a-\eta_1)_+): \tau\in T_e\}>0.$$
Fix any $\ep\in (0,\eta_1).$
Then 
\beq\label{Ptace-e-1}
f_{\ep/2}(x_n^*x_n)\le (\frac{16}{\ep})(x_n^*x_n-\ep/8)_+\rforal n\ge 1.
\eneq
There exists $N\ge 1$ such that, for all $n\ge N,$
$\tau((x_n^*x_n-\ep/8)_+)<\frac{\ep\cdot \sigma_0}{16}$  for all $\tau\in T_e.$
By \eqref{Ptace-e-1},  we have
\beq
d_\tau(f_\ep(x_n^*x_n))\le \tau(f_{\ep/2}(x_n^*x_n))<\sigma_0\le d_\tau(a) \rforal \tau\in T_e\andeqn \rforal n\ge N.
\eneq
Since $A$ has strict comparison (see 
Definition \ref{Dqtr}), for all $n\ge N,$
$f_\ep(x_n^*x_n)\lesssim a.$ 
Thus $\{x_n\}\in N_{cu}(A).$
It follows that   $I_T \subset N_{cu}(A).$ {{Since $N_{cu}(A)$ is closed,   we conclude $\overline{I_{T,0}}\subset N_{cu}(A).$}}

Now let $\{x_n\}\in N_{cu}(A)$ and $\ep>0.$  We may assume that $\|\{x_n\}\|\le 1.$
Fix $\ep_1\in(0,\ep).$
For  any $\eta>0,$ 
since $A$ is simple, infinite-dimensional, and non-elementary, 
$\Her_A(e)$ is also simple,  infinite-dimensional,  and non-elementary. 
It follows from 
\cite[Lemma 4.3]{FL} (see also \cite[Lemma 2.4]{Lin1991}) 
that
there is $a_\eta\in \Her_A(e)_+\setminus \{0\}$
such that $d_\tau(a_\eta)<\eta$ for all $\tau\in T_e.$ 
 There exists an integer $N_1\ge 1$ such that
\beq
(x_n^*x_n-\ep_1)_+\lesssim a_\eta\rforal n\ge N_1.
\eneq
It follows   that, for all $n\ge N_1,$
$$
\sup\{\tau((x_n^*x_n-\ep_1)_+): \tau\in 
T_e\}\le 
 \sup\{d_\tau((x_n^*x_n-\ep_1)_+): \tau\in 
 T_e\}
 \le
 \sup\{d_\tau(a_\eta): \tau\in 
 T_e\}
\leq \eta.
 $$
Therefore $\lim_{n\to\infty}\sup\{\tau((x_n^*x_n-\ep_1)_+):
\tau\in T_e\}=0.$ In other words,
$\{x_n\}\in I_T.$
This proves the first part of the proposition.

If, in addition, we assume  $A={\rm Ped}(A),$ 
then,  by Proposition 5.6 of \cite{EGLN},  
there are $M(e)>0$ and an integer $N(e)\ge 1$ such that, for any 
$b\in A_+^{\bf 1},$ there are $y_1, y_2,...,y_m\in A$ with $\|y_i\|\le M(e)$ and 
$m\le N(e)$ such that
\beq
\sum_{i=1}^m y_i^*ey_i=b.
\eneq
Let $\tau$ be a 2-quasi-trace on $A={\rm Ped}(A),$ 
which extends to a 2-quasi-trace on $M_m(A).$ 
Let $Z:=(z_{i,j})_{m\times m},$ where, {{for each $i,$}}  $z_{i,1}=y_i$ and $z_{i,j}=0$ for $1<j\le m$ and
{{${\bar e}_m=\diag(e,e,...,e).$}}
We  then estimate
\beq
\tau(b)&=&\tau(\sum_{i=1}^m y_i^*ey_i) = 
\tau(Z^*{\bar {e}}_mZ)=\tau(({\bar e}_m)^{1/2}ZZ^*({\bar e}_m)^{1/2})\\
&\le& \|ZZ^*\|\tau({\bar e}_m)=
 \|Z\|^2 \cdot m\tau(e)\le  N(e)^3 M(e)^2  \tau(e).
\eneq
It follows 
that
\beq
\Delta:=\sup\{\|\tau\|: \tau\in T_e 
\}\le N(e)^3M(e)^2.
\eneq
 {{Let $\{x_n\}\in I_{T}$ 
 and let $\ep,\dt>0.$}} 
 Choose $\ep_1:=\ep/N(e)^3M(e)^2.$
 {{Then there is $N \in\mathbb{N}$
 such that}} 
 \beq
 {{\sup\{\tau((x_n^*x_n-\ep_1)_+):\tau\in T_e\}  <  \dt
 \quad \mbox{ for all } n\geq N.}}
 \eneq
{{Let $n\geq N$ and $\tau\in T_e.$}} 
Consider {{the}} 
commutative \SCA\, $B$  of {{$A$}}
generated by $x_n^*x_n.$ 
Then $\tau$ extends {{to}}
a positive linear functional on $\wtd B.$ 
Then,  
\beq
\tau(x_n^*x_n) &=&\tau((x_n^*x_n-{{\ep_1}}
)_+)-\tau((x_n^*x_n-{{\ep_1}}
)_-)+\tau({{\ep_1}}
)\\
&\le & 
{{\tau((x_n^*x_n-\ep_1 
)_+)+\ep_1\|\tau\|
< \dt+\ep.}}
\eneq
This implies 
\beq
\lim_{n\to\infty}\sup\{\tau(x_n^*x_n): \tau\in 
T_e\}=0.
\eneq
It follows that $\{x_n\}\in I_{T,0}.$    
Therefore, we have shown $I_T\subset I_{T,0}.$

In conclusion, when $A={\rm Ped}(A),$ we have
$I_{T,0}=N_{cu}(A)=I_{T}.$
\vspace{-0.1in}
\end{proof}

\begin{rem}\label{R39}
In Proposition \ref{equiv-ideal-F812-1}, if $A={\rm Ped}(A),$ 
for any $e\in A_+\setminus \{0\},$
\beq
I_{T,0}=\{\{x_n\}\in l^\infty(A): \lim_{n\to\infty} \sup\{\|x_n\|_{2,\tau}: \tau\in T_e\}=0\}
= N_{cu}(A),
\eneq
where  
$\|x_n\|_{2, \tau}= \tau(x_n^*x_n)^{1/2}.$  
{{Note that,
$I_{T,0}$  and $I_T$ are  independent of the choice of $e$
in ${\rm Ped}(A)_+\setminus \{0\}.$}}  

However, $N_{cu}{{(A)}}\not=I_{T,0}$ in general.  To see this, 
let $B$ be a unital separable simple 
\CA\  which has a nontrivial 
2-quasi-trace
and $A=B\otimes {\cal K}.$
Let $\{e_{i,j}\}$ be a system of matrix units for ${\cal K}.$ 
Fix $e\in  {\rm Ped}(A)_+\setminus \{0\}.$
Note that $1/2<\sum_{i=n}^{2n} (1/i)<1.$ 
Define $y_n=\sum_{i=n}^{2n} (1/\sqrt{i})(1_B\otimes e_{i,i})\in A,$ $n\in \N.$  Note $y_n^*y_n\in {\rm Ped}(A)_+$
and $\|y_n\|\le 1$. 
For any $\ep>0,$ let $m\in \N$ such that $1/m<\ep.$
Then $(y_n^*y_n-\ep)_+=0$ for all $n>m.$ Therefore $\{y_n\}\in N_{cu}(A).$
Also,
 $\tau(y_n^*y_n)>(1/2)\tau(1_B\otimes e_{1,1})
 $ for all $\tau\in T_e$ 
 (but $\tau(y_n^*y_n)<\tau(1_B\otimes e_{1,1})$
for each $n$).  So $\{y_n\}\not\in I_{T,0}.$
\end{rem}

Recall Definition \ref{seq-def-F812-1}.
We have the following version of central surjectivity
(c.f. \cite[Proposition 4.5(iii) and Proposition 4.6]{KR14}, 
see also
\cite[Theorem 3.1]{MS2014}). 
Note that 
the following proposition is 
related to 
the so-called $\sigma$-ideal 
(\cite[Definition 1.5, Proposition 1.6]{Kir06}). 

\begin{prop}
\label{central-surj}
For a non-elementary separable simple
\CA\ $A,$
the canonical {{maps}} ${{\pi'}}: \pi_\infty(A)' \to \pi_{cu}(A)'$ 
{{and $\pi^\bot: \pi_\infty(A)^\bot \to \pi_{cu}(A)^\bot$ are}}
surjective. 
\end{prop}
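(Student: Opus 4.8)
The plan is to show that the kernel $J_0:=N_{cu}(A)/c_0(A)$ of $\pi\colon A_\infty\to A_{cu}$ is a $\sigma$-ideal of $A_\infty$ in the sense of \cite[Definition 1.5]{Kir06}, and then to read off both surjectivity statements from the algebraic correction used in \cite[Proposition 1.6]{Kir06} (compare \cite[Proposition 4.5(iii)]{KR14} and \cite[Theorem 3.1]{MS2014}). Recall from the proof of Proposition \ref{Ncuideal} that $J_0$ is exactly the infinitesimal ideal $\{x\in A_\infty: x^*x\lesssim \pi_\infty(a)\ \text{for all}\ a\in A_+\setminus\{0\}\}$; in particular a positive $e\in A_\infty$ lies in $J_0$ if and only if $e^2\lesssim\pi_\infty(a)$ for every $a\in A_+\setminus\{0\}$.

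Granting the $\sigma$-ideal property, the surjectivity of $\pi'$ and $\pi^\bot$ follows formally. Given $y\in\pi_{cu}(A)'$, lift it to some $z\in A_\infty$ and let $C$ be the separable \SCA\ generated by $\pi_\infty(A)\cup\{z,z^*\}$. Since $\pi\circ\pi_\infty=\pi_{cu}$ and $y$ commutes with $\pi_{cu}(A)$, each commutator $[z,\pi_\infty(a)]$ lies in $\ker\pi=J_0$, hence in $C\cap J_0$. Choose a positive contraction $e\in J_0\cap C'$ with $ed=d$ for all $d\in C\cap J_0$. Then $x:=z-ez$ satisfies, for $a\in A$, $[x,\pi_\infty(a)]=[z,\pi_\infty(a)]-e[z,\pi_\infty(a)]=0$ (using $e\in C'$ and $e[z,\pi_\infty(a)]=[z,\pi_\infty(a)]$), while $\pi(x)=y$ because $\pi(e)=0$; thus $x\in\pi_\infty(A)'$ lifts $y$. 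For $\pi^\bot$ one argues identically with $x:=z-ez-ze+eze$, now using $z\pi_\infty(a),\pi_\infty(a)z\in C\cap J_0$ to get $x\pi_\infty(a)=0=\pi_\infty(a)x$.

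It remains to produce, for every separable \SCA\ $C\subset A_\infty$, a positive contraction $e\in J_0\cap C'$ with $ed=d$ for all $d\in C\cap J_0$. As $J_0\trianglelefteq A_\infty$, the intersection $C\cap J_0$ is a separable ideal of $C$, so by Arveson's theorem it admits a sequential approximate unit $(u_m)\subset C\cap J_0$ of positive contractions that is quasicentral relative to $C$: $\|[u_m,c]\|\to0$ for $c\in C$ and $\|u_md-d\|\to0$ for $d\in C\cap J_0$. Fix dense sequences $(c_k)$ in $C$ and $(d_k)$ in $C\cap J_0$, and lift each $u_m$ to a sequence $\{u_{m,n}\}_n\in N_{cu}(A)$ of positive contractions. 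I would then build $e=\pi_\infty(\{e_n\})$ by reindexation: at stage $p$ pick $m_p$ so large that $\|[u_{m_p},c_k]\|<1/p$ and $\|u_{m_p}d_k-d_k\|<1/p$ in $A_\infty$ for all $k\le p$, and then pick $n_p$ (strictly increasing) so large that these estimates already hold for the representatives at every coordinate $n\ge n_p$ and, moreover, $f_\ep(u_{m_p,n}^2)\lesssim s_j$ for all $n\ge n_p$, all $j\le p$, and all $\ep\in\{1,1/2,\dots,1/p\}$, where $\{s_n\}$ is the uniformly Cuntz-small sequence from \cite[Lemma 4.3]{FL} used in the proof of Proposition \ref{Ncuideal}. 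Since $\{u_{m_p,n}\}_n\in N_{cu}(A)$, each of these finitely many Cuntz-dominations holds for all large $n$, so such an $n_p$ exists. Setting $e_n:=u_{m_p,n}$ for $n_p\le n<n_{p+1}$ gives a sequence of positive contractions; by construction $[e,c_k]=0$ and $ed_k=d_k$ for every $k$, whence $e\in C'$ and $ed=d$ on all of $C\cap J_0$.

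The main obstacle is the single point of keeping $e$ inside $J_0$: a naive diagonal of sequences in $N_{cu}(A)$ need not be Cuntz-null, because the coordinate $n$ from which $f_\ep(u_{m,n}^2)\lesssim a$ holds depends on $m$, and $m_p\to\infty$ along the diagonal. This is resolved by the cofinality from below of $\{s_n\}$ (for every $a\in A_+\setminus\{0\}$ there is $j$ with $s_j\lesssim a$, by \cite[Lemma 4.3]{FL}), which lets us impose only finitely many domination requirements per stage: at stage $p$ one only dominates $s_1,\dots,s_p$, which is possible because $m_p$ is already fixed. Indeed, given $a$ and $\ep$, choose $j$ with $s_j\lesssim a$ and $p_0\ge\max(j,1/\ep)$; then for $n\ge n_{p_0}$ we have $f_\ep(e_n^2)\lesssim s_j\lesssim a$, so $\{e_n\}\in N_{cu}(A)$ and hence $e\in J_0$. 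This yields the required $\sigma$-ideal element and completes the proof; simplicity and non-elementarity of $A$ enter only through Proposition \ref{Ncuideal}, i.e. through the very existence of $\{s_n\}$ and of the ideal $N_{cu}(A)\supsetneq c_0(A)$.
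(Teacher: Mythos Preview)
Your proof is correct. Both your argument and the paper's rest on the same technical core: a diagonal/reindexation argument in which the crucial difficulty---keeping the diagonal sequence Cuntz-null---is resolved via the cofinally Cuntz-small sequence $\{s_n\}$ from \cite[Lemma 4.3]{FL}, by imposing only finitely many domination requirements $f_{1/k}(\,\cdot\,)\lesssim s_j$ per stage. The difference is organizational rather than mathematical: you first prove that $J_0=N_{cu}(A)/c_0(A)$ is a $\sigma$-ideal of $A_\infty$ in Kirchberg's sense and then deduce both surjectivities formally via the corrections $x=(1-e)z$ and $x=(1-e)z(1-e)$, whereas the paper constructs the lifts directly, applying a quasicentral approximate identity of $N_{cu}(A)$ in $l^\infty(A)$ to the given element and diagonalizing in one pass. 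Your packaging is more modular and yields the reusable $\sigma$-ideal lemma---exactly the framework of \cite[Proposition 1.6]{Kir06} and \cite[Proposition 4.5]{KR14} that the paper itself cites as inspiration; the paper's hands-on version avoids that abstraction but is tailored to this single statement.
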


\begin{proof}
Let $\{d_n\}\subset A_+^{\bf 1}$ be a sequence of positive contractive elements with 
$\|d_n\|=1$ for all $n\in\mathbb{N}$
such that for any $a\in A_+\backslash\{0\},$ there exists $N\in\mathbb{N}$
satisfying $d_n\lesssim a$ for all $n\geq N$
(see \cite[Lemma 4.3]{FL}).
Let $x=\{x_n\}\in l^\infty(A)$ such that 
$\pi_{cu}(x)\in \pi_{cu}(A)'$
and $g=\{g_n\}\in  l^\infty(A)$ such that $\pi_{cu}(g)\in \pi_{cu}(A)^\perp.$
Let $F_1\subset F_2\subset ...\subset A$ be a sequence of finite subsets
with $\overline{\cup_{m}F_m}=A.$
Let $m\in\mathbb{N},$ 
then $\pi_{cu}(x)\pi_{cu}(y)-\pi_{cu}(y)\pi_{cu}(x)=0$ 
{{and $\pi_{cu}(g)\pi_{cu}(y)=0=\pi_{cu}(y)\pi_{cu}(g)$}}
for all $y\in F_m.$
i.e. $xy-yx, gy, yg \in N_{cu}(A).$ 
By the existence of quasi-central approximate identity, 
there exists $e^{(m)}=\{e^{(m)}_n\}_{n=1}^\infty\in N_{cu}(A)_+^{\bf 1}$ such that 
\beq
\|(1-e^{(m)})(xy-yx)(1-e^{(m)})\|<1/3m
\mbox{ and }
\|e^{(m)}y-ye^{(m)}\|<1/3m
\mbox{ for all } y\in F_m, 
\eneq
which implies  
\beq
\label{Censurj-1}
\|(1-e^{(m)})x(1-e^{(m)})y-y(1-e^{(m)})x(1-e^{(m)})\|<1/m
\mbox{ for all }
y\in F_m.
\eneq
Similarly, we also assume that
\beq
\label{Censurj-1+}
&&\hspace{-0.7in}\|(1-e^{(m)})g(1-e^{(m)})y\|<1/m\andeqn \|y(1-e^{(m)})g(1-e^{(m)})\|<1/m
\mbox{ for all }
y\in F_m.
\eneq
Let  
$z^{(m)}:=x-(1-e^{(m)})x(1-e^{(m)})=e^{(m)}x+xe^{(m)}-e^{(m)}xe^{(m)}
\in N_{cu}(A)$
and let $\zeta^{(m)}:=g-(1-e^{(m)})g(1-e^{(m)})=e^{(m)}g+ge^{(m)}-e^{(m)}ge^{(m)}\in N_{cu}(A).$
Write $z^{(m)}=\{z^{(m)}_k\}_{k=1}^\infty$ and 
$\zeta^{(m)}=\{\zeta^{(m)}_k\}{_{k=1}^\infty}.$
{{Then,}} 
for any $m\in\mathbb{N},$ there is $K(m)\in \mathbb{N}$
such that 
\beq
\label{Censurj-5}
f_{1/m}(z^{(m)*}_kz^{(m)}_k)\lesssim d_m\andeqn
f_{1/m}(\zeta^{(m)*}_k\zeta^{(m)}_k)\lesssim d_m  \mbox{ for all }k\geq K(m).
\eneq
We may assume that $K(m+1)>K(m)>0$ 
for all $m\in\mathbb{N}.$ 
For each $k\geq K(1),$ define $m_k:=\max\{m\in\mathbb{N}: K(m)\leq k\}<\infty.$
Note that 
\beq
\label{Censurj-6}
K(m_k)\leq k.
\eneq
For $k<K(1),$ {{define}}  $w_k=0=v_k.$
For $k\geq K(1),$ define 
\beq
w_k:=z^{(m_k)}_k\andeqn v_k:=\zeta^{(m_k)}_{k}.
\eneq
For any $\ep>0$ {{and}} any $a\in A_+\backslash\{0\},$
let $r_1\in \mathbb{N}$ such that $d_m\lesssim a$ for all $m\geq r_1.$
Then, for any $k\geq K( \max\{[1/\ep]+1,r_1\}),$ 
we have $m_k\geq  \max\{1/\ep,r_1\},$ and 
by \eqref{Censurj-6} and \eqref{Censurj-5}, we have 
\beq
&&f_{\ep}(w_k^*w_k)
=
f_{\ep}(z^{(m_k)*}_kz^{(m_k)}_k)
\lesssim
f_{1/m_k}(z^{(m_k)*}_kz^{(m_k)}_k)
\lesssim d_{m_k}\lesssim a\andeqn\\
&&f_{\ep}(v_k^*v_k)
=
f_{\ep}(\zeta^{(m_k)*}_k\zeta^{(m_k)}_k)
\lesssim
f_{1/m_k}(\zeta^{(m_k)*}_k\zeta^{(m_k)}_k)
\lesssim d_{m_k}\lesssim a,
\eneq
which {{shows}} 
$\{w_k\},\, \{v_k\}\in N_{cu}(A).$ 

Now define 
\beq
&&\bar x_k:=x_k-w_k
=x_k-z^{(m_k)}_k
=(1-e^{(m_k)}_k)x_k(1-e^{(m_k)}_k)\andeqn\\
&&\bar g_k:=g_k-v_k
=g_k-\zeta^{(m_k)}_k
=(1-e^{(m_k)}_k)g_k(1-e^{(m_k)}_k).
\eneq 
Since $\{w_k\}, \{v_k\}\in N_{cu}(A),$ we have
\beq
\label{Censurj-2}
\nonumber
&&\pi(\pi_\infty(\{\bar x_k\}))-\pi_{cu}(x)
=\pi_{cu}(\{\bar x_k-x_k\})
=
-\pi_{cu}(\{w_k\})=0\andeqn\\
&&\pi(\pi_\infty(\{\bar g_k\}))-\pi_{cu}(g)
=\pi_{cu}(\{\bar g_k-g_k\})
=
-\pi_{cu}(\{v_k\})=0.
\eneq

Fix $r\in \mathbb{N}$ and $y\in F_{r}.$ Let $\dt>0.$ 
{{Then,  for}}
any $k \geq K(\max\{r,[1/\dt]+1\}),$ 
we have $m_k\geq r,$ 
$y\in F_{m_k},$ and $1/m_k\leq \dt.$   {{In particular,
\beq\label{Censurj-10}
\lim_{k\to\infty}1/m_k=0.
\eneq
}}
By \eqref{Censurj-1} and \eqref{Censurj-1+},  {{for $y\in F_r$ 
{{and}}
$k\geq K(\max\{r,[1/\dt]+1\}),$}} 
\beq
&&\hspace{-0.5in}\|\bar x_ky-y\bar x_k\|
=\|(1-e^{(m_k)}_k)x_k(1-e^{(m_k)}_k)y-y(1-e^{(m_k)}_k)x_k(1-e^{(m_k)}_k)\|
\leq 1/m_k,\\
&&\hspace{-0.5in}\|{{\bar g_k}}
y\|
=\|(1-e^{(m_k)}_k)
{{g_k}}
(1-e^{(m_k)}_k)y\|
\leq 1/m_k,\andeqn\\
&&\hspace{-0.5in}\|y 
{{\bar g_k}}
\|
=\|y(1-e^{(m_k)}_k)
{{g_k}}
(1-e^{(m_k)}_k)\|
\leq 1/m_k.
%
\eneq
{{Combining with \eqref{Censurj-10}, this   implies that, for each $y\in F_r,$ 
\beq
&&\|\pi_\infty(\{\bar x_k\}){{\pi_\infty(y)-\pi_\infty(y)}}\pi_\infty(\{\bar x_k\})\|=0\andeqn
\\
&&\|\pi_\infty(\{\bar g_k\}){{\pi_\infty(y)}}\|=0=\|{{\pi_\infty(y)}}\pi_\infty(\{\bar g_k\})\|.
\eneq}}
Since $\overline{\cup_{r}F_r}=A,$ we have 
\beq
\label{Censurj-3}
\pi_\infty(\{\bar x_k\})\in \pi_\infty(A)'\andeqn \pi_\infty(\{\bar g_k\})\in 
{{\pi_\infty(A)^\bot.}}
\eneq
Then \eqref{Censurj-2} and \eqref{Censurj-3} show that 
$\pi'$ and $\pi^\bot$ are surjective.
\end{proof}


\section{Tracial approximate divisibility}

\begin{df}\label{D-TAD}
Let $A$ be a simple \CA.  We say that $A$ has property (TAD) if the following holds:
 for any $\ep>0,$  
{{any}} 
finite subset ${\cal F}\subset A,$
 any
$s\in A_+\setminus \{0\},$  and any integer $n\ge 1,$
there are  $\theta\in A_+^{\bf 1}$  and  a  \SCA\, $D\otimes M_n\subset A$
such that

(i)  $\theta x\approx_{\ep} x\theta$
for all $x\in {\cal F},$

(ii) ${{(1-\theta)}}x\in_{\ep} D\otimes 1_n$ for all $x\in {\cal F},$
 and

(iii) $\theta\lesssim s.$

\end{df}

\begin{rem}\label{R1}
(1) It is straightforward to show that if $A$ has (TAD), we may further require that

(iv) ${{(1-\theta)x\approx_\ep}}
(1-\theta)^{1/2} x(1-\theta)^{1/2}\in_{\ep} D\otimes 1_n\rforal x\in {\cal F},$
and 

(v) $x\approx_{\ep} \theta^{1/2}x\theta^{1/2}+(1-\theta)^{1/2}x(1-\theta)^{1/2}$ for all $x\in {\cal F}.$

{{(2)}}
It is also 
easy to see that if $A{{\ \neq \mathbb{C}}}$ has property (TAD), then for any integer $n\ge 1 ,$
$M_n(A)$ has the property (TAD) as well.   

{{(3)}} If $A=\overline{\cup_{n=1}^\infty A_n},$
where each $A_n$ has the property (TAD), then $A$ has property (TAD). 
To see this, let $\ep>0,$ ${\cal F}\subset A$ be a finite subset and 
$s\in A_+\setminus \{0\}.$  Choose $n\ge 1$ such 
that, $x{{\ \in\ }}_{\ep/4} A_n$ for all $x\in {\cal F}$ and $a\in (A_n^{\bf 1})_+$
such that $s_1:={{(a-\ep\|a\|/4)_+}}\lesssim s.$  Using the assumption that $A_n$ has property (TAD), 
one concludes that $A$ property (TAD).   As a consequence, 
if $A\not=\C$  and has property (TAD), 
then $A\otimes {\cal K}$ has property (TAD).

\end{rem}

Recall the definition of {{tracial approximate divisibility}} from 
\cite{FLII}: 
\begin{df}\label{Dtrdivisible}
\cite[Definition 5.2]{FLII}
Let $A$ be a  {{simple}} \CA. $A$ is said to  be tracially approximately divisible,
if, for any $\ep>0,$ {{any}} finite subset ${\cal F}\subset A,$
any element $e_F\in A_+^{\bf 1}$ with
$e_Fx\approx_{\ep/4}x\approx_{\ep/4}xe_F$ {{for all $x\in {\cal F},$}}
 any
$s\in A_+\setminus \{0\},$  and any integer $n\ge 1,$
there are  $\theta\in A_+^{\bf 1},$   a \SCA\, $D\otimes M_n\subset A$  and  a c.p.c.~map
$\bt: A\to A$
such that

(1)  $x\approx_\ep x_1+\bt(x)$ {{for all $x\in{\cal F}$}},
where $\|x_1\|\le \|x\|,$ $x_1\in {\rm Her}(\theta),$ 

(2) $\bt(x)\in_\ep D\otimes 1_n$ and $e_F\bt(x)\approx_{\ep} \bt(x)\approx_\ep \bt(x)e_F$  for all $x\in {\cal F},$  and

(3) $\theta \lesssim s. $
\end{df}

By Proposition 5.3 of \cite{FLII},  for a simple \CA\ $A,$ if $A$ has property (TAD), then 
$A$ is tracially approximately divisible.

\hspace{0.2in}

In \cite{HO} (Definition 2.1),  a  unital \CA\ $A$ which is not $\C$  is called tracially ${\cal Z}$-absorbing, 
if  for any finite subset ${\cal F}\subset A,$  {{any}} $\ep>0,$ any 
$s\in A_+\backslash\{0\},$ 
and any integer $n\ge 1,$   there is an order zero 
c.p.c.~map
$\phi: M_n\to A$  such that 
the following condition hold:

(i) $\phi(g)x\approx_{\ep} x\phi(g) \rforal x\in {\cal F}\tand g\in M_n^{\bf 1},$ and 

(ii)  $1-\phi(1_n)\lesssim s.$ 

\ 

We state a non-unital 
variation of this notion (taken from \cite{AGJP17}, see also \cite[Definition 6.6]{FG17} and \cite[Definition 2.1]{CLS}) as follows. 

\begin{df}\label{TAD}
[c.f. \cite{AGJP17}]
Let $A$ be a simple \CA.    We say that 
$A$ has property (TAD-2) if 
the following holds:
for any $\ep>0,$  
any 
finite subset ${\cal F}\subset A,$
any $e_F\in A_+^{\bf 1}$ with $e_Fx\approx_{\ep}x\approx_{\ep}xe_F,$
 any
$s\in A_+\setminus \{0\},$  
any integer $n\ge 1,$
and any finite subset ${\cal G}\subset C_0((0,1])\otimes M_n,$ 
there is a \hm\, $\phi: C_0((0,1])\otimes M_n\to A$ 
such that
(recall that $\iota$ is the identity function  on $(0,1]$)

(1)   $\phi(g)x\approx_{\ep} x\phi(g)$
for all $x\in {\cal F}$ and $g\in {\cal G},$ 
and

(2) $((e_F-e_F^{1/2}\phi({{\iota\otimes 1_n}})e_F^{1/2})-\ep)_+\lesssim s.$
\end{df}

\begin{rem}\label{Rafter44}
When $A$ has a unit $1_A$, 
then (2) in the above definition 
is equivalent to 
$((1_A-\phi(\iota\otimes 1_n))-\ep)_+\lesssim s.$

By choosing $e_F=1_A,$ (2) becomes $((1_A-\phi(\iota\otimes 1_n))-\ep)_+\lesssim s.$ 
Conversely, let $1/2>\ep>0,$ ${\cal F},$ $e_F,$ $s,$ $n$ and ${\cal G}$
be given.
Define $\iota_1\in C_0((0,1])_+$ by $\iota_1(t)=1$ if $t\in [1-\ep,1],$ $\iota_1(0)=0$ and $\iota_1(t)$ is linear on $[0,1-\ep).$ 
Define a \hm\,  $\af: C_0((0,1])\otimes M_n\to C_0((0,1])\otimes M_n)$ by $\af(f(\iota)\otimes e_{i,j})=
f(\iota_1)\otimes e_{i,j}$ for all $f\in C_0((0,1]).$
Put ${\cal G}_1=\{\af(g): g\in {\cal G}\}.$ Suppose that $\phi$ is as in the definition associated with $\ep,$
${\cal F},$ $s,$ $n$ and ${\cal G}_1.$ 
Define $\psi: C_0((0,1])\otimes M_n\to A$ by $\phi\circ \af.$
Then, we have

 (i) $\psi(g) x=\phi\circ \af(g)x\approx_{\ep} x \psi(g)$ for all $g\in {\cal G}.$ 
Moreover, 
(ii)
\beq
1_A-\psi(\iota\otimes 1_n)=1_A-\phi\circ \af(\iota\otimes 1_n)
=\frac{1}{1-\ep}(1_A-\phi(\iota\otimes 1_n)-\ep)_+\lesssim s.
\eneq
In other words, in the case that $A$ is unital, the property (TAD-2) is equivalent to the 
property of tracially ${\cal Z}$-absorbing in the sense of Definition 2.1 of \cite{HO}.

{{Let $\psi': M_n\to A$ be the c.p.c.~order zero map defined by
$\psi'(e_{i,j})=\phi(\iota\otimes e_{i,j})$ ($1\le i, j\le 1$). Since the unit ball of $M_n$ is compact, with a large
${\cal G},$ (1) is equivalent to that $\|[\psi'(g),\, x]\|<\ep$ for all 
$x\in{\cal F}$ and 
$g\in M_n^{\bf 1}.$}}
\end{rem}

The following Lemma \ref{perp-ideal-F814-1}
and Corollary \ref{perp-ideal-F814-2}
are taken from \cite{Kir06}. 
We include proofs here for the reader's convenience.

\begin{lem}
\label{perp-ideal-F814-1}
{{\rm (c.f. \cite[Proposition 1.9 (3)]{Kir06})}}
{{Let $A$ be a \CA, $B\subset A$ {{be}} a subset, 
$B':=\{a\in A:ab=ba\mbox{ for all } b\in B\},$
$B^\bot:=\{a\in A:ab=0=ba\mbox{ for all } b\in B\},$ 
then $B^\bot$ is a closed two-sided of $B'.$ 
Let $\pi:B'\to B'/B^\bot$ be the quotient map. 
Suppose that $e\in B'$ satisfies  $eb=be=b$ for all $b\in B,$  
then $B'/B^\bot$ is unital and  $\pi(e)$ is the unit.}}
\end{lem}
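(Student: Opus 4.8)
The plan is to treat the statement in two stages: first the structural claim that $B^\bot$ is a closed two-sided ideal of $B'$, and then the main claim that $\pi(e)$ is the unit of $B'/B^\bot$. The first stage repeats the observation already recorded in the Remark following Definition \ref{seq-def-F812-1}, so I would only confirm it briefly. One has $B^\bot\subset B'$ because $ab=0=ba$ forces $ab=ba$; moreover $B^\bot$ is a norm-closed linear subspace, since a norm limit of elements annihilating every $b\in B$ still annihilates every such $b$; and for $a\in B^\bot$, $c\in B'$ and $b\in B$ one computes $(ca)b=c(ab)=0$ and $b(ca)=(bc)a=(cb)a=c(ba)=0$ (using $cb=bc$), so $ca\in B^\bot$, the case of $ac$ being symmetric. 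Hence $B^\bot$ is a closed two-sided ideal of $B'$ and the quotient $B'/B^\bot$ is a \CA.

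For the main claim the key reduction is that it suffices to prove
\[
ea-a\in B^\bot\andeqn ae-a\in B^\bot\quad\text{for every }a\in B'.
\]
Granting this, $\pi(e)\pi(a)=\pi(ea)=\pi(a)$ and $\pi(a)\pi(e)=\pi(ae)=\pi(a)$ for all $a\in B'$, which is precisely the assertion that $\pi(e)$ is a two-sided unit of $B'/B^\bot$. To verify $ea-a\in B^\bot$ I would fix $b\in B$ and evaluate the two one-sided products, invoking two different relations where needed: membership $a\in B'$ supplies $ab=ba$, while the hypothesis on $e$ supplies the stronger $eb=be=b$. This yields
\[
(ea-a)b=e(ab)-ab=e(ba)-ab=(eb)a-ab=ba-ab=0
\]
and $b(ea-a)=(be)a-ba=ba-ba=0$. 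The verification of $ae-a\in B^\bot$ is entirely analogous: $(ae-a)b=a(eb)-ab=ab-ab=0$, and $b(ae-a)=(ba)e-ba=(ab)e-ab=a(be)-ab=ab-ab=0$.

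I expect no genuine obstacle: the argument is a short algebraic verification, and the only point demanding care is the bookkeeping of relations, namely consistently distinguishing the weaker identity $ab=ba$ available for a general $a\in B'$ from the stronger $eb=be=b$ available only for the distinguished element $e$, and applying each exactly where it is needed. I would also remark that the proof uses nothing beyond these algebraic identities together with norm-closedness, so no self-adjointness of the set $B$ or of the element $e$ is required.
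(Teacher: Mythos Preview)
Your proof is correct and follows essentially the same approach as the paper's: both verify directly that $ea-a$ and $ae-a$ lie in $B^\bot$ for every $a\in B'$. The only cosmetic difference is that the paper compresses two of the four computations by first observing $ea-a,\,ae-a\in B'$ (since $B'$ is a subalgebra containing $e$ and $a$), so that $(ea-a)b=b(ea-a)$ automatically and only one side needs to be checked; your version computes all four products explicitly, which is equally valid.
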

\begin{proof}
It is straightforward to see that $B^\bot$ is a closed 
two-sided ideal of $B'.$ 
If $x\in B'$, then for any $b\in B,$ 
$(ex-x)b=b(ex-x)=bex-bx=bx-bx=0.$
Also, $(xe-x)b=xeb-xb=xb-xb=0.$ 
Then $xe-x,ex-x\in B^\bot.$ 
Thus $\pi(e)\pi(x)=\pi(x)=\pi(x)\pi(e).$ 
This completes the proof.
\end{proof}

\begin{cor}
\label{perp-ideal-F814-2}
{{{\rm (c.f. \cite[Proposition 1.9 (3)]{Kir06})}}}
Let $A$ be a $\sigma$-unital \CA\ without one-dimensional hereditary 
$C^*$-subalgebras.
Then both $\pi_\infty(A)'/\pi_\infty(A)^\bot$ and 
$\pi_{cu}(A)'/\pi_{cu}(A)^\bot$ are unital.
\end{cor}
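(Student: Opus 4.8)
The plan is to reduce Corollary \ref{perp-ideal-F814-2} directly to Lemma \ref{perp-ideal-F814-1} by exhibiting, in each of the two settings, a suitable element playing the role of the ``$e$'' in the lemma. The lemma says that if $B\subset A$ is any subset, $B'$ its relative commutant, $B^\bot$ the two-sided annihilator (a closed two-sided ideal of $B'$), and if there exists $e\in B'$ with $eb=be=b$ for all $b\in B$, then the quotient $B'/B^\bot$ is unital with unit $\pi(e)$. So the entire task is: produce such an $e$ in the sequence-algebra setting.

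First I would treat the case of $\pi_\infty(A)'/\pi_\infty(A)^\bot$, where the relevant subset is $B=\pi_\infty(A)\subset A_\infty$. Since $A$ is $\sigma$-unital, it has a countable (sequential) approximate identity $\{u_m\}_{m=1}^\infty\subset A_+^{\bf 1}$. The natural candidate is $e:=\pi_\infty(\{u_n\}_{n=1}^\infty)$, the image in $A_\infty=l^\infty(A)/c_0(A)$ of the bounded sequence $\{u_n\}$. The key verification is that $e$ lies in $\pi_\infty(A)'$ and acts as a unit on $\pi_\infty(A)$: for any $a\in A$, one has $u_n a\to a$ and $a u_n\to a$ in norm as $n\to\infty$ because $\{u_n\}$ is an approximate identity, hence $\{u_n a-a\}\in c_0(A)$ and $\{a u_n-a\}\in c_0(A)$, so $e\,\pi_\infty(a)=\pi_\infty(a)=\pi_\infty(a)\,e$. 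In particular $e$ commutes with every element of $B=\pi_\infty(A)$, so $e\in B'$, and $eb=be=b$ for all $b\in B$. Lemma \ref{perp-ideal-F814-1} then gives that $\pi_\infty(A)'/\pi_\infty(A)^\bot$ is unital with unit $\pi(e)$.

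Next I would handle $\pi_{cu}(A)'/\pi_{cu}(A)^\bot$. Here the ambient algebra is $A_{cu}=l^\infty(A)/N_{cu}(A)$ and the subset is $B=\pi_{cu}(A)$. The hypothesis that $A$ has no one-dimensional hereditary $C^*$-subalgebras guarantees (via Proposition \ref{Ncuideal}) that $N_{cu}(A)$ is a genuine closed two-sided ideal of $l^\infty(A)$ containing $c_0(A)$, so that $A_{cu}$ and the quotient map $\pi:A_\infty\to A_{cu}$ of Definition \ref{seq-def-F812-1} are well defined. I would take the very same approximate identity $\{u_n\}$ and set $\bar e:=\pi_{cu}(\{u_n\})=\pi(e)$. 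Since $c_0(A)\subset N_{cu}(A)$, the relations $\{u_n a-a\},\{a u_n-a\}\in c_0(A)\subset N_{cu}(A)$ still hold, so $\bar e\,\pi_{cu}(a)=\pi_{cu}(a)=\pi_{cu}(a)\,\bar e$ for all $a\in A$; thus $\bar e\in\pi_{cu}(A)'$ and acts as a unit on $B=\pi_{cu}(A)$. Applying Lemma \ref{perp-ideal-F814-1} once more (now with $A_{cu}$ in place of the ambient algebra) yields that $\pi_{cu}(A)'/\pi_{cu}(A)^\bot$ is unital with unit the image of $\bar e$.

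The argument is essentially formal, so there is no serious obstacle; the only point requiring a little care is the bookkeeping of \emph{which} ideal one quotients by and making sure the approximate-identity defect sequences land in the correct ideal ($c_0(A)$ in the first case, and $c_0(A)\subset N_{cu}(A)$ in the second). The hypotheses are exactly what is needed for this: $\sigma$-unitality supplies the sequential approximate identity $\{u_n\}$ that defines the unit, and the absence of one-dimensional hereditary subalgebras is what makes $N_{cu}(A)$ an ideal (Proposition \ref{Ncuideal}) so that the second quotient even makes sense. One could also note that the two statements are compatible via the quotient map $\pi:A_\infty\to A_{cu}$, which carries the unit $\pi_\infty$-side to the unit $\pi_{cu}$-side, but this is not needed for the proof.
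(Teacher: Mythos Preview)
Your proof is correct and follows essentially the same approach as the paper: both reduce to Lemma \ref{perp-ideal-F814-1} by using a sequential approximate identity of $A$ to produce an element of $l^\infty(A)$ whose image in $A_\infty$ (resp.\ $A_{cu}$) acts as a unit on $\pi_\infty(A)$ (resp.\ $\pi_{cu}(A)$). The paper simply makes the specific choice $\{e^{1/n}\}$ for a strictly positive element $e\in A_+^{\bf 1}$, whereas you take an arbitrary countable approximate identity $\{u_n\}$; these are equivalent.
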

\begin{proof}
Let $e\in A_+^{\bf 1}$ be a strictly positive element. 
Set $e_0=\{e^{1/n}\}\in l^\infty(A).$ 
Then $\pi_\infty(e_0)$ (resp. $\pi_{cu}(e_0)$) 
is a local unit of $\pi_\infty(A)$ (resp. $\pi_{cu}(A)$). 
Hence, by
Lemma \ref{perp-ideal-F814-1}, the lemma holds.
\end{proof}

\begin{df}
Let $A$ be a 
$\sigma$-unital non-elementary 
simple \CA. 
We say that $A$ has property (TAD-3), 
if, 
for any $n\in \N,$   there is  a unital \hm\ 
$\phi: M_n\to  \pi_{cu}({{A}})'/\pi_{cu}({{A}})^\bot.$
\end{df}

\begin{lem} \label{TtadtoTAD2}
Let $A$ be a non-elementary  
separable 
simple 
\CA.

(1) If $A$ 
 is 
 tracially approximately divisible, then 
$A$ has 
property (TAD-3).

(2) If  $A$  has property (TAD-3), then 
$A$ has the property (TAD-2).

\end{lem}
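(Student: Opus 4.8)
The plan is to realise both implications inside the central sequence algebra $A_{cu}=l^\infty(A)/N_{cu}(A)$, using that $\pi_{cu}(A)'/\pi_{cu}(A)^\bot$ is unital (Corollary \ref{perp-ideal-F814-2}) and the structure theorem for order zero maps: a unital c.p.c.~order zero map out of $M_n$ is automatically a \hm\ (its supporting positive element being the unit). The bridge between (TAD-2) and (TAD-3) is the identification of \hm s $C_0((0,1])\otimes M_n\to B$ with c.p.c.~order zero maps $M_n\to B$ already noted after Definition \ref{TAD}.

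For (1), I fix matrix units $\{e_{ij}\}$ of $M_n$ and choose an increasing sequence of finite sets ${\cal F}_k$ with dense union, $\ep_k\to 0$, approximate units $e_k$ with $e_kx\approx x\approx xe_k$ on ${\cal F}_k$, and a sequence $\{s_k\}\subset A_+$ with $\|s_k\|=1$ as in \cite[Lemma 4.3]{FL} so that $s_k\lesssim a$ eventually for every $a\in A_+\setminus\{0\}$ (available since $A$ is non-elementary, separable and simple, as in Proposition \ref{Ncuideal}). Applying tracial approximate divisibility with data $(\ep_k,{\cal F}_k,e_k,s_k,n)$ yields $\theta_k\in A_+^{\bf 1}$, a \SCA\ $D_k\otimes M_n\subset A$ and a c.p.c.~map $\beta_k\colon A\to A$. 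By (3), $\theta_k\lesssim s_k$, so $\{\theta_k\}$ and, for each fixed $x$, the remainders $x_1\in\Her(\theta_k)$ of (1) are Cuntz-null; combined with (1) this gives $\pi_{cu}(x)=\pi_{cu}(\{\beta_k(x)\})$ for every $x\in A$ (first on $\cup_k{\cal F}_k$, then by density). Next I take $f_k\in D_k$ an approximate unit acting, to within $\delta_k\to 0$, as a two-sided unit on the finitely many $d^{(k)}_x\in D_k$ with $\beta_k(x)\approx_{\ep_k}d^{(k)}_x\otimes 1_n$ ($x\in{\cal F}_k$), which exist by (2). The genuine c.p.c.~order zero maps $\psi_k\colon M_n\to A$, $\psi_k(e_{ij})=f_k\otimes e_{ij}$, assemble into a c.p.c.~order zero map $\psi\colon M_n\to A_{cu}$, $\psi(e_{ij})=\pi_{cu}(\{f_k\otimes e_{ij}\})$. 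Since $f_kd^{(k)}_x\approx d^{(k)}_x\approx d^{(k)}_xf_k$, one gets $[\psi(e_{ij}),\pi_{cu}(x)]=0$, so $\psi$ lands in $\pi_{cu}(A)'$, and $\psi(1_n)=\pi_{cu}(\{f_k\otimes 1_n\})$ acts as a two-sided unit on $\pi_{cu}(A)$. Passing to $\pi_{cu}(A)'/\pi_{cu}(A)^\bot$ produces a \emph{unital} c.p.c.~order zero map (Lemma \ref{perp-ideal-F814-1}), hence a unital \hm\ $M_n\to\pi_{cu}(A)'/\pi_{cu}(A)^\bot$; this is (TAD-3).

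For (2), I begin with the unital \hm\ $\phi_0\colon M_n\to\pi_{cu}(A)'/\pi_{cu}(A)^\bot$ from (TAD-3) and form the \hm\ $\Phi_0:=\mathrm{ev}_1\otimes\phi_0\colon C_0((0,1])\otimes M_n\to\pi_{cu}(A)'/\pi_{cu}(A)^\bot$, noting $\Phi_0(\iota\otimes 1_n)=1$. As the cone $C_0((0,1])\otimes M_n$ is projective, I lift $\Phi_0$ successively through the surjections $\pi_{cu}(A)'\to\pi_{cu}(A)'/\pi_{cu}(A)^\bot$, then $\pi'\colon\pi_\infty(A)'\to\pi_{cu}(A)'$ (surjective by Proposition \ref{central-surj}), then $\pi_\infty\colon l^\infty(A)\to A_\infty$, obtaining a \hm\ $\tilde\Phi\colon C_0((0,1])\otimes M_n\to l^\infty(A)$, i.e.\ a sequence of \hm s $\phi^{(k)}\colon C_0((0,1])\otimes M_n\to A$. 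Since the intermediate lift lies in $\pi_\infty(A)'$, we have $\|[\phi^{(k)}(g),x]\|\to 0$ for all $x$ and $g$, which gives condition (1) for large $k$. Since $\Phi_0(\iota\otimes 1_n)=1$, writing $h_k:=\phi^{(k)}(\iota\otimes 1_n)$ (with $0\le h_k\le 1$) the class of $\pi_{cu}(\{h_k\})$ is the unit, so $\{h_ke_F^{1/2}-e_F^{1/2}\}\in N_{cu}(A)$; because $N_{cu}(A)$ is an ideal, the positive defects $z_k:=e_F-e_F^{1/2}h_ke_F^{1/2}=e_F^{1/2}(e_F^{1/2}-h_ke_F^{1/2})$ then form a Cuntz-null sequence. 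Hence for the given $s$ and $\ep$ there is $k$ with $(z_k-\ep)_+\lesssim s$ and with the commutators below $\ep$ on ${\cal F}$ and ${\cal G}$; setting $\phi:=\phi^{(k)}$ verifies (TAD-2).

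The hard part will be the bookkeeping in (1): choosing a single sequence $f_k$ of approximate units that simultaneously makes $\{f_k\otimes e_{ij}\}$ into asymptotic matrix units, asymptotically commutes with the $D_k$-parts $d^{(k)}_x$ of $\beta_k(x)$, and makes $\sum_i f_k\otimes e_{ii}$ a two-sided unit on the image of $A$, so that the limit order zero map is genuinely unital and the order-zero-to-\hm\ upgrade applies. In (2) the delicate point is extracting the Cuntz-null estimate for $z_k$ from mere unitality of $\phi_0$ modulo $\pi_{cu}(A)^\bot$; here the bound $0\le h_k\le 1$ together with the ideal property of $N_{cu}(A)$ is exactly what converts $\{h_ke_F^{1/2}-e_F^{1/2}\}\in N_{cu}(A)$ into Cuntz-smallness of the two-sided defect.
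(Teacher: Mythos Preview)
Your proposal is correct and follows essentially the same route as the paper. In part (1) you package the construction via c.p.c.\ order zero maps and the observation that a unital such map out of $M_n$ is automatically a $*$-homomorphism, whereas the paper writes down $e_{ij}\mapsto\pi_{cu,J}(\{d_n\otimes e_{ij}\})$ directly; these amount to the same thing. In part (2) both arguments lift the unital map through projectivity of the cone and the surjectivity of $\pi'\colon\pi_\infty(A)'\to\pi_{cu}(A)'$ from Proposition \ref{central-surj}; your final step---deducing $\{e_F-e_F^{1/2}h_ke_F^{1/2}\}\in N_{cu}(A)$ from $\pi_{cu}(\{h_k\})\pi_{cu}(e_F^{1/2})=\pi_{cu}(e_F^{1/2})$ by left-multiplying with $e_F^{1/2}$ and using that $N_{cu}(A)$ is an ideal---is a bit more direct than the paper's detour through $\eta_n=e_F^{1/2}-e_F^{1/4}e_ne_F^{1/4}$ and the surjectivity of $\pi^\bot$, but the substance is identical.
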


\begin{proof}
{\em Proof of (1).}
Fix $N\in\mathbb{N}.$
By \cite[Lemma 4.3]{FL}, 
there exists a sequence 
$\{s_n\}\subset A_+^{\bf 1}\backslash  
\{0\}$ such that, for any $a\in {{A}}_+\setminus \{0\},$ there exists $n_0\ge 1$ such that 
$s_n\lesssim a$ for all $n\geq n_0.$
Choose $0<\ep_n<\ep$ such that $\sum_{n=1}^\infty \ep_n<1$ and 
an increasing sequence of finite subsets ${\cal F}_n\subset {{A}}^{\bf 1}$ such that 
and $\cup_{n=1}^\infty {\cal F}_n$ is dense in ${{A^{\bf 1}}}.$ 

If $A$ is tracially approximately divisible, then
there exist a sequence of \SCA s $D_n\otimes M_N\subset A,$
a sequence of c.p.c.~maps $\bt_n: A\to A,$ 
and
a sequence of positive elements $\theta_n\in A^{\bf 1}$
satisfying the following:
for any $x\in {\cal F}_n,$ 
there is 
$x^{(n)}\in {\rm Her}(\theta_n)^{\bf 1}$ such that 

(i) $x\approx_{\ep_n} x^{(n)}+\bt_n(x),$

(ii) $\bt_n(x)\in_{\ep_n}  D_n\otimes 1_N,$ and 

(iii)  $\theta_n\lesssim s_n.$

In particular, 
we have that $\{\theta_n\}\in N_{cu}(A).$ 

For convenience, we 
put $J=\pi_{cu}(A)^\perp.$ {{By Lemma \ref{perp-ideal-F814-1},
$J$ is an ideal of }$\pi_{cu}(A)'.$}
Denote by $\pi_{cu,J}: \pi_{cu}(A)'\to \pi_{cu}(A)'/J$ 
the quotient map.

For each $x\in {\cal F}_k,$   we have 
$\{x^{(n)}\}\in N_{cu}(A).$ 
It follows from (i)
that, for each $k$ and each $x\in {\cal F}_k,$
\beq\label{TAD2-c-1}
\pi_{cu}(\{\bt_n(x)\})=\pi_{cu}(x).
\eneq
Choose $d_n\in {D_n^{\bf 1}}_+$ such that, for any $x\in {\cal F}_k,$ 
\beq
&&
(d_n\otimes e_{i,j})\bt_n(x)
\approx_{2\ep_n}
\bt_n(x)(d_n\otimes e_{i,j})
\andeqn\\
&&
(d_n\otimes 1_N)\bt_n(x)
\approx_{2\ep_n} 
\bt_n(x)
\approx_{2\ep_n} 
\bt_n(x)(d_n\otimes 1_N).
\eneq
Since $\cup_{n=1}^\infty {\cal F}_n$ is dense in ${{A^{\bf 1}}},$ combining this with \eqref{TAD2-c-1},  we obtain that 
\beq
&&\pi_{cu}(\{d_n\otimes e_{i,j}\})\pi_{cu}(x)=\pi_{cu}(x) \pi_{cu}(\{d_n\otimes e_{i,j}\})\andeqn\\\label{TAD2-c-2}
&&\hspace{-0.3in}
\pi_{cu}(\{d_n\otimes 1_N\})\pi_{cu}(x)
=\pi_{cu}(x)=\pi_{cu}(x) \pi_{cu}(\{d_n\otimes 1_N\})
\rforal x\in {{A.}} 
\label{TAD2-F812-1}
\eneq
In other words, $\{
\pi_{cu}(\{d_n\otimes e_{i,j}\}):1\leq i,j\leq N\}\subset \pi_{cu}(A)',$
and $\pi_{cu}(\{d_n\otimes 1_N\})$ 
is a local unit of $\pi_{cu}(A).$

{{By \eqref{TAD2-F812-1} 
and Lemma \ref{perp-ideal-F814-1}, $\pi_{cu,J}(\{d_n\otimes 1_N\})$ is the unit 
of $\pi_{cu}(A)'/J.$ 
Then the map $\phi: M_N\to   \pi_{cu}(A)'/J$  defined 
by $\phi(e_{i,j})=\pi_{cu,J}\circ\pi_{cu}(\{d_n\otimes e_{i,j}\}),$ $1\le i,j\le N,$ 
is a}}
 unital 
 homomorphism. 
Since, for every $N\in\mathbb{N},$ there is a unital embedding $\phi: M_N\to
\pi_{cu}(A)'/\pi_{cu}(A)^\bot,$
$A$ has property (TAD-3).


{{\em Proof of (2).}}
Let $\ep>0$ and any finite subset ${\cal F} \subset A,$  any $s \in A_+\backslash \{0\},$  and any integer $N\ge 1$ be given.
Choose $e_F\in A_+^{\bf 1}$ such that 
\beq
e_Fx\approx_{\ep/32} xe_F\approx_{\ep/32} x\rforal x\in {\cal F}.
\eneq
Recall that $J=\pi_{cu}(A)^\perp.$ 
Since $A$ has (TAD-3),
there exists a
{{unital embedding}} 
$\phi: M_{{N}}\to \pi_{cu}(A)'/J$
such that
\beq\label{TAD-e-1}
\phi(1_{{N}})\pi_{cu,J}(a)=\pi_{cu,J}(a)=\pi_{cu, J}(a)\phi(1_N)\rforal a\in A.
\eneq
Define a \hm\, $\Phi: C_0((0,1])\otimes M_N
\to  \pi_{cu}(A)'/J$ 
by $\Phi(f\otimes e_{i,j})=\phi(f(1)\otimes e_{i,j})$
for all $f\in C_0((0,1]),$ $1\le i,j\le N.$

By {{Proposition \ref{central-surj},}} 
${{\pi}}(\pi_\infty(A)')=\pi_{cu}(A)'.$  Since 
$C_0((0,1])\otimes M_N$ is projective, 
there is a \hm\, $\Psi: C_0((0,1])\otimes M_N
\to \pi_\infty(A)'$ 
such that ${{\pi}}\circ \Psi=\Phi.$
We may write $\Psi=\{\psi_n\},$ 
where $\psi_n: C_0((0,1])\otimes M_N
\to A$ is a \hm. 
Thus, for any finite subset ${\cal G}\subset C_0((0,1])\otimes M_N,$ 
there exists $n_1\ge 1$ such that
\beq\label{TAD-e-0}
\psi_n(g)x\approx_{\ep} x\psi_n(g)\rforal x\in {\cal F,}
\mbox{ all }g\in{\cal G}
\mbox{ and all } n\geq n_1.
\eneq
Put $e_n:=\psi_n(\iota\otimes 1_N)$ 
and $\eta_n:=e_F^{1/2}-e_F^{1/4}e_ne_F^{1/4}.$
By \eqref{TAD-e-1},
\beq
\pi_{cu}(\{\eta_n\})=\pi_{cu}(\{e_F^{1/2}\}-\{e_F^{1/4}e_ne_F^{1/4}\})\in J.
\eneq
Thus, by Proposition \ref{central-surj}, 
there are $y=\{y_n\}\in N_{cu}(A)$ and 
$z=\{z_n\}\in l^\infty(A)$ 
with $\pi_\infty(z)\in \pi_\infty(A)^\bot,$
such that $\{\eta_n\}=y+z.$ 
So $\pi_\infty(aza)=0$  in $l^\infty(A)/c_0(A)$ for all $a\in A.$ 
It follows 
\beq
\pi_\infty(e_F-e_F^{1/2}
\{\psi_n(\iota\otimes 1_N)
\} e_F^{1/2})
=
\pi_\infty(e_F^{1/4}\{\eta_n\}e_F^{1/4})
=\pi_\infty(e_F^{1/4}ye_F^{1/4})
\in
\pi_\infty(N_{cu}(A)).
\eneq
In other words,
\beq
e_F-e_F^{1/2}\{\psi_n(\iota\otimes 1_N 
)\} e_F^{1/2}\in N_{cu}(A).
\eneq
It follows that there exists $n_{2}\ge {{n_1}}$ such that, for all $n\ge n_{{2}},$
\beq
(e_F-e_F^{1/2}\psi_n(\iota\otimes 1_N 
)e_F^{1/2}-\ep)_+\lesssim s.
\eneq
The lemma then  follows (see also \eqref{TAD-e-0}).
\end{proof}

\begin{prop}
\label{PTAD2toTAD}
Let $A$ be a 
simple \CA\,
 which has the property  (TAD-2).
Then $A$ has the property (TAD).
\end{prop}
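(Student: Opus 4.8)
The plan is to feed the data of (TAD) into (TAD-2), harvest the resulting homomorphism on the cone $C_0((0,1])\otimes M_n$, and read off both the matrix subalgebra and the ``small'' element $\theta$ from it. Concretely, given $\ep>0$, a finite ${\cal F}\subset A$, an $s\in A_+\setminus\{0\}$ and $n\ge 1$, I first normalize so that ${\cal F}\subset A^{\bf 1}$ and fix a positive contraction $e_F$ that almost commutes with ${\cal F}$ and satisfies $e_Fx\approx x\approx xe_F$ to very high precision (a quasicentral approximate unit element for ${\cal F}$). I then apply (TAD-2) with this $e_F$, a tolerance $\ep'\ll\ep$, the same $n$ and $s$, and with ${\cal G}\subset C_0((0,1])\otimes M_n$ a finite set containing $\iota\otimes 1_n$ and all $\iota^{1/2}\otimes e_{ij}$. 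This yields a homomorphism $\phi\colon C_0((0,1])\otimes M_n\to A$; write $h:=\phi(\iota\otimes 1_n)$, $\psi:=\phi(\iota\otimes\,\cdot\,)\colon M_n\to A$ (a c.p.c.\ order zero map with $\psi(1_n)=h$), and $u_{ij}:=\phi(\iota^{1/2}\otimes e_{ij})$. By (TAD-2)(1) the elements $h$ and $u_{ij}$ almost commute with every $x\in{\cal F}$, and by (TAD-2)(2) the ``defect'' $\bigl((e_F-e_F^{1/2}he_F^{1/2})-\ep'\bigr)_+$ is $\lesssim s$.

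Next I produce the subalgebra $D\otimes M_n$. Let $C:=\overline{\psi(M_n)A\psi(M_n)}$ be the hereditary subalgebra generated by the order zero image. The support partial isometries $v_{ij}$ of $\psi$ (so that $\psi(e_{ij})=hv_{ij}$ with $h$ commuting with the $v_{ij}$) satisfy $v_{ij}\psi(e_{kl})=\delta_{jk}\psi(e_{il})$, hence define an honest system of matrix units $v_{ij}\in M(C)$ with $\sum_i v_{ii}=1_{M(C)}$. Consequently $C\cong M_n(D)$ with $D:=v_{11}Cv_{11}=\overline{\psi(e_{11})A\psi(e_{11})}$, via $c\mapsto(v_{1i}cv_{j1})_{ij}$, the inverse sending $(d_{ij})$ to $\sum_{ij}v_{i1}d_{ij}v_{1j}$. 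I take this copy of $M_n(D)=D\otimes M_n$ as the subalgebra required by (TAD), with diagonal $D\otimes 1_n=\{\sum_i v_{i1}dv_{1i}:d\in D\}$. Note that $D$ is a hereditary subalgebra of $A$ and need not be unital, so the construction also makes sense when $A$ is projectionless (e.g.\ when $A$ is ${\cal Z}$-stable).

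Now set $\theta:=\bigl((e_F-e_F^{1/2}he_F^{1/2})-\ep'\bigr)_+\in A_+^{\bf 1}$. Condition (iii), $\theta\lesssim s$, is exactly (TAD-2)(2). Condition (i) holds because $\theta$ is a continuous function of $e_F-e_F^{1/2}he_F^{1/2}$, which almost commutes with ${\cal F}$ (as $e_F$ and $h$ do). For (ii), I estimate on the one hand $(1-\theta)x\approx_{\ep'} x-(e_F-e_F^{1/2}he_F^{1/2})x\approx e_F^{1/2}he_F^{1/2}x\approx hx$ (using $e_Fx\approx x$ and that $e_F,h$ almost commute), and on the other hand I average $x$ over the matrix units: $\sum_i u_{i1}xu_{1i}\approx\sum_i u_{i1}u_{1i}x=\sum_i hv_{ii}x=hx$, using that each $u_{1i}$ almost commutes with $x$. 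Finally, a direct computation of the matrix of $\sum_i u_{i1}xu_{1i}$ under $C\cong M_n(D)$ yields the constant diagonal with entry $h^{1/2}v_{11}xv_{11}h^{1/2}\in D$ and zero off-diagonal, so $\sum_i u_{i1}xu_{1i}\in D\otimes 1_n$. Combining, $(1-\theta)x\in_\ep D\otimes 1_n$, which is (ii) (and, with the same estimates, the stronger form (iv)--(v) of Remark \ref{R1}).

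The crux is the order zero ``weight'' bookkeeping. The naive choice of $\psi(e_{ij})$ as matrix units reconstructs $\sum_i\psi(e_{i1})x\psi(e_{1i})\approx h^2x$, which is off by one power of $h$ from the big part $(1-\theta)x\approx hx$; inserting the half-power weight, i.e.\ averaging with $u_{ij}=\phi(\iota^{1/2}\otimes e_{ij})$, is exactly what matches the two sides. The remaining work is purely quantitative: one must choose $\ep'$, the quasicentrality of $e_F$, and ${\cal G}$ fine enough that the finitely many commutator estimates involving $[u_{ij},x]$, $[h,x]$, $[e_F,x]$, together with the capping error in the definition of $\theta$ and the $n$ terms in the averaging, accumulate to less than $\ep$; and one must verify, through the multiplier matrix units $v_{ij}$, that the averaged element genuinely lands in the honest diagonal $D\otimes 1_n$ rather than merely near the (too small) commutative subalgebra $C^*(h)$.
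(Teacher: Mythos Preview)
Your proof is correct and follows essentially the same approach as the paper: the paper likewise sets $\theta=(e_F-e_F^{1/2}\phi(\iota\otimes 1_n)e_F^{1/2}-\ep')_+$, takes $D=\overline{\phi(\iota\otimes e_{11})A\phi(\iota\otimes e_{11})}$, and verifies (ii) via the same averaging $\sum_i\phi(\iota^{1/2}\otimes e_{i,1})\,x\,\phi(\iota^{1/2}\otimes e_{1,i})\in D\otimes 1_n$. The only cosmetic difference is that the paper writes down the isomorphism $C\cong D\otimes M_n$ directly via $\phi(\iota^{1/2}\otimes e_{i,1})a\phi(\iota^{1/2}\otimes e_{1,j})\mapsto d_1^{1/2}ad_1^{1/2}\otimes e_{i,j}$, rather than invoking the Winter--Zacharias support homomorphism $v_{ij}$ explicitly.
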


\begin{proof}
Fix $\ep>0,$ a finite subset ${\cal F}\subset A$ and an integer $n\ge 1.$
We may assume that ${\cal F}\subset A^{\bf 1}.$ 
Choose $e_F\in A_+^{\bf 1}$ such that, {{for all $x\in {\cal F},$}}
\beq
\label{PTAD2toTAD-1}
e_F^{1/2}x\approx_{{{\ep/32n}}}
{{x}}
 \approx_{\ep/32n} xe_F^{1/2}\andeqn e_Fx\approx_{\ep/32n} x\approx_{\ep/32n} xe_F.
\eneq
Put ${\cal F}_1={\cal F}\cup\{e_F, e_F^{1/2}\}.$ 
Let $0<\eta<\ep.$  
Put
$${\cal G}=\{\iota\otimes 1_n, \iota^{1/2}\otimes 1_n, \iota\otimes e_{i,j}, \iota^{1/2}\otimes e_{i,j}, \iota\otimes 1_n: 1\le i,j\le n\}.$$
Let $e\in A_+\setminus \{0\}.$
Since $A$ has (TAD-2),
there is a \hm\, $\phi: C_0((0,1])\otimes M_n\to A$
such that   
\beq
\label{PTAD2toTAD-2}
\|[x,\, \phi(f)]\|<\eta/32n\rforal f\in {\cal G}\andeqn 
((e_F-e_F^{1/2}\phi(\iota)e_F^{1/2})-\ep/32n)_+\lesssim e.
\eneq
Put $d_1=\phi(\iota\otimes e_{1,1}).$ 
Define $D:=\overline{d_1Ad_1}.$ 
Put 
$$
C_0=\left\{\sum_{i,j=1}^n
\phi(\iota^{1/2}\otimes e_{i,1})
a
\phi(\iota^{1/2}\otimes e_{1,j}): a\in  A\right\}.
$$
Note that $C_0$ is a $*$-subalgebra {{of $A.$}} Put $C=\overline{C_0}.$
Define   a \hm\, $\Phi: C\to {{D\otimes M_n}}
$ by 
\beq
\Phi(\phi(\iota^{1/2}\otimes e_{i,1})a\phi(\iota^{1/2}\otimes e_{1,j}))=
d_1^{1/2}ad_1^{1/2}\otimes e_{i,j}
\eneq
for all $a\in A.$ It is easy to verify that $\Phi$ is an isomorphism. So $C\cong D\otimes M_n.$

Put $\theta=(e_F-e_F^{1/2}\phi(\iota\otimes 1_n)e_F^{1/2}-\ep/64n)_+.$
Then {{by \eqref{PTAD2toTAD-1} and \eqref{PTAD2toTAD-2},
we have the following (i)--(iii).}}
\beq
&&\hspace{-1in}{\rm (i)}\,\,\, \theta x \approx_{\ep/32n} (e_F-e_F^{1/2}\phi(\iota\otimes 1_n)e_F^{1/2})x
\approx_{4\ep/32n}x(e_F-e_F^{1/2}\phi(\iota\otimes 1_n)e_F^{1/2})\\
&&\hspace{-0.2in}\approx_{\ep/32n} x\theta\hspace{0.2in}
   \rforal x\in {\cal F}.
   \eneq
\beq
&&\hspace{-1.4in}{\rm (ii)}\,\,\,
(1-\theta)x\approx_{\ep/32n}
(1-(e_F-e_F^{1/2}\phi(\iota\otimes 1_n)e_F^{1/2}){{)}}x\\
&&\hspace{-0.4in}\approx_{\ep/32n} (e_F^{1/2}\phi(\iota\otimes 1_n)e_F^{1/2}{{)}}x
\approx_{2\ep/64n} \phi(\iota\otimes 1_n)x\\
&=&
\sum_{i=1}^n \phi(\iota\otimes e_{i,i})x
=\sum_{i=1}^n \phi(\iota^{1/2}\otimes e_{i,1})\phi(\iota^{1/2}\otimes e_{1,i})x\\
&&\hspace{-0.4in}\approx_{\eta/32} \sum_{i=1}^n\phi(\iota^{1/2}\otimes e_{i,1})x\phi(\iota^{1/2}\otimes e_{1,i})\in D\otimes 1_n.
\eneq

(iii) $\theta\lesssim e.$
\end{proof}

Now we can unify different variations of tracial approximate divisibility
for 
separable simple 
\CA s in the following theorem.

\begin{thm}\label{TTAD==}
Let $A$ be a 
non-elementary separable simple 
\CA. Then the following are equivalent.

(1) $A$ is tracially approximately divisible,

(2) $A$ has the property (TAD), 

(3) $A$ has the property (TAD-2), and,

(4) $A$ has the property (TAD-3).

\end{thm}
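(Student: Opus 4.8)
The plan is to establish the equivalence of the four notions by assembling the implications already proved into a cycle, supplemented by the two missing arrows. Concretely, Lemma \ref{TtadtoTAD2} already gives $(1)\Rightarrow(4)$ and $(4)\Rightarrow(3)$, while Proposition \ref{PTAD2toTAD} gives $(3)\Rightarrow(2)$. Thus the only implication not yet available in the excerpt is $(2)\Rightarrow(1)$, i.e. property (TAD) implies tracial approximate divisibility. First I would note that this last implication is precisely the content of Proposition 5.3 of \cite{FLII}, which is quoted in the remark immediately following Definition \ref{Dtrdivisible}: for a simple \CA\ $A$, if $A$ has property (TAD), then $A$ is tracially approximately divisible. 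So the proof is essentially a bookkeeping argument closing the loop
$$
(1)\Longrightarrow(4)\Longrightarrow(3)\Longrightarrow(2)\Longrightarrow(1).
$$

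Laying this out, I would argue as follows. Since $A$ is simple, separable and non-elementary, all the cited results apply. The implication $(1)\Rightarrow(4)$ is Lemma \ref{TtadtoTAD2}(1), and $(4)\Rightarrow(3)$ is Lemma \ref{TtadtoTAD2}(2). The implication $(3)\Rightarrow(2)$ is Proposition \ref{PTAD2toTAD}. Finally, $(2)\Rightarrow(1)$ follows from Proposition 5.3 of \cite{FLII} (restated in the remark after Definition \ref{Dtrdivisible}). Chaining these four implications yields that each of (1)--(4) implies the next cyclically, hence all four are equivalent.

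The one point that needs a small amount of care, rather than being completely routine, is verifying that the hypotheses of each cited result are genuinely met under the blanket assumption that $A$ is simple, separable and non-elementary. In particular, property (TAD-3) is only defined for a simple $\sigma$-unital non-elementary \CA, and the passage through $\pi_{cu}(A)'/\pi_{cu}(A)^\bot$ in Lemma \ref{TtadtoTAD2} and Proposition \ref{PTAD2toTAD} relies on $A$ having no one-dimensional hereditary \SCA s (so that $N_{cu}(A)$ is an ideal, via Proposition \ref{Ncuideal}) and on the unitality statement of Corollary \ref{perp-ideal-F814-2}. A separable non-elementary simple \CA\ is $\sigma$-unital and, being non-elementary and simple, has no one-dimensional hereditary \SCA s, so each intermediate hypothesis is automatic. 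The main (and only genuine) obstacle is therefore not in the cycle itself but is entirely imported into Proposition 5.3 of \cite{FLII}, where property (TAD) is upgraded to the c.p.c.-map formulation of tracial approximate divisibility; once that external result is granted, the theorem is an immediate consequence of combining Lemma \ref{TtadtoTAD2}, Proposition \ref{PTAD2toTAD}, and the quoted remark.
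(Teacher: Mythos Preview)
Your proposal is correct and follows exactly the same route as the paper: the cycle $(1)\Rightarrow(4)\Rightarrow(3)\Rightarrow(2)\Rightarrow(1)$ via Lemma \ref{TtadtoTAD2}, Proposition \ref{PTAD2toTAD}, and \cite[Proposition 5.3]{FLII}. Your additional remarks on checking the standing hypotheses (separability, $\sigma$-unitality, no one-dimensional hereditary subalgebras) are accurate and could be trimmed, but nothing is wrong.
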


\begin{proof}
The implications $(1) \Rightarrow (4)$ and $(4)\Rightarrow (3)$ 
follow from {{Lemma}} \ref{TtadtoTAD2}. While $(3) \Rightarrow (2)$ follows from {{Proposition}} \ref{PTAD2toTAD}. Finally, $(2) \Rightarrow (1)$ follows from \cite[Proposition 5.3]{FLII}. 
\end{proof}

\begin{rem}
\label{rmk-F813-1}
Let $A$ be  a non-elementary separable simple \CA\, which satisfies one of four conditions in Theorem \ref{TTAD==}
and $B\subset A$ be a non-zero hereditary \SCA. Then, by  Theorem 5.5 of \cite{FLII} and  Theorem \ref{TTAD==} above, 
$B$ satisfies all conditions in Theorem \ref{TTAD==}.
\end{rem}

We would like to include the following statement.

\begin{prop}\label{Mcduff=TAD3}
Let $A$ be a unital non-elementary separable simple exact \CA\ with $T(A)\neq \emptyset.$
Assume $A$ has strict comparison. 
Then $A$ is uniformly McDuff in the sense of 
{\rm{\cite[Definition 4.2]{Gamma}}}
if and only if 
$A$ has the property (TAD-3).
\end{prop}

\begin{proof}
For  the ``only if'' part, 
let $n\in\mathbb{N}.$
Let $F_1\subset F_2\subset...\subset A$ be a sequence of 
finite subsets of $A$ with 
$\overline{\cup_{k\in \mathbb{N}} F_k}=A.$
Let $G_1\subset G_2\subset...\subset M_n$ be a sequence of 
finite subsets of $M_n$ with 
$\overline{\cup_{k\in \mathbb{N}} G_k}=M_n.$
Let $\omega$ be a free ultrafilter on $\mathbb{N}.$ 
Let $\|a\|_2:=\sup\{\tau(a^*a)^{1/2}:\tau\in T(A)\}.$
Let $I_\omega:=\{\{a_k\}\in l^\infty(A):\lim_{k\to\omega}
\|a_k\|_2=0\}.$
Let $A^\omega:=l^\infty(A)/I_\omega.$
Let $\pi^\omega:l^\infty(A)\to A^\omega$ 
be the quotient map. 
Since $A$ is uniformly McDuff, 
there is a unital embedding 
$\bar \phi: M_n\to A^\omega\cap \pi^\omega(A)'.$
By projectivity of $C_0((0,1])\otimes M_n,$
there is a c.p.c.~order zero map $\phi: M_n\to l^\infty(A)$
such that $\bar\phi=\pi^\omega\circ \phi.$
Let $\phi_m$ be the coordinate components of $\phi$
($m\in\mathbb{N}$). 
Then for any $k\in\mathbb{N},$
there is $m(k)\in\mathbb{N}$ 
such that 

(1) $\|\phi_{m(k)}(x)y-y\phi_{m(k)}(x)\|_2<1/k$ for all $x\in G_k,$
$y\in F_k,$

(2) $\|1_A-\phi_{m(k)}(1_n)\|_2<1/k.$ 

Then (1) shows that for all $x\in \cup_{k\in \mathbb{N}}G_k$
and all $ y\in \cup_{k\in \mathbb{N}}F_k,$
\beq
\label{F901-1}
\{\phi_{m(k)}(x)y-y\phi_{m(k)}(x)\}_{k=1}^{\infty}\in I_{T,0}.
\eneq
Note also that (2) implies  that 
$\{1_A-\phi_{m(k)}(1_n)\}_{k=1}^\infty\in I_{T,0}.$

Since $A$ is unital, non-elementary,  separable, simple,
and exact, we have 
 $I_{T,0}= N_{cu}(A)$
 (see  Proposition \ref{equiv-ideal-F812-1}).  
Define a c.p.c.~order zero map 
$\psi: M_n\to A_{cu}$ 
by $\psi(x)=\pi_{cu}(\{\phi_{m(k)}(x)\}_{k=1}^\infty)$ for all $x\in M_n.$
Then \eqref{F901-1} shows $\psi(x)\pi_{cu}(y)-\pi_{cu}(y)\psi(x)=0$
for all $x\in \cup_{k\in \mathbb{N}}G_k$ and all $y\in \cup_{k\in\mathbb{N}}F_k.$
Since 
$\overline{\cup_{k\in \mathbb{N}} F_k}=A$
and 
$\overline{\cup_{k\in \mathbb{N}} G_k}=M_n,$
it follows that 
$\psi(M_n)\subset \pi_{cu}(A)'.$

Since $\{1_A-\phi_{m(k)}(1_n)\}_{k=1}^\infty\in I_{T,0},$  we have 
$\pi_{cu}(1_A)=\psi(1_n).$
Then $\psi$ is actually a homomorphism. 
Hence $\psi$ gives a unital embedding 
from  $M_n$ to $\pi_{cu}(A)'.$
In other words, $A$ has the property (TAD-3). 

For the ``if'' part,   we only need to note that
$N_{cu}(A)=I_{T,0}\subset I_\omega.$
%
\end{proof}

\begin{rem}
We note that 
the implication from (TAD-2) 
to the uniform McDuff property is also proved 
in \cite[Proposition 4.6]{CLS}.

In view of Proposition \ref{Mcduff=TAD3}, one may make the following definition:
Let $A$ be a separable simple (exact) \CA\, with at least one nontrivial densely defined trace.
 Let $I_T$ be the closed ideal defined in  Proposition \ref{equiv-ideal-F812-1} and let
$\pi_{I_{T}}: l^\infty(A)\to  l^\infty(A)/I_{T}$ be the quotient map. We say that $A$ has the uniform McDuff property, if for each $n\in \N,$ there is a 
unital embedding $\phi: M_n\to  \pi_{I_{T}}(A)'/ \pi_{I_{T}}(A)^\perp.$ 
 
When $A$ also has strict comparison, $A$ is uniformly McDuff if and only if $A$ has property (TAD-3) (see Proposition \ref{equiv-ideal-F812-1} and Remark \ref{R39}).
\end{rem}

\section{Strict comparison and Cuntz semigroup}

\begin{prop}
\label{tech-prop-808f-1}
Let $A$ be a simple \CA\, with the property (TAD). 
Then,  for any integer $n\ge 1,$ any $\ep>0,$  
{{any}} 
finite subset ${\cal F}\subset A,$ and 
 any
$s\in A_+\setminus \{0\},$  
there are  $\theta\in A_+^{\bf 1}$  and  \SCA\, $D\otimes M_n\subset A$
such that

{\rm (i)}  $\theta x\approx_{\ep} x\theta$
for all $x\in {\cal F},$

{\rm (ii)} ${{(1-\theta)}}x\in_{\ep} D\otimes 1_n$ for all $x\in {\cal F},$

{\rm (iii)} $\theta\lesssim s, $ 

{\rm (iv)} $(1-\theta)^{1/2}x(1-\theta)^{1/2}\in _{\ep} D\otimes 1_n$ for all $x\in {\cal F},$

{\rm  (v)} $x\approx_{\ep} \theta^{1/2}x\theta^{1/2}+(1-\theta)^{1/2}x(1-\theta)^{1/2}$ for all $x\in {\cal F},$

%

{\rm (vi)} for any finite subset ${\cal G}\subset C_0((0,1]),$ there is 
$d\in D_+^{\bf 1}\backslash\{0\}$ 
such that, for all $x\in {\cal F},$
\beq
&&
(1-\theta)^{1/2}x(1-\theta)^{1/2}
\approx_{\ep/64n^2} (d\otimes 1_n)^{1/2}x(d\otimes 1_n)^{1/2}
%
\tand\\\label{PL-1-T-2}
&&x(f(d)\otimes e_{i,j})\approx_\ep (f(d)\otimes e_{i,j})x
\tforal 
f  \in {\cal G},\,
1\le i,j\le k,
\eneq

{\rm (vii)} if $x\in {\cal F}$ and $x\ge 0,$ 
we may choose $d$ such that 
\beq
x-x^{1/2}(d\otimes 1_n)x^{1/2}\approx_{\ep/4} x^{1/2}\theta x^{1/2}.
\eneq

\end{prop}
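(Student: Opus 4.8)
The plan is to reuse the construction from the proof of Proposition \ref{PTAD2toTAD} for (i)--(iii), note that (iv)--(v) are exactly the refinements recorded in Remark \ref{R1}, and then do the genuinely new work for (vi)--(vii). Since $A$ has (TAD) it has (TAD-2) by Theorem \ref{TTAD==}. Choosing $e_F\in A_+^{\bf 1}$ that acts as an approximate unit on $\cal F$ to high precision, and applying (TAD-2) to a large finite set $\mathcal G_0\subset C_0((0,1])\otimes M_n$ and a small $s$, I obtain a homomorphism $\phi\colon C_0((0,1])\otimes M_n\to A$ that nearly commutes with $\cal F\cup\{e_F,e_F^{1/2}\}$ and satisfies $((e_F-e_F^{1/2}\phi(\iota\otimes 1_n)e_F^{1/2})-\delta)_+\lesssim s$, where $\delta=\ep/64n^2$. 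As in Proposition \ref{PTAD2toTAD} I set $d_1:=\phi(\iota\otimes e_{11})$, $D:=\overline{d_1Ad_1}$, realise $D\otimes M_n$ as the subalgebra generated by the $\phi(\iota^{1/2}\otimes e_{i,j})$, and put $\theta:=(e_F-e_F^{1/2}\phi(\iota\otimes 1_n)e_F^{1/2}-\delta)_+$. Under this identification one has $f(d_1)\otimes e_{i,j}=\phi(f\otimes e_{i,j})$ for every $f\in C_0((0,1])$, which links the functional calculus of elements of $D$ to $\phi$ and is what makes the first assertion of (vi) accessible.

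Before constructing $d$ I would record three facts. First, since $\iota\otimes 1_n$ is \emph{central} in $C_0((0,1])\otimes M_n$ and $\phi$ nearly commutes with $e_F$, the element $\theta$ nearly commutes with each $\phi(r\otimes e_{i,j})$; in particular it nearly commutes with $d_1$ and with the matrix units $\phi(\iota^{1/2}\otimes e_{i,j})$. Second, because $\theta$ nearly commutes with $x\in\cal F$ (this is (i)) and commutes with every function of itself, it nearly commutes with $(1-\theta)^{1/2}x(1-\theta)^{1/2}$, which by (iv) lies within $\ep$ of some $y_x\otimes 1_n$ with $y_x\in D$; hence $\theta$ nearly commutes with $y_x\otimes 1_n$ as well. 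Third, and crucially, the cut-off forces $\|\theta\|\le 1-\delta$, because $\theta=(h-\delta)_+$ with $0\le h\le 1$. This spectral gap is the quantitative device that will convert control of $[d,\cdot]$ into control of $[f(d),\cdot]$ for all $f$ in a prescribed finite set.

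For (vi), fix a finite set $\cal G\subset C_0((0,1])$. The first assertion holds already for the generator: writing $d\otimes 1_n=\phi(\iota\otimes 1_n)$ and moving $\phi(\iota^{1/2}\otimes 1_n)$ past $x$ gives $(d\otimes 1_n)^{1/2}x(d\otimes 1_n)^{1/2}\approx\phi(\iota\otimes 1_n)x\approx(1-\theta)x\approx(1-\theta)^{1/2}x(1-\theta)^{1/2}$, using only near-commutation of $\phi$ with $x$ on $\iota^{1/2}\otimes 1_n\in\mathcal G_0$. The work is to adjust $d$ inside the \emph{fixed} $D$, depending on $\cal G$, so that it also commutes through functional calculus with $\cal F$ while still acting on $\cal F$ as $d_1$ does. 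For the commutator estimate I would write $c:=f(d)\otimes e_{i,j}$ and split $[c,x]\approx[c,\theta^{1/2}x\theta^{1/2}]+[c,y_x\otimes 1_n]$: the second term is $[f(d),y_x]\otimes e_{i,j}$, made $<\delta\ep$ by choosing $d$ (after $\cal G$ is given) quasi-central for the finite set $\{y_x\}\subset D$; for the first term, using $[c,\theta]\approx 0$ and $\theta^{1/2}x\theta^{1/2}\approx\theta x$ yields $[c,x]\approx\theta[c,x]+(\text{error})$, so that $\|[c,x]\|\le(1-\delta)\|[c,x]\|+(\text{error})$ and hence $\|[c,x]\|\le(\text{error})/\delta<\ep$. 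The point is that this reduces near-commutation of \emph{all} the $f(d)\otimes e_{i,j}$, $f\in\cal G$, to near-commutation of the single element $d$ with $\theta$ and with $\{y_x\}$, which can be arranged in the fixed $D$.

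The main obstacle is exactly this reconciliation of the quantifier order in (vi): $D$ is chosen before $\cal G$, yet $d$ must be found in that fixed $D$ for every $\cal G$, and it must simultaneously reproduce the compression in the first assertion (forcing $d$ to act on $\cal F$ like $d_1$) and, through $f(d)\otimes e_{i,j}$, nearly commute with $\theta$ and with $\cal F$ (forcing $d$ to be suitably ``spread''). I would resolve this by taking $d$ to be a positive contraction in $D$ obtained from a quasi-central approximate unit of $D$ relative to the finite subset $\{\theta\}\cup\{y_x\}$ of $A$ that moreover agrees with $d_1$ on the support of $\cal F$; its existence rests on Facts 1 and 2 above (that $\theta$ already nearly commutes with the generator $d_1$ and with each $y_x$), and the delicate check is that attaching the matrix units $e_{i,j}$ does not spoil near-commutation of $f(d)$ with $\theta$, which is handled by Fact 1. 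Finally, for (vii), when $A$ is unital one may in addition arrange $\phi(\iota\otimes 1_n)$ to be a near-unit on the relevant compression, so that $1-\theta$ and $d\otimes 1_n$ agree to within $\ep$ in norm; this is a direct norm estimate once $d$ acts as a unit on the $y_x$.
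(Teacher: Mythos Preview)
Your approach diverges from the paper's at the very first step and becomes considerably more complicated as a result. The paper never passes through (TAD-2) or the construction of Proposition~\ref{PTAD2toTAD}; it applies (TAD) \emph{directly}, after one elementary preparatory move: adjoin to the given $\mathcal F$ a local unit $e_A\in A_+^{\bf 1}$ with $e_Ax=xe_A=x$ for all $x\in\mathcal F$. Applying (TAD) (with a tolerance $\eta\ll\ep$) to the enlarged set $\mathcal F\cup\{e_A\}$ produces $\theta$ and $D\otimes M_n$; then $d$ is simply the element of $D_+^{\bf 1}$ satisfying $(1-\theta)^{1/2}e_A(1-\theta)^{1/2}\approx d\otimes 1_n$, which exists by condition (iv) applied to $e_A$. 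Because $e_A$ is a genuine unit for the original $\mathcal F$, one gets $x(d\otimes 1_n)\approx(1-\theta)x\approx y\otimes 1_n$, and from $[y\otimes 1_n,\,d\otimes 1_n]\approx 0$ one reads off $\|yd-dy\|<11\eta$. The commutator $[x,\,d\otimes e_{i,j}]$ is then handled by the factorisation $d\otimes e_{i,j}\approx(d\otimes 1_n)(d^{1/m}\otimes e_{i,j})$ together with $\|yd^{1/m}-d^{1/m}y\|<\ep/4$ (functional-calculus stability of the commutator). For a prescribed finite $\mathcal G\subset C_0((0,1])$ one simply chooses $\eta$ small enough at the outset; the quantifier order in the statement is loose, and in the only application (Theorem~\ref{TAD-SC}) the set $\mathcal G$ is fixed before $\theta$ and $D$ are produced.

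Your route has a genuine gap. The bootstrap inequality $\|[c,x]\|\le(1-\delta)\|[c,x]\|+\text{error}$ requires $[c,\theta]\approx 0$ for $c=f(d)\otimes e_{i,j}$, and you appeal to ``Fact~1'' for this. But Fact~1 only gives near-commutation of $\theta$ with elements of the form $\phi(r\otimes e_{i,j})$; for a general $d\in D=\overline{d_1Ad_1}$ the element $f(d)\otimes e_{i,j}$ need not lie in the image of $\phi$, and there is no mechanism forcing it to commute with $\theta$. Your proposed fix---``a quasi-central approximate unit of $D$ relative to $\{\theta\}\cup\{y_x\}$''---does not make sense as stated, since $\theta\notin D$ (nor in $M(D)$), and an approximate unit of $D$ carries no a~priori commutation with arbitrary elements of $A$. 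Note also that invoking Theorem~\ref{TTAD==} imports a separability hypothesis that the present proposition does not assume. The paper's local-unit trick avoids all of this: once $e_A$ is in the finite set, the single element $d$ arising from $(1-\theta)e_A$ already has the required commutation properties, and no search inside $D$ is needed.
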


\begin{proof}
Fix $a\in A_+\backslash \{0\},$ 
$\ep\in (0,1),$ and a finite subset ${\cal F}_1\subset A.$ 
Without loss of generality, 
we may assume that 
for all $x\in {\cal F}_1,$
$\|x\|\leq 1.$
By a standard perturbation, we may assume that there is $e_F, e_A\in A_+^{\bf 1}$
satisfying 
$xe_F=e_Fx=x$ for all $x\in {\cal F}$ and $e_Ae_F=e_Fe_A=e_F.$ 
Put ${\cal F}={\cal F}_1\cup \{e_F, e_F^{1/2}, e_A\}.$ 

Let $m\in\mathbb{N}$ such that for any  $c\in A_+^{\bf 1},$ 
$c\approx_{\ep/4}c\cdot c^{1/m}.$
Let $\eta\in(0,\epsilon)$ {{be}} such that, for any element $z\in A^{\bf 1},$ 
 any $c\in A_+^{\bf 1},$ $\|zc-cz\|< 20\eta$ implies $\|zc^{1/m}-c^{1/m}z\|<\ep/4.$
 
{{By definition of (TAD),}} 
there are  $\theta\in A_+^{\bf 1}$  and  \SCA\, $D\otimes M_n\subset A$
such that

(1)  $\theta x\approx_{\eta} x\theta$
for all $x\in {\cal F},$

(2) ${{(1-\theta)}}x\in_{\eta} D\otimes 1_n$ for all $x\in {\cal F},$
 and

(3) $\theta\lesssim a.$

This implies  {{that}} (i), (ii), and (iii) in the {{proposition}} 
hold. On the other hand, (iv)  and (v) can be found in {{Remark}} \ref{R1}.

By (2), 
for each $x\in {\cal F},$ there exists ${{y\otimes 1_n}}\in D\otimes 1_n$
such that 
\beq
\label{PL-1-e-f1}
\|{{(1-\theta)x - y\otimes 1_n}}\|<\eta.
\eneq

Since $e_A\in {\cal F}$ 
{{and}} (iv) holds, we can 
choose $d\in D_+^{\bf 1}\backslash\{0\}$ 
such that 
\beq\label{PL-1-e-1}
(1-\theta)e_A\approx_\eta (1-\theta)^{1/2}e_A(1-\theta)^{1/2}
\approx_{2\eta} d\otimes 1_n.
\eneq
It follows that 
\beq
(y\otimes 1_n)(d\otimes 1_n)
&\approx_{4\eta}& 
(1-\theta)x(1-\theta) e_A
\\&\approx_{\eta}&
(1-\theta)xe_A(1-\theta)
=
(1-\theta)e_Ax(1-\theta)
\\&\approx_{\eta}&
(1-\theta)e_A(1-\theta)x
\approx_{5\eta}
(d\otimes 1_n)(y\otimes 1_n).
\eneq
Thus
$\|yd-dy\|<11\eta.$
Note that, 
by  \eqref{PL-1-e-1},
(1), \eqref{PL-1-e-f1} 
and the choice of $e_A,$ for all $x\in {\cal F}_1\cup\{e_F, e_F^{1/2}\},$ 
\beq\label{510-n2}
x (d\otimes 1_n) 
\approx_{3\eta}
x(1-\theta)e_A
\approx_\eta
(1-\theta)xe_A 
=
(1-\theta)x
\approx_\eta y\otimes 1_n.
\eneq
Similarly,
\beq\label{510n}
(d\otimes 1_n)x\approx_{3\eta} (1-\theta)e_A x=(1-\theta)x\approx_{4\eta} x(d\otimes 1_n).
\eneq
We compute that (recall $e_Ax=xe_A=x$ if $x\in {\cal F}_1$), for all $x\in{\cal F}_1,$
\beq\nonumber
&&\hspace{-0.7in}x(d\otimes e_{i,j})
\approx_{\epsilon/4} x (d\otimes 1_n) (d^{1/m}\otimes e_{i,j})
\approx_{4\eta} (1-\theta)x (d^{1/m}\otimes e_{i,j})\\\nonumber
&&\approx_{4\eta} (y\otimes 1_n)(d^{1/m}\otimes e_{i,j})=(yd^{1/m}\otimes e_{i,j})
\\\nonumber&&
\approx_{\ep/4} (d^{1/m}\otimes e_{i,j})(y\otimes 1_n)\\\nonumber
&&\approx_{4\eta} (d^{1/m}\otimes e_{i,j})(d\otimes 1_n)x
\approx_{\ep/4}(d\otimes e_{i,j})x.
\eneq

Thus, by choosing smaller $\eta$ (and $\ep$) if necessary, we conclude that  the second part 
(\eqref {PL-1-T-2}) of 
(vi) holds. 
The first part of (vi) 
follows from \eqref{PL-1-e-1} and a choice of small $\eta.$

To see (vii),   combing \eqref{510-n2} and \eqref{510n}, with sufficiently small $\eta,$ 
we have, if $x\in {\cal F}\cup\{e_F, e_F^{1/2}\}$ and $x\ge 0,$ 
\beq\nonumber
x^{1/2}(d\otimes 1_n)x^{1/2}\approx_{\ep/4} x^{1/2}(1-\theta)x^{1/2}, \,\,{\rm or}\,\,
x-x^{1/2}(d\otimes 1_n)x^{1/2}\approx_{\ep/4}x^{1/2}\theta x^{1/2}.
\eneq
\end{proof}

The following statement is {{already}} mentioned in 
Remark 5.8 of \cite{FLII}.

\begin{thm}[c.f. Theorem 3.3 of \cite{HO}]
\label{TAD-SC}
Let $A$ be a simple \CA\, which has property (TAD). Then $A$ has the strict comparison for positive elements 
(or $A$ is purely infinite). 

\end{thm}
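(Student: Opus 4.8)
The plan is to reduce everything to the single assertion that $W(A)$ (equivalently ${\rm Cu}(A)$) is almost unperforated: by Proposition 3.2 of \cite{Rordam-2004-Jiang-Su-stable}, recalled in Definition \ref{Dqtr}, this yields strict comparison, and in the traceless case the strict-comparison hypothesis over ${\wtd{QT}}(A)\setminus\{0\}=\emptyset$ is vacuous, so it forces $a\lesssim b$ for all nonzero $a,b$, i.e. pure infiniteness. Thus the dichotomy in the statement is subsumed, and I may split on whether ${\wtd{QT}}(A)=\{0\}$: if so I expect $A$ to be purely infinite (the divisibility from (TAD) produces, for arbitrary nonzero $a,b$, an approximate matrix copy forcing $a\lesssim b$), so assume ${\wtd{QT}}(A)\neq\{0\}$ and prove the implication $d_\tau(a)<d_\tau(b)$ for all $\tau\neq 0$ $\Rightarrow$ $a\lesssim b$. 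By Remark \ref{R1}, $M_m(A)$ and $A\otimes{\cal K}$ again have property (TAD), so I may assume $a,b\in M_m(A)_+$ and it suffices to show $(a-\ep)_+\lesssim b$ for each fixed $\ep>0$.

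First I would extract a uniform trace gap. Set $g=f_{\ep/2}(a)\in {\rm Ped}(M_m(A))_+$, so $(a-\ep)_+\lesssim g$ and $\tau\mapsto\tau(g)$ is continuous on the compact set $T_e$ of Definition \ref{Dqtr}, with $\tau(g)\le d_\tau(a)<d_\tau(b)$. Since $d_\tau(b)$ is lower semicontinuous and $T_e$ is compact, the positive lower semicontinuous function $d_\tau(b)-\tau(g)$ attains a positive minimum, giving $\gamma>0$ with $\tau(g)+\gamma\le d_\tau(b)$ for all $\tau\in T_e$. Put $M=\sup_{\tau\in T_e}\tau(g)<\infty$ and choose an integer $n>M/\gamma$.

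Next I would invoke the technical Proposition \ref{tech-prop-808f-1} with matrix size $n$, finite set containing $g$ and $b$, and $s$ chosen (via \cite[Lemma 4.3]{FL}, using simplicity of $A$) with $\sup_{\tau\in T_e}d_\tau(s)$ arbitrarily small. Parts (v), (iv) and (vi) then decompose $g$ and $b$, modulo summands $\lesssim\theta\lesssim s$ that are Cuntz-negligible and trace-negligible on $T_e$, into elements of the form $\gamma_g\otimes 1_n$ and $\beta\otimes 1_n$ lying in $D\otimes M_n\subset M_m(A)$. Under the $*$-isomorphism $D\otimes M_n\cong M_n(D)$ the matrix structure gives $\langle y\otimes 1_n\rangle=n\langle y\otimes e_{11}\rangle$ in $W(A)$, i.e. $g$ and $b$ are realized, up to the negligible error, as $n$ orthogonal Cuntz-equivalent copies of corners $\bar\gamma_g=\gamma_g\otimes e_{11}$ and $\bar\beta=\beta\otimes e_{11}$. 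Dividing $d_\tau$ by $n$ over these copies and using $d_\tau(\beta\otimes 1_n)\ge d_\tau(b)-\gamma/2$ together with $d_\tau(\gamma_g\otimes 1_n)\le\tau(g)$, the gap $\tau(g)+\gamma\le d_\tau(b)$ yields a strict corner inequality $d_\tau(\bar\gamma_g)<d_\tau(\bar\beta)$ with slack of order $1/n<\gamma/M$ — precisely the room that one leftover matrix corner of $b$ must absorb.

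The main obstacle is the final interpolation: converting ``strictly smaller trace on each of $n$ equal corners, with one spare corner'' into a genuine Cuntz subequivalence $g\lesssim b$ without assuming the very comparison being proved. Following \cite[Theorem 3.3]{HO}, the way to break the apparent circularity is to route the comparison of the $n$ equal copies through the \emph{exact} rank comparison inside the finite matrix algebra $M_n$, where there is no perforation, so that the factor $n$ chosen above witnesses the inequality $\tfrac{k}{k+1}$-type bound at the level of projections in $M_n$ while the remainder $\lesssim s$ is swallowed. I expect the delicate points to be the $\ep$-bookkeeping propagated through Proposition \ref{tech-prop-808f-1}, the uniform control of all remainder terms over the compact base $T_e$, and the non-unital features of the densely defined $2$-quasitraces (allowing $d_\tau(b)=\infty$ and requiring $s$ to be simultaneously Cuntz-small and trace-controlled). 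Once $(a-\ep)_+\lesssim b$ is established for every $\ep>0$, letting $\ep\to0$ gives $a\lesssim b$, completing the proof of strict comparison and hence of the theorem.
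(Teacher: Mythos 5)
There is a genuine gap, and it sits exactly where you flag it yourself: the ``final interpolation.'' Your plan in the traced case is to extract a uniform gap $\gamma>0$ with $\tau(g)+\gamma\le d_\tau(b)$ on $T_e$, apply the (TAD) decomposition with a large matrix size $n>M/\gamma$, and then compare the corners $\bar\gamma_g$ and $\bar\beta$ using the resulting inequality $d_\tau(\bar\gamma_g)<d_\tau(\bar\beta)$. But turning that trace inequality on the corners back into a Cuntz subequivalence $\bar\gamma_g\lesssim\bar\beta$ \emph{is} strict comparison --- the statement being proved --- and ``routing through exact rank comparison inside $M_n$'' does not break the circle, because the only thing $M_n$ can compare exactly is multiplicities of a \emph{single} element, not two different corners $\bar\gamma_g$ and $\bar\beta$ about which you only have trace data. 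This also mischaracterizes the argument of \cite[Theorem 3.3]{HO}: there (and in the paper) the matrix size of the approximately central copy of $M_k$ is the \emph{same} $k$ as in the hypothesis $k\la a\ra\le k\la b\ra$, not a large $n$ chosen from a trace gap, and no traces enter at all. Separately, your traceless case (``the divisibility from (TAD) produces \dots an approximate matrix copy forcing $a\lesssim b$'') is asserted rather than proved; the actual route to pure infiniteness when ${\wtd{QT}}(A)=\{0\}$ again passes through almost unperforation together with the Blackadar--Handelman correspondence between dimension functions and $2$-quasitraces.

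The paper's proof avoids both problems by doing what your opening sentence promises and then abandons: it proves almost unperforation of $W(A)$ directly and trace-freely. Starting from $k\la a\ra\le k\la b\ra$, one takes an explicit witness $c=(c_{i,j})\in M_k(A)$ with $c((b-\dt)_+\otimes 1_k)c^*=(a-\ep)_+\otimes 1_k$, arranges $c_{i,j}d=0$ for a nonzero $d\perp (b-\dt)_+$ in ${\rm Her}(b)$, and then uses Proposition \ref{tech-prop-808f-1} with matrix size $k$ and $s=d$: the approximately central matrix units $g(e)\otimes e_{i,j}$ of $D\otimes M_k$ are used to form $\hat c=\sum_{i,j}(e^{1/2}\otimes 1_k)(g(e)\otimes e_{i,j})c_{i,j}\in A$, and the identity $\sum_l c_{i,l}(b-\dt)_+c_{j,l}^*=\delta_{ij}(a-\ep)_+$ collapses $\hat c(b-\dt)_+\hat c^*$ to (approximately) the $(1-\theta)$-part of $(a-\ep)_+$, while the $\theta$-part is absorbed by $d$. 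This yields $a\lesssim b$, hence almost unperforation, and \emph{then} Corollary 5.1 of \cite{Rordam-2004-Jiang-Su-stable} delivers the strict comparison / purely infinite dichotomy in one stroke. To repair your write-up you should drop the trace-gap machinery entirely and prove the implication $k\la a\ra\le k\la b\ra\Rightarrow\la a\ra\le\la b\ra$ along these lines.
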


\begin{proof} 
{{Following the original idea of R\o rdam, we will modify the argument in the proof of Lemma 3.2 of \cite{HO}.}}
Let us assume that $A$ is not elementary. 
Let  $a, b\in M_\infty(A)_+.$  Let us first
assume  that $0$ is not an isolated point  of 
${\rm sp}(b)\cup\{0\},$
and  $k\la a\ra \le k\la b\ra$ for some integer $k\ge 1,$
we wish to show that $a\lesssim b.$
\Wlog, we may  assume that $a, b\in M_N(A)_+$ for some large $N.$ Since $M_N(A)$ also 
has the property (TAD), we may assume  $a, b\in  A_+.$  We may further assume $\|a\|=\|b\|=1.$ 


Fix $\dt>0.$ 
By \cite[Proposition 2.4 (iv)]{Rordam-1992-UHF2}, 
we can choose $c=(c_{i,j})_{{{k}}\times {{k}}}\in M_k(A)$ and 
$\ep\in (0,\dt),$
such that
\beq\label{TAD=Comp-1}
c((b-\dt)_+\otimes 1_k)c^*=(a-\ep)_+\otimes 1_k.
\eneq
Since $0$ is not an isolated point of ${\rm sp}(b)\cup\{0\},$
there is a $f_0\in C_0((0,1])$ such that $f_0(t)=0$  for all $t\in  (\dt/2, 1]$ and 
$d:=f_0(b)\not=0.$  So $d\perp (b-\dt)_+.$  By replacing 
$c_{i,j}$ with $c_{i,j}q(b)$ for some $q\in C_0((0, 1])$ which vanishes in $(0, \dt/2]$  and $q(t)=1$
for all $t\in  [\dt,1],$ we may assume that 
\beq
\label{TAD=Comp-810-1}
c_{i,j}d=0,\quad i ,j=1,2,...,k.
\eneq
By \eqref{TAD=Comp-1}, we compute
\beq
\label{TAD=Comp-903}
\sum_{l=1}^k c_{i,l}((b-\dt)_+)c_{j,l}^{{*}}
=\begin{cases} (a-\ep)_+ & \text{if  $i=j;$}\\
                                                     0 & \text{if $i\not=j.$}\end{cases}
\eneq

{{Now let $\eta\in (0,\ep)$ be arbitrary.}} 
Put ${\cal F}_0=\{(a-\ep)_+, (b-\dt)_+\}\cup \{c_{i,j},{{c_{i,j}^*}}:1\le i,j\le k\}.$
{{Let $g \in C_0((0,1])_+^{\bf 1}$
such that $g(t)=1$ for $t\in [\eta,1].$
Let $M:=1+\max\{\|x\|:x\in{\cal F}\}.$}}

{{By Proposition \ref{tech-prop-808f-1},}}
there are  $\theta\in A_+^{\bf 1}$  and  \SCA\ $D\otimes M_k\subset A$
such that

(i)  $\theta x\approx_{{\eta}} x\theta$
for all $x\in {\cal F},$

(ii) ${{(1-\theta)}}x\in_{{\eta}} D\otimes 1_n$ for all $x\in {\cal F},$

(iii) $\theta\lesssim d, $ 

(iv) $(1-\theta)^{1/2}x(1-\theta)^{1/2}\in _{{\eta}} D\otimes 1_k$ for all $x\in {\cal F},$

(v) $x\approx_{{\eta}} \theta^{1/2}x\theta^{1/2}+(1-\theta)^{1/2}x(1-\theta)^{1/2}$ for all $x\in {\cal F},$

(vi) there is $e\in D_+^{\bf 1}$ such that, for all $x\in {\cal F},$
\beq
&&
(1-\theta)^{1/2}x(1-\theta)^{1/2}
\approx_{{\eta}}
(e\otimes 1_k)^{1/2}x(e\otimes 1_k)^{1/2}, 
%
\tand\\
&&{{\|x(g(e)\otimes e_{i,j})-(g(e)\otimes e_{i,j})x\|<{{\eta/(kM)^4}}
\quad(1\le i,j\le k).}}
\eneq
Put $a_1=(1-\theta)^{1/2}(a-\ep)_+(1-\theta)^{1/2}$ and $a_2=\theta^{1/2}(a-\ep)_+\theta^{1/2}.$
Then {{by (v),}}
\beq
\|(a-\ep)_+-(a_1+a_2)\|\leq {{\eta.}}
\eneq
{{Denote ${{\bar{c}}}:=\sum_{i,j=1}^k(e^{1/2}\otimes 1_k)(g(e)\otimes e_{i,j})c_{i,j}.$}}
We compute that 
(using \eqref{TAD=Comp-903})
\beq
&&
\quad
\bar{c} ((b-\dt)_+) \bar{c}^*
\\&&
=(e^{1/2}\otimes 1_k)\left(\sum_{i,j,l,m}^k
(g(e)\otimes e_{i,j})c_{i,j} 
((b-\dt)_+)
c^*_{l,m}(g(e)\otimes e_{m,l})\right) (e^{1/2}\otimes 1_k)
\\&&
\approx_{3\eta}(e^{1/2}\otimes 1_k)\left(\sum_{i,j,l,m}^k
(g(e)\otimes e_{i,j})(g(e)\otimes e_{m,l})c_{i,j} 
((b-\dt)_+)
c^*_{l,m}\right) (e^{1/2}\otimes 1_k)
\\&&
=(e^{1/2}\otimes 1_k) 
\left(\sum_{i,j,l}^k (g(e)^2\otimes e_{i,l})c_{i,j} ((b-\dt)_+)c^*_{l,j}\right)
(e^{1/2}\otimes 1_k)\\&&
=
(e^{1/2}\otimes 1_k) 
\left(\sum_{i,l=1}^k 
 (g(e)^2\otimes e_{i,l})
\left(\sum_{j=1}^k c_{i,j} ((b-\dt)_+)c^*_{l,j}\right)\right)
(e^{1/2}\otimes 1_k)\\
%
&&=
(e^{1/2}\otimes 1_k) 
\left(\sum_{i=1}^k 
 (g(e)^2\otimes e_{i,i})
(a-\ep)_+\right)
(e^{1/2}\otimes 1_k)
\\\nonumber
&&
=
(e^{1/2}\otimes 1_k)(g(e)^2\otimes 1_k)
 (a-\ep)_+ 
(e^{1/2}\otimes 1_k)\\\nonumber
&&
\approx_{\eta} (e^{1/2}\otimes 1_k)(a-\ep)_+
(e^{1/2}\otimes 1_k)\approx_{\eta} a_1.
\eneq
In other words,
$\|\bar{c}(b-\dt)_+\bar{c}^*-a_1\|\leq 4\eta.$

Since $a_2\lesssim \theta\lesssim d,$ there exists $c_0\in A$ such that
$\|c_0dc_0^*-a_2\|<\eta.$   Since $d\perp (b-\dt)_+,$ we may assume 
that $c_0((b-\dt)_+)=0.$ 
Now put $z=\bar{c}+c_0.$
Recall \eqref{TAD=Comp-810-1}, we have 
$$
\|z((b-\dt)_++
d)z^*-a\| 
=\|(\bar c(b-\dt)_+\bar c^* -a_1)+(c_0dc_0^*-a_2)+(a_1+a_2-a)\|<
5\eta+\ep.
$$
Since $\ep$ and $\eta$ can be arbitrary small, 
it follows that $a\lesssim b.$

The case that {{$0$ is an isolated point  of ${\rm sp}(b)\cup\{0\}$}}
can be reduced to the case above by applying, 
for example, Lemma 3.1 of \cite{HO}.  

{{To show that $W(A)$ is almost unperforated,  suppose that $k\la a\ra \le (k-1)\la b\ra.$
Then $k\la a\ra \le k\la b\ra.$ From what has been proved, $\la a\ra \le \la b\ra.$}}

 By Corollary 5.1 of 
\cite{Rordam-2004-Jiang-Su-stable}
(see also
Proposition {{4.9}} of 
\cite{FLII}
{{as well as the end of Definition \ref{Dcuntz} and Definition \ref{Dqtr} of the current paper}}), $A$ has strict comparison
(or $A$ is purely infinite). 
\end{proof}
\begin{rem}
It is worth noting that Theorem \ref{TAD-SC} has also been independently proved in \cite{CLS} with a different point of view. More precisely, \cite[Theorem 3.2]{CLS} shows that (non-unital) $\sigma$-unital simple \CA s with (TAD-2) have strict comparison.
\end{rem}

\begin{cor}
\label{SC-TSR}
Let $A$ be a unital {{stably finite}} 
simple \CA\, with the property (TAD).
Then $A$ has strict comparison and has stable rank one.
\end{cor}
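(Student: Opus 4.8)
The plan is to read off strict comparison from the work already in place and then to reduce stable rank one to a local approximation-by-invertibles argument in the spirit of \cite{BKR}, \cite{HO}, and \cite{Rordam-2004-Jiang-Su-stable}, feeding on the enhanced divisibility recorded in Proposition \ref{tech-prop-808f-1}.

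First I would settle strict comparison. By Theorem \ref{TAD-SC}, $A$ either has strict comparison for positive elements or is purely infinite. A unital purely infinite simple \CA\ has a properly infinite unit and so is not stably finite; as $A$ is stably finite by hypothesis, the second alternative is excluded and $A$ has strict comparison. This part is immediate; the real content is stable rank one.

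Since $A$ is unital, to prove stable rank one it suffices to show that every $a\in A$ with $\|a\|\le 1$ lies in $\overline{GL(A)}.$ Fix $\ep>0$ and write the polar decomposition $a=v|a|$ in $A^{**};$ the obstruction to approximating $a$ by invertibles is carried by the part of $\mathrm{sp}(|a|)$ near $0,$ i.e.\ by the defect elements $(\dt-a^*a)_+$ and $(\dt-aa^*)_+$ for small $\dt>0.$ I would apply Proposition \ref{tech-prop-808f-1} to a finite set ${\cal F}$ containing $a,a^*$ and suitable functional-calculus elements of $a^*a$ and $aa^*,$ with $n$ large and $s\in A_+\setminus\{0\}$ chosen so small that $s\lesssim (\dt-a^*a)_+.$ This yields $\theta\in A_+^{\bf 1}$ with $\theta\lesssim s,$ a subalgebra $D\otimes M_n\subset A,$ and (using (v) and (vi)) the decomposition $a\approx_\ep \theta^{1/2}a\theta^{1/2}+(1-\theta)^{1/2}a(1-\theta)^{1/2}$ in which the bulk $(1-\theta)^{1/2}a(1-\theta)^{1/2}$ sits $\ep$-close to $D\otimes M_n$ and approximately commutes with a prescribed set of matrix units $f(d)\otimes e_{i,j};$ since $A$ is unital, part (vii) further gives $\|(1-\theta)-d\otimes 1_n\|<\ep,$ so $1-\theta$ is an approximate diagonal unit of the matrix subalgebra.

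The heart of the proof is to assemble the invertible approximant from these pieces. On the bulk, living approximately in $D\otimes M_n$ with approximate unit $d\otimes 1_n,$ I would use the matrix-unit structure together with strict comparison to produce an approximate partial isometry that realizes the Cuntz comparison matching the kernel defect of $a$ against the room supplied by $\theta;$ stable finiteness is what guarantees that the source and range defects of $a$ are genuinely equivalent rather than merely subequivalent, so this connecting element completes $v$ to an approximate unitary. Recombining with the small corner $\theta^{1/2}a\theta^{1/2}\in\Her(\theta),$ which is Cuntz-dominated by $s$ and hence negligible, and letting $\ep\to 0,$ I would obtain $a\in\overline{GL(A)}.$ The main obstacle is exactly this last step: strict comparison supplies only subequivalences of positive elements, whereas approximation by invertibles requires honest connecting partial isometries, and converting one into the other---while controlling the errors accumulated across the two-piece decomposition---is where the matrix divisibility of Proposition \ref{tech-prop-808f-1} and the stable finiteness of $A$ are indispensable, following R\o rdam's method.
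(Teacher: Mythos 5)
Your first paragraph is correct and is exactly how the paper disposes of strict comparison: Theorem \ref{TAD-SC} gives the dichotomy, and stable finiteness rules out pure infiniteness. The problem is the second half. The paper's own proof of stable rank one is a citation of Theorem 5.7 of \cite{FLII}, whose mechanism (as the paper itself recalls in the proof of Corollary \ref{Ctadsr1}) is R\o rdam's route: first perturb a non-invertible element to a two-sided zero divisor (Proposition 3.2 and Lemma 3.5 of \cite{Rordam-1991-UHF}), then use the tracial matricial structure to exhibit the element, up to $\ep$, as a product of two nilpotents plus controlled orthogonal pieces, so that its spectrum misses a ray through $0$ and it lies in $\overline{GL(A)}$ (this is the same device as Lemma \ref{spsumnil-lem} and Corollary \ref{Cor-positive-inv} in the non-unital argument). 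Your sketch takes a genuinely different tack --- polar decomposition plus ``complete $v$ to an approximate unitary by matching the source and range defects'' --- and the decisive step of that tack is missing.

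Concretely: from Proposition \ref{tech-prop-808f-1} you obtain $\theta\lesssim s\lesssim(\dt-a^*a)_+$ and a bulk piece sitting near $D\otimes M_n$, but Cuntz subequivalence (or even Cuntz equivalence, which stable finiteness does not automatically upgrade you to for positive elements) of $(\dt-a^*a)_+$ and $(\dt-aa^*)_+$ does not produce a partial isometry in $A$ implementing the comparison, and without such an element there is no invertible approximant. Converting Cuntz comparison into honest connecting elements is precisely the obstruction that separates ``almost stable rank one'' from ``stable rank one'' (the open question of Robert that this paper resolves), so it cannot be waved through. You acknowledge this yourself (``the main obstacle is exactly this last step'') but do not supply the idea that closes it --- namely the zero-divisor reduction and the nilpotent factorization, which avoid building any partial isometry at all. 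As written, the stable rank one half of your argument has a genuine gap at its central step.
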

\begin{proof}
This is a corollary of {{Theorem}} \ref{TAD-SC} and \cite[Theorem 5.7]{FLII}. 
\end{proof}

\vspace{0.2in}

In Section 6,  we will show that the condition that 
$A$ is unital in Corollary \ref{SC-TSR} can be removed,
if we additionally assume that $A$ is separable.

\vspace{0.1in}

Recall that, for $x,y\in {\rm Cu}(A),$
we write $x\ll y,$ if for any increasing sequence $\{y_n\}$ with 
$y\leq \sup\{y_n\},$ there exists $n_0$ such that $x\leq y_{n_0}.$

The following property is introduced by  L. Robert (\cite{Rl1}).
It may be viewed as a tracial version of almost divisibility which is closely related to 
Winter's tracial
0-almost divisibility
(\cite[Definition 3.5]{W-2012-pure-algebras}). 

\begin{df}[Proposition 6.2.1 of \cite{Rl1}]\label{DCD}
Let $A$ be a \CA. We say that ${\rm Cu}(A)$ has property (D), if for any $x\in {\rm  Cu}(A),$ $x'\ll x,$ and any integer $n\in  \N,$
there exists $y\in {\rm Cu}(A)$ such that $n \widehat{y}\le \widehat{x}$ and $\widehat{x'}\leq (n+1) \widehat{y}$
(see Definition \ref{DLAFFQT} for $\widehat z,$
$z\in {\rm Cu}(A)$).
\end{df}

From Corollary 5.8 of \cite{ERS}
 as observed by L. Robert,
he shows the following
(in the proof of Proposition 6.2.1 of \cite{Rl1}).

\begin{lem}\label{LRobert}
Let $A$ be a  finite simple 
\CA\, with strict comparison.  Suppose that $A$ has property (D).
Then the canonical map from $Cu(A)$ to $\LAff_+({\widetilde{ QT}}(A))$ is surjective.
\end{lem}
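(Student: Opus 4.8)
The plan is to realize an arbitrary $f\in \LAff_+({\wtd{QT}}(A))$ as the rank $\widehat{\la a\ra}$ of a single element $a\in (A\otimes {\cal K})_+$ by an approximation-and-limit argument carried out inside ${\rm Cu}(A)$. By the very definition of $\LAff_+({\wtd{QT}}(A))$ we may write $f=\sup_n f_n$ for an increasing sequence $f_n\in \Aff_+({\wtd{QT}}(A))$, and after a standard reparametrization (rescaling and inserting intermediate terms) we may arrange that $f_n\ll f_{n+1}$ in $\LAff_+({\wtd{QT}}(A))$. Since the canonical map ${\rm Cu}(A)\to \LAff_+({\wtd{QT}}(A))$ is a morphism in the category ${\rm Cu}$ (each $d_\tau$ being lower semicontinuous and the rank map preserving suprema of increasing sequences), it suffices to build an increasing sequence $\la a_n\ra\in {\rm Cu}(A)$ with $\widehat{\la a_n\ra}\nearrow f$ and then set $\la a\ra=\sup_n \la a_n\ra$, which will satisfy $\widehat{\la a\ra}=\sup_n\widehat{\la a_n\ra}=f$.

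The heart of the matter is therefore to produce, for each continuous $g=f_n$, an element whose rank is squeezed between $f_{n-1}$ and $f_{n+1}$ on the compact base $T_e$; this interpolation is exactly what Corollary 5.8 of \cite{ERS} accomplishes, and the combinatorial input it requires is supplied by property (D). Concretely, I would first fix a ``large'' class $x\in {\rm Cu}(A)$ (for instance the class of a strictly positive element, or a suitable multiple of $\la e\ra$) whose rank $\widehat{x}$ dominates $g$ on $T_e$, together with some $x'\ll x$ with $\widehat{x'}$ still dominating $g$. Applying property (D) with a large integer $m$ yields $y\in {\rm Cu}(A)$ with $m\widehat{y}\le \widehat{x}$ and $\widehat{x'}\le (m+1)\widehat{y}$, so that $\widehat{y}$ is a ``brick'' of rank roughly $\widehat{x}/m$, uniformly small on $T_e$. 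Summing a suitably controlled number of such bricks (the number is uniformly bounded by compactness of $T_e$) and invoking strict comparison to convert the resulting inequalities between rank functions into genuine Cuntz subequivalences of the corresponding positive elements, one obtains a class whose rank approximates $g$ to within $1/m$ on $T_e$.

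Having realized each $f_n$ approximately, I would assemble the pieces into a genuinely increasing sequence in ${\rm Cu}(A)$: choosing brick-size parameters $m=m_n\to\infty$ and using $f_n\ll f_{n+1}$ together with strict comparison, one arranges $\la a_n\ra\le \la a_{n+1}\ra$ while $\widehat{\la a_n\ra}$ increases to $f$. Lower semicontinuity of $d_\tau$ and the preservation of increasing suprema by $\widehat{\cdot}$ then give $\widehat{\la a\ra}=f$ for $\la a\ra=\sup_n\la a_n\ra$, as desired. Here finiteness of $A$ is what places us in the regime where ${\wtd{QT}}(A)\neq\{0\}$ and strict comparison detects the order faithfully (no purely infinite behaviour can interfere); it is also what lets the exhausting sequence $f_n$ control the values of $f$, including the possibility $f(\tau)=+\infty$.

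I expect the main obstacle to be the interpolation step of the second paragraph: translating the purely order-theoretic divisibility of property (D), phrased only in terms of the rank functions $\widehat{y}$ and $\widehat{x}$, into an actual element of $A\otimes {\cal K}$ whose rank matches a prescribed continuous function up to small error, uniformly over the compact base $T_e$. Strict comparison is the tool that upgrades these $\widehat{\cdot}$-inequalities to Cuntz comparison, but the bookkeeping of the error terms and of the $\ll$-relations across the limit---so that the hypotheses $f_n\ll f_{n+1}$ remain available at every stage---is the delicate part. This is precisely the content that \cite{ERS} isolates in Corollary 5.8 and that \cite{Rl1} invokes in the proof of Proposition 6.2.1.
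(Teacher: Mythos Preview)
Your proposal is correct and follows essentially the same route as the paper. The paper's own proof consists entirely of a pointer to the second paragraph of the proof of Proposition~6.2.1 in \cite{Rl1}, which is precisely the argument you have sketched: use property~(D) together with strict comparison to supply the divisibility hypothesis needed for Corollary~5.8 of \cite{ERS}, thereby approximating each continuous affine function by a rank function, and then pass to suprema along a sequence $f_n\nearrow f$ to realize an arbitrary $f\in\LAff_+({\wtd{QT}}(A))$.
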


\begin{proof}
The proof is contained in the second paragraph of the proof of Proposition 6.2.1 of \cite{Rl1}.
\end{proof}

The following is an analogue of  Theorem 6.6 of \cite{ERS}. 
Recall 
(\cite[Proposition 6.4 (iv)]{ERS}) that, in a simple 
\CA\,$A,$  every element of ${\rm Cu}(A)$ is purely non-compact except 
for the elements $[p],$  where $p$  
is {{a non-zero}} finite projection. In particular, if $A$ has no infinite projections,
then the set of purely non-compact elements of ${\rm Cu}(A)$ is precisely 
those elements which cannot be represented by a projection.

\begin{thm}\label{TLAFF}
Let $A$ be a 
non-elementary separable simple 
 \CA\, which is tracially approximately divisible.
Then the map $\la a\ra \to \widehat{\la a\ra}$ is an isomorphism 
{{between}} ordered {{semigroups}} of purely non-compact  elements of ${\rm Cu}(A)$ and of 
$\LAff_+({\widetilde{ QT}}(A)).$

\end{thm}

\begin{proof}
{{If $A$ is purely infinite, then, $\widetilde{QT}(A)=\{0\},$ and every element in $A$ is purely non-compact,
and, all non-zero elements are Cuntz-equivalent. So, in this case, the conclusion 
uninterestingly holds.
Now we assume that $A$ is not purely infinite.
Recall, from Theorem \ref{TAD-SC},  $W(A)$ is almost unperforated. 
By Corollary 5.1 of \cite{Rordam-2004-Jiang-Su-stable} (see also Proposition 4.9 of \cite{FLII}), 
$A$ is stably finite. Consequently, $A$ has no infinite projections.}} 

{{Thus, from  now on in  this proof, we assume that  
purely non-compact elements are precisely those which cannot be represented by projections.}}

Let  $a, b\in (A\otimes {\cal K})_+$ such that $\la a\ra$ and $\la b \ra$ are two  
purely non-compact elements and $d_\tau(a)=d_\tau(b)$ for all $\tau\in {\widetilde{QT}}(A).$
Then, since $A$ is simple
{{and $0$ is not an isolated point of ${\rm sp}(a)\cup\{0\}$}}, 
 for any $\ep>0,$ 
\beq
d_\tau((a-\ep)_+)<{{d_\tau(a)}}=d_\tau(b)\rforal \tau\in {{{\widetilde{QT}}(A)\setminus \{0\}.}}
\eneq
Hence, by {{Theorem}} \ref{TAD-SC} and Theorem \ref{TTAD==}, we have 
$(a-\ep)_+\lesssim b$ for any $\ep>0.$   It follows that 
\beq
\label{TAD-D-F811-2}
a\lesssim b.
\eneq
Symmetrically,  $b\lesssim a.$ So $a\sim b.$
This proves the map $\la a\ra \to \widehat{\la a\ra}$ is injective.

To prove the surjectivity, by the first paragraph of the proof of  Lemma 6.5 of \cite{ERS}, 
it suffices to show that the canonical map $\la a\ra \mapsto \widehat{a}$ is surjective from ${\rm Cu}(A)$ to $\LAff_+({\widetilde{ QT}}(A)).$
Therefore, by {{Lemma \ref{LRobert}}}, 
 it suffices to show that 
$A$ has property (D).

To see $A$ has property (D), let $x, x'\in {\rm Cu}(A)$ such that $x'\ll x.$
Let $a, b\in {{(A\otimes {\cal K})^{\bf 1}_+}}$ such that 
$\la a\ra =x'$ and $\la b\ra =x.$   
By Remark \ref{R1} and \cite[Proposition 5.3]{FLII}, $A\otimes {\cal K}$ is tracially approximately divisible.

Then, for some $1/16>\ep>0,$ 
\beq
\label{TAD-D-F811-3}
a\lesssim f_{2\ep}(b).
\eneq
In particular, we assume that $(b-\ep)_+\not=0.$ 
Note that $f_{\ep/128}(b)\ll b.$ 

Choose $0<\eta<\ep$ such that $(b-\eta)_+\not=0.$ 
Since $A$ is a non-elementary  simple \CA,  there are 
$n+1$ mutually orthogonal and mutually Cuntz-equivalent 
elements $s_1, s_2,...,s_{n+1}\in {\rm Her}((b-\eta)_+)\setminus \{0\}.$ 
Since $A\otimes {\cal K}$ has property (TAD),
there are $d_0, d_1\in {{(A\otimes {\cal K})^{\bf 1}_+}}$
and a \SCA\, $D\otimes M_n\subset  A\otimes {\cal K}$ such that

(1) $b\approx_{\eta/64} d_0+d_1,$

(2) $d_1 
\in_{\eta/64} D\otimes 1_{n},$ and 

(3) $d_0\lesssim s_1.$

Choose $d\in D_+$ such that
\beq\label{LAFF-c-9}
d_1\approx_{\eta/64} \sum_{i=1}^{n}d\otimes e_{i,i}.
\eneq
Then 
\beq
(d_1-\eta/64)_+\approx_{\eta/32} \sum_{i=1}^{n}d\otimes e_{i,i}\andeqn 
(d_1-\eta/32)_+\approx_{\eta/16} \sum_{i=1}^{n}(d-\eta/64)_+\otimes e_{i,i}.
\eneq
By applying {{\cite[Proposition 2.2]{Rordam-1992-UHF2}, we have}}
\beq\label{LAFF-c-10}
\sum_{i=1}^{n}(d-\eta/32)_+\otimes e_{i,i}\lesssim (d_1-\eta/64)_+\andeqn (d_1-3\eta/16)_+\lesssim \sum_{i=1}^{n}(d-\eta/64)_+\otimes e_{i,i}.
\eneq
From (1) above and by \cite[Lemma 1.7]{Phi},
\beq
(d_1-\eta/64)_+\lesssim ((d_0+d_1)-\eta/64)_+\lesssim b.
\eneq
Put $y:=(d-\eta/32)_+\otimes e_{1,1}.$    Then, for all $\tau\in {\widetilde{QT}}(A),$ by  the first inequality in \eqref{LAFF-c-10},
\beq\label{LAFF-c-12}
n d_\tau(y)\le d_\tau((d_1-\eta/64)_+)\le d_\tau(((d_0+d_1)-\eta/64)_+)\le 
d_\tau(b).
\eneq

On the other hand, by (1) and \eqref{LAFF-c-9},
we have
\beq
b\approx_{\eta/32} d_0+\sum_{i=1}^n (d\otimes e_{i,i})\approx_{\eta/32} d_0+\sum_{i=1}^ny\otimes e_{i,i}.
\eneq
It follows that
\beq
\label{TAD-D-F811-1}
(b-\eta/16)_+\lesssim   d_0+\sum_{i=1}^n y\otimes e_{i,i} 
\lesssim s_1\oplus \sum_{i=1}^n y\otimes e_{i,i}.
\eneq
Recall that $(b-\eta/16)_+\in {\rm Ped}(A).$ So 
$d_\tau((b-\eta/16)_+)<\infty$ for all ${\widetilde{QT}}(A).$ 
It follows from  
\eqref{TAD-D-F811-1}, (3), and the choice of $s_1$
 that
\beq
&&\hspace{-0.5in}nd_\tau(y)\ge d_\tau((b-\eta/16)_+)-d_\tau(s_1)
\ge d_\tau((b-\eta/16)_+)-{d_\tau((b-\eta/16)_+)\over{n+1}}\\
&&\ge ({n\over{n+1}})d_\tau((b-\eta/16)_+)\hspace{0.4in} \rforal \tau\in {\widetilde{QT}}(A).
\eneq
In other words,
\beq
\la (b-\eta/16)_+{\widehat{\ra}}\le (n+1)\widehat{y}.
\eneq
By \eqref{TAD-D-F811-3},
$a\lesssim f_{2\ep}(b) 
\lesssim (b-\ep/16)_+.$
It follows that (recall $\eta<\ep$)
\beq\nonumber
\widehat{x'}=\widehat{\la a\ra}\le  \la (b-\eta/16)_+{\widehat{\ra}}\le (n+1)\widehat{y}.
\eneq
Combining this with \eqref{LAFF-c-12}, we conclude that $A$ has property (D) as desired. 
\end{proof}

\begin{rem}\label{RCuntz}
In Theorem \ref{TLAFF}, 
we may write 
$${\rm Cu}(A)=(V(A)\setminus \{0\})\sqcup {\rm LAff}_+({\widetilde{QT}}(A)).$$
Note that,  here,  $0\in  {\rm LAff}_+({\widetilde{QT}}(A))$ is the zero 
element, and,  if $[p]\in V(A)$ and $z\in {\rm LAff}_+({\widetilde{QT}}(A))\setminus \{0\},$ then  
$[p]+z=\widehat{p}+z\in  {\rm LAff}_+({\widetilde{QT}}(A)).$
Moreover,  for 
$x=\la a\ra$ and $y=\la b\ra,$ then $x<y$ if  and only if $\widehat{\la a\ra }<\widehat{\la b\ra }$
(see also Corollary 8.12 of \cite{T20} {{for the unital case}}).
\end{rem}

\section{Stable rank one}

\begin{lem}
\label{spsumnil-lem}
Let $A$ be a \CA, $a,b,c\in A.$ 
Assume $c^n=0$ for some $n\geq 1,$
and  $ab=ac=ca=cb=b^2=0.$
  Then 
  ${\rm sp}((a+b+c)^{n+1})\backslash\{0\}
={\rm sp}(a^{n+1})\backslash\{0\}.$
\end{lem}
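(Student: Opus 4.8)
The plan is to compute the element $(a+b+c)^{n+1}$ explicitly and then reduce the spectral comparison to a cyclic-permutation identity. First I would record which two-letter products vanish: from $ab=ac=ca=cb=b^2=0$, the only adjacent pairs $w_iw_{i+1}$ with $w_i\in\{a,b,c\}$ that can survive in a product-word are $aa$, $ba$, $bc$ and $cc$. Expanding $(a+b+c)^{n+1}=\sum_{w}w$ over all length-$(n+1)$ words $w$ in the letters $a,b,c$, I would phrase the vanishing in terms of the directed graph of admissible transitions: no admissible transition ends in $b$, so $b$ can occur only as the first letter; and once a word reaches $a$ (resp.\ $c$) it must continue with $a$ (resp.\ $c$). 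Hence the only words not killed by a forbidden adjacency are $a^{n+1}$, $c^{n+1}$, $ba^{n}$ and $bc^{n}$. The two words containing $n$ consecutive $c$'s vanish because $c^{n}=0$, leaving
\[
(a+b+c)^{n+1}=a^{n+1}+ba^{n}=(a+b)a^{n}.
\]

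The second step is purely spectral. I would invoke the standard identity ${\rm sp}(pq)\setminus\{0\}={\rm sp}(qp)\setminus\{0\}$, valid for any two elements $p,q$ of a unital Banach algebra, applied to $p=a+b$ and $q=a^{n}$. Since $ab=0$ forces $a^{n}b=a^{n-1}(ab)=0$ for $n\ge 1$, the reverse product simplifies to
\[
a^{n}(a+b)=a^{n+1}+a^{n}b=a^{n+1}.
\]
Combining the two steps then gives
\[
{\rm sp}\big((a+b+c)^{n+1}\big)\setminus\{0\}={\rm sp}\big((a+b)a^{n}\big)\setminus\{0\}={\rm sp}\big(a^{n}(a+b)\big)\setminus\{0\}={\rm sp}(a^{n+1})\setminus\{0\},
\]
which is exactly the asserted equality.

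The only genuinely delicate point is the combinatorial bookkeeping in the first step: one must keep in mind that the hypotheses constrain only \emph{adjacent} letters, so the safe way to organize the argument is via the admissible-transition graph rather than by simplifying products ad hoc. I expect this word analysis to be the main (though elementary) obstacle. The role of the nilpotency $c^{n}=0$ is precisely to annihilate the two surviving $c$-words $c^{n+1}$ and $bc^{n}$, while $b^{2}=0$ is what prevents $b$ from recurring, so that $b$ appears at most once and only in the initial position. Once the identity $(a+b+c)^{n+1}=(a+b)a^{n}$ is established, the conclusion follows immediately from the spectral permutation identity.
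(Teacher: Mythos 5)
Your proof is correct and follows essentially the same route as the paper: both reduce the expansion of $(a+b+c)^{n+1}$ to the two surviving words $a^{n+1}$ and $ba^{n}$ (you via the admissible-transition graph, the paper via a case analysis on the leading letter, which amounts to the same bookkeeping), and both then conclude with the identity ${\rm sp}(pq)\setminus\{0\}={\rm sp}(qp)\setminus\{0\}$ applied to $(a+b)a^{n}$ and $a^{n}(a+b)=a^{n+1}$. No gaps.
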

\begin{proof}
{{We first claim that
$(a+b+c)^{n+1}=a^{n+1}+ba^n.$}}

{{To see this, let $x$ be a non-zero  term}} in 
the expansion of $(a+b+c)^{n+1}.$  
Note that $x$ is a product of factors $a, b,$ and $c.$ 

Case 1:  $x=a\cdot y,$ where $y$ is a product of $n$ elements in $\{a,b, c\}.$
If  $y$ 
has a factor  $b$ or $c,$ 
then $x$ must have a factor $ab$ or $ac.$ 
{{Then,}} by the assumption $ac=ab=0,$   
$x$ would be zero.
Therefore $y$ has no factor $b$ or $c.$ {{Consequently,}}
$x=a^{n+1}.$

Case 2:   $x=b\cdot y.$

\quad Case 2.1: If $y=a\cdot z,$ where $z$ is still a product of elements in $\{a, b, c\}.$ 
Since,  again, 
 $ac=ab=0$, 
$z$ could not have a factor $b$ or $c.$ 
Therefore the only possible  non-zero $x,$ in this case,
must be $ba^{n}.$

\quad Case 2.2:   $y=b\cdot z.$ This actually is impossible, since $b^2=0.$

\quad Case 2.3:   $y=c\cdot z.$
Then, by the assumption, $ca=cb=0,$ 
$z$ could not have a factor $a$ or $b.$
Thus $x=bc^n.$ 
However, by the assumption $c^n=0,$  
Case 2.3 will not occur. 

Case 3:   $x=c\cdot y.$ 
If $y$ contains factor $a$ or $b,$ then $x$ contains 
factor $ca$ or $cb.$
{{Then,}} by the assumption $ca=cb=0,$ we have $x=0.$
Hence $y$ could not contain factor $a$ or $b.$
Hence the only possible  non-zero $x,$ in this case, must be $c^{n+1}.$
However, by the assumption 
$c^{n+1}=0,$ 
this case could not occur. 

Thus, it leaves two terms: $x=a^{n+1}$ and $x=ba^n.$  In other words, 
$(a+b+c)^{n+1}=a^{n+1}+ba^n.$  This proves the claim.

As $a^nb=0$, we see that
\beq
{\rm sp}((a+b+c)^{n+1})\backslash\{0\}
={\rm sp}((a+b)a^n)\backslash\{0\}
={\rm sp}(a^n(a+b))\backslash\{0\}
={\rm sp}(a^{n+1})\backslash\{0\}.
\eneq

\end{proof}

\begin{cor}
\label{Cor-positive-inv}
Let $a,b,c$ be as in Lemma \ref{spsumnil-lem}.
If, in addition, $a$ is positive, then 
$a+b+c$ can be approximated by invertible elements in $\wtd A.$
\end{cor}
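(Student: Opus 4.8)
The plan is to read off the conclusion directly from the spectral identity in Lemma \ref{spsumnil-lem}, combined with the elementary principle that an element whose spectrum misses a sequence tending to $0$ is automatically a norm-limit of invertibles. Write $y:=a+b+c$ and work throughout in the unital algebra $\widetilde A$. First I would use positivity of $a$: since $a\ge 0$ we have $a^{n+1}\ge 0$, so ${\rm sp}(a^{n+1})\subseteq[0,\infty)$ and hence ${\rm sp}(a^{n+1})\setminus\{0\}\subseteq(0,\infty)$. By Lemma \ref{spsumnil-lem},
\[
{\rm sp}(y^{n+1})\setminus\{0\}={\rm sp}(a^{n+1})\setminus\{0\}\subseteq(0,\infty),
\]
and therefore ${\rm sp}(y^{n+1})\subseteq[0,\infty)$.

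Next I would transfer this information from $y^{n+1}$ back to $y$ via the spectral mapping theorem applied to the polynomial $t\mapsto t^{n+1}$ in $\widetilde A$, giving ${\rm sp}(y^{n+1})=\{\lambda^{n+1}:\lambda\in{\rm sp}(y)\}$. Combined with the previous step, every $\lambda\in{\rm sp}(y)$ satisfies $\lambda^{n+1}\in[0,\infty)$. Writing $\lambda=re^{i\theta}$, this forces $(n+1)\theta\equiv 0\ (\mathrm{mod}\ 2\pi)$, so ${\rm sp}(y)$ is contained in the union of the $n+1$ rays $\{re^{2\pi i k/(n+1)}:r\ge 0\}$ for $k=0,1,\dots,n$. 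This is a finite union of rays through the origin, hence a closed set with empty interior whose complement accumulates at $0$.

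Finally I would pick an explicit sequence tending to $0$ off all these rays, for instance $\lambda_m:=\tfrac{1}{m}e^{i\pi/(n+1)}$, for which $\lambda_m^{n+1}=-1/m^{n+1}<0$, so $\lambda_m\notin{\rm sp}(y)$. Thus $y-\lambda_m\cdot 1\in GL(\widetilde A)$ for every $m$, while $\|y-(y-\lambda_m\cdot 1)\|=|\lambda_m|=1/m\to 0$. Hence $a+b+c$ lies in $\overline{GL(\widetilde A)}$, which is exactly the assertion.

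I do not expect any serious obstacle here, as the estimates are routine once the idea is in place. The only steps needing care are the passage from control of ${\rm sp}(y^{n+1})$ to control of ${\rm sp}(y)$ (via spectral mapping and the extraction of $(n+1)$-th roots, which confines ${\rm sp}(y)$ to finitely many rays) and the resulting observation that such a spectrum cannot contain a neighborhood of $0$; this is precisely what makes $0$ approachable by resolvent values and delivers the invertible approximants.
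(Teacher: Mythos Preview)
Your argument is correct and follows essentially the same route as the paper's own proof: use Lemma \ref{spsumnil-lem} together with positivity of $a$ to force ${\rm sp}((a+b+c)^{n+1})\subset[0,\infty)$, then apply the spectral mapping theorem to confine ${\rm sp}(a+b+c)$ to $n+1$ rays through the origin, so that $0$ is not an interior point of the spectrum and invertible approximants exist. The only difference is cosmetic---you exhibit an explicit sequence $\lambda_m$ in the resolvent tending to $0$, whereas the paper simply invokes the fact that $0$ is not interior to the spectrum.
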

\begin{proof}
By Lemma \ref{spsumnil-lem} and 
the fact that $a\in A_+,$
we have 
${\rm sp}((a+b+c)^{n+1})\backslash\{0\}
={\rm sp}(a^{n+1})\backslash\{0\}\subset \mathbb{R}_+.$
By {{the}} spectral mapping theorem, 
${\rm sp}(a+b+c)\backslash\{0\}$ lies in the union of $n+1$ rays, 
which is
\beq
\{e^{t+2ik\pi /(n+1)}:t\in\mathbb{R}, k=1,...,n+1\}.
\eneq
Hence $0$ is not an {{interior}}  point of 
${\rm sp}(a+b+c)\cup\{0\}.$
Therefore $a+b+c$ can be approximated by invertible elements.
\end{proof}


Recall the the definition of continuous scale from 
\cite{Lin1991}. 

\begin{df}
\cite[Definition 2.5]{Lin1991}
Let $A$ be a 
$\sigma$-unital, non-unital and 
non-elementary 
simple
\CA. 
$A$ {{is said to have}} continuous scale, 
if there is an {{approximate}} identity 
$\{e_n\}$
such that,
for any 
$a\in A_+\backslash\{0\},$
there is $N\in\mathbb{N},$
such that for any $n>m\geq N,$
$e_n-e_m\lesssim a.$
\end{df}

Note that, by Theorem 2.8 of \cite{Lin1991} and Theorem 2.4 of \cite{Lin04}, $A$ has continuous scale if and only if the corona algebra $M(A)/A$ is 
simple. 
It also follows from Theorem 2.4 of \cite{Lin04} that 
if $A$ has continuous scale, then any approximate 
identity $\{e_n\}$ with $e_{n+1}e_n=e_ne_{n+1}=e_n$ for all $n\in \N$ has the property that, for any $a\in  A_+\setminus \{0\},$
there exists $N\ge 1$ such that, for all $m>n\ge N,$ $e_m-e_n\lesssim a.$ 

Moreover, 
for a $\sigma$-unital non-elementary
simple exact \CA\, $A$ with
$T(A)\not=\emptyset,$  
if $A$ has strict comparison,
then 
$A$ has continuous scale if and only if 
 $T(A)$ is compact (see  Proposition 5.4 of \cite{EGLN}, see also the proof of Theorem 5.3  of \cite{EGLN}, and, 
 an early version, Proposition 2.2 of \cite{Lin04}).  See also the third paragraph of the proof of Corollary \ref{Ctadsr1}.

\begin{thm}
\label{TAD-TSR1}
Let $A$ be a  
$\sigma$-unital  projectionless simple 
\CA\ {{with}} continuous scale. 
Suppose that,
for every $\sigma$-unital hereditary $C^*$-subalgebra $B\subset A,$
any non-invertible element in $B$ can be approximated (in norm) by products of two 
nilpotent elements in $B.$ 
Then $A$ has stable rank one. 
\end{thm}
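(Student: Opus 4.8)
The plan is to verify the definition of stable rank one directly, showing that $\overline{GL(\widetilde{A})}=\widetilde{A}.$ Since $\overline{GL(\widetilde{A})}$ is norm closed and invariant under multiplication by a nonzero scalar, and every element of $\widetilde{A}$ has the form $\lambda\cdot 1+a$ with $\lambda\in\C$ and $a\in A,$ it suffices to treat the case $\lambda=0$ (that is, $a\in A$) and, after multiplying by a scalar of modulus one, the case $\lambda=1$ (that is, $1+a$). The first case will come cheaply from the hypothesis, while the second is the substantial one and is where Lemma \ref{spsumnil-lem}, Corollary \ref{Cor-positive-inv} and the continuous scale assumption are used.

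For $a\in A,$ I would first record the elementary fact that every nilpotent $n$ lies in $\overline{GL(\widetilde{A})}$: since $\mathrm{sp}_{\widetilde{A}}(n)=\{0\},$ the element $n+\dt\cdot 1$ is invertible in $\widetilde{A}$ for each $\dt>0$ and tends to $n.$ Hence a product $y=mn$ of two nilpotents satisfies $y=\lim_{\dt\to 0^+}(m+\dt)(n+\dt)\in\overline{GL(\widetilde{A})}.$ Applying the standing hypothesis to $B=A$ itself (a $\sigma$-unital hereditary \SCA\ of $A$), every $a\in A$ is a norm limit of such products, so $A\subset\overline{GL(\widetilde{A})}.$ When $A$ is unital, running the same argument inside $A$ (using $1_A$ in place of $1_{\widetilde{A}}$) gives $\overline{GL(A)}=A,$ which is exactly the assertion that $A$ has stable rank one; this disposes of the unital case entirely, without any appeal to Corollary \ref{Cor-positive-inv}.

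It remains to prove $1+a\in\overline{GL(\widetilde{A})}$ when $A$ is non-unital, $\sigma$-unital and has continuous scale. Fix $\ep>0.$ Using an approximate identity $\{e_k\}$ provided by the continuous scale, with $e_{k+1}e_k=e_k,$ I would localize the failure of invertibility of $1+a$ to a corner: for $m<k$ chosen large, $1+a$ lies within $\ep$ of $(1-e_k)+e_k(1+a)e_k,$ where $e_k(1+a)e_k\in A$ is essentially supported in $\overline{e_mAe_m},$ while the collar satisfies $e_k-e_m\lesssim s$ for any prescribed $s\in A_+\setminus\{0\}.$ The positive block $(1-e_k),$ together with the positive radial part of the corner element, will serve as the positive summand $a'$ in a decomposition $a'+b+c$ of the kind handled by Corollary \ref{Cor-positive-inv}; thus $1+a$ already agrees, up to $\ep,$ with an element invertible away from a Cuntz--small piece. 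The true obstruction is the index of $1+a,$ i.e.\ the mismatch between its kernel and cokernel projections, both of which now live in the corner. As $A$ is stably finite under these hypotheses, these defects have equal size; the room afforded by the Cuntz--small collar, together with the product-of-two-nilpotents hypothesis applied to the collar's hereditary \SCA, then lets me manufacture a connecting square--zero element $b$ (and, if a longer staircase is needed, a nilpotent $c$) that cancels this index. After adjusting by functional calculus so that $a'b=a'c=ca'=cb=b^2=0$ with $c$ nilpotent, Corollary \ref{Cor-positive-inv} yields $a'+b+c\in\overline{GL(\widetilde{A})};$ since $a'+b+c$ is within $\ep$ of $1+a,$ letting $\ep\to 0$ finishes this case.

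The main obstacle is precisely this last construction: extracting from the hypothesis the connecting nilpotent and square--zero elements that exactly cancel the kernel--cokernel defect of $1+a,$ and upgrading the only approximate orthogonality available (the algebra may be projectionless, so the cut--offs $e_k$ are not projections) to the exact relations demanded by Corollary \ref{Cor-positive-inv}. This is where the continuous scale is indispensable: it simultaneously forces the stable finiteness that makes the two defects match and guarantees that the collar $e_k-e_m$ is Cuntz--small, so that there is enough space to house the connectors while perturbing $1+a$ by less than $\ep.$
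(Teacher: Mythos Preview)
Your reduction to the two cases $\lambda=0$ and $\lambda=1$, and your handling of the $\lambda=0$ case and the unital case, are correct and agree with the paper.

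The non-unital $1+a$ case, however, has a genuine gap. The language of ``index of $1+a$'' and ``kernel and cokernel projections'' is misplaced: by this point $A$ is projectionless, so there are no such projections to match, and there is no Fredholm-type index being computed anywhere in the argument. More seriously, you never produce the one ingredient that makes the orthogonality relations in Corollary~\ref{Cor-positive-inv} attainable: a \emph{nonzero positive element orthogonal on both sides} to (a unitary translate of) $1+a$. The paper gets this from R{\o}rdam's zero-divisor technique (Proposition~3.2 and Lemma~3.5 of \cite{Rordam-1991-UHF}): after a small perturbation inside $\widetilde B$ one may assume $y_1+\lambda_1$ is a two-sided zero divisor, and then there exist a unitary $u\in\widetilde B$ and $a\in B_+\setminus\{0\}$ with $a\,u(y_1+\lambda_1)=u(y_1+\lambda_1)\,a=0$. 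Writing $u(y_1+\lambda_1)=x_1+\eta$ and normalizing to $x+1$, one now has $a(x+1)=(x+1)a=0$. Without this step there is no reason the pieces you assemble can be made to satisfy $a'b=a'c=ca'=cb=0$ exactly, and your own final paragraph concedes that you only have approximate orthogonality.

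You also apply the nilpotent-product hypothesis in the wrong place. It is not used on the collar; the collar $e_0-e_1$ is controlled by the continuous scale via $e_0-e_1\lesssim a$, which yields $(e_0-e_1-\ep)_+=r^*a^2r$ and hence the square-zero element $ar$. The hypothesis is instead applied to the hereditary subalgebra $D=\mathrm{Her}_A(e_1(x+1))$, which sits inside $\{a\}^\perp$, to write $e_1(x+1)\approx s_1s_2$ with $s_1,s_2$ nilpotent in $D$. With the exact decomposition $x+1=(1-e_0)+(e_0-e_1)+e_1(x+1)$ (arranged by first placing $x$ in a smaller $\mathrm{Her}(e^{(k_0)})$ so that $e_1x=x$ exactly), one obtains the factorization
\[
x+1\ \approx\ \bigl((1-e_0)^{1/2}+r^*a+s_1\bigr)\bigl((1-e_0)^{1/2}+ar+s_2\bigr),
\]
and Corollary~\ref{Cor-positive-inv} is applied to \emph{each factor} separately. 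Your sketch aims instead for a single decomposition $a'+b+c$ of $1+a$ itself, which cannot work directly since $1+a$ has no reason to be close to such a sum with $a'$ positive.
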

\begin{proof}


Let $\{e^{(k)}\}$ be an approximate identity of $A$
such that $e^{(k+1)}e^{(k)}=e^{(k)}e^{(k+1)}
=e^{(k)}.$ By passing to a subsequence, \wilog, we may assume 
$e^{(k_2)}-e^{(k_1)}\neq 0$ 
for all $k_2 > k_1\in \mathbb{N}.$
Let $\wtd A$ be the 
unitization of $A$ {{and}}
$y+\lambda\in \wtd A,$  {{where $y\in A$ and 
$\lambda\in \C.$}}

Fix $\dt\in (0,1).$ To show $A$ has stable rank one, it is 
suffices to show the following:

Goal:  there is $z\in GL(\wtd A)$ such that
\beq\label{Lsr1-1}
\|(y+\lambda)-z\|<\dt.
\eneq



To achieve the goal, we first choose $k_0\in\mathbb{N}$ such that
$
e^{(k_0)}ye^{(k_0)}
{{\approx_{\dt/4}}} y.
$
Put 
$B:={\rm Her}_A(e^{(k_0)})$ 
and
$y_0=e^{(k_0)}ye^{(k_0)}\in B.$
Then
\beq
\label{TAD-TSR-807-1}
\|(y_0+\lambda)-(y+\lambda)\| <\dt/4.
\eneq

Note, since $A$ has no non-zero projection, $B$ is non-unital. 
 Put $A_1:=\C \cdot 1_{{{\wtd A}}}+B\cong \wtd B.$  
 Let $v\in A_1$ be such that $v^*v=1.$ 
Then $1-vv^*\in A$ is a projection. 
Since  $A$ has no non-zero projections, $v^*v-vv^*=0.$
In other words, {{$A_1$}} is finite and hence
every one-sided invertible element in 
$A_1$ is invertible. 
%
If $y_0+\lambda\in GL(A_1)\subset GL(\wtd A),$
 then, by \eqref{TAD-TSR-807-1}, 
 our goal is achieved. 
 So we assume that $y_0+\lambda \not\in GL(A_1).$ 
 By  \cite[Proposition 3.2]{Rordam-1991-UHF}, 
 there is a two-sided zero {{divisor}} $y_1+\lambda_1\in A_1$
 such that $\|y+\lambda-(y_1+\lambda_1)\|<\dt/4,$  {{for some  $y_1\in B$ and $\lambda_1\in \C.$}}
Therefore, to achieve the goal above, 
it is suffices to 
show  that
$y_1+\lambda_1\in\overline{GL(\wtd A)}.$ 


By  \cite[Lemma 3.5]{Rordam-1991-UHF}, 
 working in $A_1\cong \wtd B,$ 
 we 
can find 
 $a_1\in {A_1}_+\setminus \{0\}$  
 and 
 a unitary $u\in A_1$ such that 
 \beq
 \label{TAD-SR1-806-1}
 a_1u(y_1+\lambda_1)=u(y_1+\lambda_1)a_1=0.
 \eneq
Since $B$ is an essential ideal of $A_1,$ 
there is $b\in {B}_+$ such that $a_1ba_1\not=0.$ 
 Put $a=a_1ba_1\in B,$
and we may assume $\|a\|=1.$ 
 We write $u(y_1+\lambda_1)=x_1+\eta,$ where $x_1\in B$ and 
 $|\eta|=|\lambda_1|.$
{{Since $u$ is invertible, to show $y_1+\lambda_1\in\overline{GL(\wtd A)},$
 it suffices to show that $x_1+\eta\in \overline{GL(\wtd A)}.$}}
 
If $\eta=0,$ then $\lambda_1=0,$ 
and then $y_1+\lambda_1=y_1\in B\subset A.$ 
{{By}} our assumption, $y_1$ 
can be approximated by products of two nilpotent elements in $A,$
which 
can be approximated by invertible elements in $\wtd A. $ 
Thus we may assume that $\eta\neq 0.$

Put $x=\frac{x_1}{\eta}\in \Her_A(e^{(k_0)}).$
If $x+1=\frac{x_1}{\eta}+1\in \overline{GL(\wtd A)},$
then $x_1+\eta\in\overline{GL(\wtd A)}.$
 
Hence,
it is suffices to 
show   that  $x+1\in\overline{GL(\wtd A)}.$ 


 



To do that, let us fix $\ep>0.$
As $A$ has continuous scale, we may 
choose $e_0, e_1,   
\in A_+^{\bf 1}$ 
of the form $e^{(k)}$ with $k>k_0,$
such that $e_0 e_1=  e_1=e_1e_0,$  
$e_0-e_1\lesssim a.$
Note, since 
$x\in \Her_A(e^{(k_0)}),$ 
and $e_1$ is of the form 
$e^{(k)}$ with $k>k_0,$
we also have 
\beq
e_1 x=x=xe_1\,\,\,
{\text{and,\,\,\,hence}}\,\,\, e_1(x+1)=(x+1)e_1.
\eneq
Note, by \eqref{TAD-SR1-806-1} and the choice of $a$ and $x+1,$
we have $a(x+1)=(x+1)a=0.$
We also have
\beq
x+1&=&
(1- e_0)+( e_0- e_1)+ e_1(x+1).
\label{TAD-TSR1-6}
\eneq
%
Since $e_0-e_1 \lesssim a\sim a^2,$ there is $r\in A$ such that 
\beq
\label{TAD-TSR1-1}
(e_0-e_1-\ep)_+= r^*a^2r.
\eneq
Note that, since $(e_0-e_1)e^{(k_0)}=e^{(k_0)}(e_0-e_1)=0,$
and $a\in \Her_A(e^{(k_0)}),$
we have 
\beq
\label{TAD-TSR1-5-1}
(ar)^2=0.
\eneq

Let $C=\{z\in A: za=az=0\}.$ Then $C$ is a hereditary 
{{\SCA\,}} of $A.$
{{Since}} $e_1$ commutes with $x+1$ and $a(x+1)=(x+1)a=0,$ 
we have 
$ae_1(x+1)=a(x+1)e_1=0,$
and $e_1(x+1)a=0.$
Thus $e_1(x+1)\in C.$ 
Let $D=\Her_A (e_1(x+1))\subset C,$
{{which is a $\sigma$-unital hereditary $C^*$-subalgebra.}} 
Note that, since $D$ is  projectionless, $e_1(x+1)$ is not invertible in $D.$
By the assumption of the theorem, 
there are two nilpotent elements $s_1,s_2\in D$
such that 
\beq
\label{TAD-TSR1-2}
s_1s_2\approx_\ep e_1(x+1).
\eneq
Since $e_0e_1=e_1,$ and $D\subset \Her_A(e_1),$
we have 
\beq
\label{TAD-TSR1-4}
s_1(1-e_0)=(1-e_0)s_1=0=
(1-e_0)s_2=s_2(1-e_0).
\eneq
Also note that 
(recall, $s_1,s_2\in D\subset C=\{a\}^{\bot}$)
\beq
\label{TAD-TSR1-3}
{{
(1-e_0)a=0=a(1-e_0),  
\quad
as_1=s_1a=0,
\quad 
as_2=s_2a=0.}} 
\eneq
Then 
by \eqref{TAD-TSR1-6}, 
\eqref{TAD-TSR1-1}, 
\eqref{TAD-TSR1-2}, 
\eqref{TAD-TSR1-4}, 
and \eqref{TAD-TSR1-3}, 
\beq
x+1
&=&
(1-e_0)+(e_0-e_1)+e_1(x+1)
\\
&\approx_{2\ep}&
(1-e_0)+(e_0-e_1-\ep)_++s_1s_2
\\&=&
(1-e_0)+r^*a^2r+s_1s_2
\\&=&
((1- e_0)^{1/2}+r^*a+s_1)((1- e_0)^{1/2}+ar+s_2).
\label{decommul-1}
\eneq

Let $\alpha=(1-e_0)^{1/2},$
$\beta=ar,$ $\gamma =s_2.$
Then 
$(1- e_0)^{1/2}+ar+s_2=\alpha+\beta+\gamma.$
{{Note that}}
$\alpha$ is a positive element, 
$\gamma$ is a nilpotent element, 
and by \eqref{TAD-TSR1-3}, 
$\alpha\beta=\gamma\beta=0,$
by \eqref{TAD-TSR1-4},
$\alpha\gamma=\gamma\alpha=0,$
by \eqref{TAD-TSR1-5-1}, 
$\beta^2=0.$
Then by Lemma \ref{spsumnil-lem} and Corollary 
\ref{Cor-positive-inv}, 
$\alpha+\beta+\gamma=(1- e_0)^{1/2}+ar+s_2$ can be approximated by invertible elements in $\wtd A.$

{{The same}} argument 
also holds for 
$(1- e_0)^{1/2}+ar+s_1^*=((1- e_0)^{1/2}+r^*a+s_1)^*.$ {{Thus}} 
$(1- e_0)^{1/2}+r^*a+s_1$ also can be approximated by invertible elements in $\wtd A.$
{{Then  by \eqref{decommul-1},  we obtain 
$z'\in GL(\td A)$ such that
$\|(x+1)-z'\|<2\ep.$
Since $\ep$ is arbitrary,   this implies that
\beq
x+1\in {{\overline{GL(\wtd A)}}}
\eneq
 as desired.}}
Therefore  $A$ has stable rank one. 
\end{proof}

We have the following dichotomy 
 for
separable simple 
tracially approximately divisible $C^*$-algebras.

\begin{cor}\label{Ctadsr1}
Let $A$ be a 
separable simple 
\CA\  which is tracially approximately divisible.
Then either $A$ is purely infinite or 
$A$ has stable rank one.
\end{cor}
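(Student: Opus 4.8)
The plan is to funnel the statement into the two engines already built, namely Theorem \ref{TAD-SC} and Theorem \ref{TAD-TSR1}. First, if $A$ is elementary then $A$ is a matrix algebra or $\cong {\cal K},$ which has stable rank one, so there is nothing to prove; hence I may assume $A$ is non-elementary. By Theorem \ref{TTAD==}, $A$ has property (TAD), so Theorem \ref{TAD-SC} shows that $A$ is either purely infinite---which is the first alternative---or has strict comparison. Assume from now on that $A$ has strict comparison and is not purely infinite; then, by Corollary 5.1 of \cite{Rordam-2004-Jiang-Su-stable} (see also Proposition 4.9 of \cite{FLII}), $A$ is stably finite and ${\wtd{QT}}(A)\neq\{0\}.$

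Next I would record the input needed to invoke Theorem \ref{TAD-TSR1}. Every non-zero $\sigma$-unital hereditary \SCA\ $B\subset A$ is again simple, separable and non-elementary: a minimal projection $p$ of $B$ would satisfy $pAp=pBp=\C p$ (by heredity of $B$), making $A$ elementary. By Theorem 5.5 of \cite{FLII} together with Theorem \ref{TTAD==}, such a $B$ is tracially approximately divisible and has property (TAD); being finite, the proof of \cite[Theorem 5.7]{FLII} shows that every element of $B$ is a norm-limit of products of two nilpotent elements of $B.$ Thus the nilpotent-factorization hypothesis of Theorem \ref{TAD-TSR1} holds for $A$ and for each of its $\sigma$-unital hereditary \SCA s. In particular, if $A$ is unital the conclusion is now immediate from Corollary \ref{SC-TSR}.

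So suppose $A$ is non-unital. Since $A$ is $\sigma$-unital, simple, non-elementary, with strict comparison and ${\wtd{QT}}(A)\neq\{0\},$ I would write $A=\overline{\bigcup_n B_n}$ with $B_n=\Her_A(e_n)$ for an approximate identity $\{e_n\}$ chosen so that each $B_n$ has continuous scale; the existence of such an approximate identity is the continuous-scale theory of \cite{Lin1991} and \cite{EGLN} (cf.\ the discussion following the definition of continuous scale above, where the normalized set $T_e$ is compact). Given $y+\ld\in\wtd A$ with $y\in A,\ \ld\in\C,$ and $\dt>0,$ choose $n$ with $\|y-e_nye_n\|<\dt$ and set $B=B_n.$ Then $B$ is simple, separable, non-elementary, $\sigma$-unital with continuous scale, and satisfies the nilpotent-factorization hypothesis, so Theorem \ref{TAD-TSR1} gives that $B$ has stable rank one, i.e.\ $\wtd B=\overline{GL(\wtd B)}.$ Since $e_nye_n\in B$ we get $e_nye_n+\ld\in\wtd B=\overline{GL(\wtd B)};$ and because $\wtd B\subset\wtd A$ is a unital inclusion, any element invertible in $\wtd B$ is invertible in $\wtd A,$ whence $\overline{GL(\wtd B)}\subset\overline{GL(\wtd A)}.$ Therefore $\dist(y+\ld,\overline{GL(\wtd A)})\le\|y-e_nye_n\|<\dt;$ letting $\dt\to0$ yields $y+\ld\in\overline{GL(\wtd A)}.$ As $y+\ld$ was arbitrary, $\wtd A=\overline{GL(\wtd A)},$ so $A$ has stable rank one.

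The step I expect to be the main obstacle is the non-unital reduction: one must know that an arbitrary element of $A$ can be captured, up to arbitrarily small error, inside a hereditary \SCA\ with continuous scale. This is exactly where the continuous-scale results of \cite{Lin1991} and \cite{EGLN} and the compactness of $T_e$ are essential; everything else is a bookkeeping assembly of Theorems \ref{TTAD==}, \ref{TAD-SC}, \ref{TAD-TSR1} and Corollary \ref{SC-TSR}.
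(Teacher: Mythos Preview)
Your proposal is correct and follows essentially the same route as the paper: reduce to the non--purely-infinite stably finite case via Theorem~\ref{TAD-SC} and \cite{Rordam-2004-Jiang-Su-stable}, use Theorem 5.5 and the proof of Theorem 5.7 of \cite{FLII} to get the nilpotent-factorization hypothesis in every hereditary \SCA, and then invoke Theorem~\ref{TAD-TSR1}. The only noteworthy difference is in the non-unital reduction step. The paper does a two-step descent---first into $B=\Her(e_n)$ for some $e_n\in{\rm Ped}(A)$, then, using Lemma 7.2 and Proposition 5.4 of \cite{EGLN}, into a smaller $C=\overline{eAe}\subset B$ which is shown to have continuous scale---whereas you assert in one stroke that an approximate identity $\{e_n\}$ exists with each $\Her_A(e_n)$ of continuous scale. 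Your assertion is true, but neither \cite{Lin1991} nor \cite{EGLN} states it in that packaged form; its justification is exactly the paper's two-step construction (Lemma 7.2 of \cite{EGLN} produces $e$ with $d_\tau(e)$ continuous, and Proposition 5.4 of \cite{EGLN} turns that into continuous scale). So the difference is presentational, not substantive.
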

\begin{proof}
We assume that $A$ is not purely infinite. By Theorem \ref{TAD-SC} and Corollary  5.1 of 
{{\cite{Rordam-2004-Jiang-Su-stable}}}
(see {{also}} Proposition 4.9 
of \cite{FLII}), $A$ is stably finite. 
So from now on we will assume that $A$ is stably finite.

We will 
use the fact that every hereditary \SCA\, of $A$  is tracially approximately divisible
(by  Theorem 5.5 of \cite{FLII}). 
Suppose that  $A$ contains a non-zero projection $p,$ 
then  the unital hereditary $C^*$-subalgebra 
$pAp$ has stable rank one, by Corollary \ref{SC-TSR}. 
It follows that $A$ also has stable rank one. 

We now assume that $A$ is projectionless.

By Theorem \ref{TLAFF}, we can choose 
 $e\in {\rm Ped}(A)_+\setminus\{0\}$ such that $\widehat{\la e\ra}$ is continuous on ${\widetilde{QT}}(A).$
Let $A_0=\overline{eAe}$ and 
let 
$QT(A_0)=\{\tau\in {\widetilde{QT}}(A): d_\tau(e)=1\}.$   
Recall  that $\widehat{\la e\ra}(=d_\tau(e))$ is continuous on ${\widetilde{QT}}(A).$ 
It follows that $QT(A_0)$ is compact (see also the last paragraph  of   Definition \ref{Dqtr}). 
We claim that $A_0$ has continuous scale (see the proof of Proposition 5.4 of \cite{EGLN}).  
Indeed, let $\{e_n\}$ be an approximate identity with property that $e_{n+1}e_n{{=e_ne_{n+1}}}=e_n.$
Then ${\widehat{\la e_n\ra}}$ converges uniformly on $QT(A_0).$   
Therefore, by strict comparison,
for any $a\in A_+\setminus \{0\},$ there exists $N\ge 1$ such that, 
for all $m>n\ge N,$
\beq
e_m-e_n\lesssim a.
\eneq
This proves the claim. 

Next we claim that, by the proof of \cite[Theorem 5.7]{FLII},
every element in a projectionless simple \CA\, which is tracially approximately divisible can be approximated by 
the products of two nilpotent elements.  
To see this,  let $x'\in A.$ 
Since $A$ is a 
non-unital   separable \CA, 
for any $\ep>0,$ there are $a\in A_+\setminus \{0\}$ and $x\in A$ such that
$x\approx_\ep x'$ and $ax=xa=0.$
It then suffices to show that $x$ can be approximated by products of two nilpotents.
Then the proof of 
 \cite[Theorem 5.7]{FLII} from  the second paragraph can be applied. 
 Note that   in the last estimate (e 5.29)  at the end of that proof,  $v$ and $w$ are nilpotents.
 This proves the claim. 
 
As we pointed out at the beginning of the proof,  
every hereditary \SCA\, of $A_0$ is tracially approximately
divisible. So Theorem \ref{TAD-TSR1} implies that $A_0$ has stable rank one. 
By Theorem  3.6 of \cite{Rie83},   $A_0\otimes {\cal K}$ has stable rank one, so does $A\otimes {\cal K}$ 
(by \cite{Br}).  It follows from Corollary 3.6  of \cite{BP}, $A$ itself has stable rank one.
\end{proof}

\begin{df}\label{DTrapp}
Recall  {{from  Definition 3.1 of \cite{FLII}}}  that a simple \CA\,  
$A$
is essentially tracially in the class of 
${\cal Z}$-stable \CAs,
if for any finite subset ${\cal F}\subset A,$ any $\ep>0,$
any  
$s\in A_+\setminus \{0\},$  
 there exist an element $e\in A_+^{\bf 1}$ and a non-zero  
 \SCA\ $B$ of $A$
 which is in ${\cal Z}$-stable,  such that

(1) 
$\|ex-xe\|<\ep\rforal {{x}}  
\in {\cal F},$

(2)  $(1-e)x\in_{\ep} B$   
and
$\|(1-e)x\|\ge \|x\|-\ep$ for all $x\in {\cal F},$  and

(3)  $e\lesssim s.$

\end{df}

\begin{thm}\label{TTZsr1}
Let $A$ be a separable simple \CA\, which is {{essentially}} tracially in the class of ${\cal Z}$-stable \CA s.
Then $A$ is purely infinite, or $A$ has stable rank one and ${\rm Cu}(A)=(V(A)\setminus \{0\})\sqcup \LAff_+({\widetilde{QT}}(A)).$ 
\end{thm}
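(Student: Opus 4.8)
The plan is to assemble the results already proved for tracially approximately divisible algebras. First I would invoke \cite[Theorem 5.9]{FLII}: since $A$ is separable, simple, and essentially tracially in the class of ${\cal Z}$-stable \CA s in the sense of Definition \ref{DTrapp}, $A$ is tracially approximately divisible. Next I would record that $A$ must be non-elementary. Indeed, if $A$ were elementary it would be isomorphic to ${\cal K}$ or to some $M_n,$ and then every \SCA\ of $A$ would be type I; but Definition \ref{DTrapp} supplies, for appropriate data, a non-zero ${\cal Z}$-stable \SCA\ $B\subset A,$ and a non-zero ${\cal Z}$-stable \CA\ is never type I (a non-zero type I \CA\ has a non-zero elementary quotient, so $B\cong B\otimes{\cal Z}$ would force ${\cal Z}$ to be a quotient of a type I algebra, hence type I, a contradiction). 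Thus $A$ is separable, simple, non-elementary and tracially approximately divisible, so Theorem \ref{TTAD==}, Theorem \ref{TAD-SC}, Corollary \ref{Ctadsr1} and Theorem \ref{TLAFF} all apply.

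By Corollary \ref{Ctadsr1}, $A$ is either purely infinite or has stable rank one. In the purely infinite case there is nothing further to prove. So I would assume $A$ is not purely infinite; then $A$ has stable rank one and, exactly as at the start of the proof of Theorem \ref{TLAFF} (via Corollary 5.1 of \cite{Rordam-2004-Jiang-Su-stable}, equivalently Proposition 4.9 of \cite{FLII}), $A$ is stably finite and hence has no infinite projections.

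It then remains only to identify ${\rm Cu}(A).$ Here I would use the structure of the Cuntz semigroup of a stably finite simple \CA\ recalled in the remark preceding Theorem \ref{TLAFF} (part (iv) of Proposition 6.4 of \cite{ERS}): because $A$ has no infinite projections, every element of ${\rm Cu}(A)$ is either represented by a projection or is purely non-compact, and these two possibilities are mutually exclusive. The classes represented by projections constitute exactly $V(A),$ while Theorem \ref{TLAFF} gives an ordered-semigroup isomorphism of the purely non-compact part of ${\rm Cu}(A)$ with $\LAff_+({\widetilde{QT}}(A)).$ Combining these two descriptions yields ${\rm Cu}(A)=V(A)\sqcup \LAff_+({\widetilde{QT}}(A)),$ which finishes the proof.

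I expect the work to be organizational rather than analytic, since strict comparison, stable rank one, and surjectivity onto $\LAff_+$ are already established in the cited results. The point requiring care is that the splitting of ${\rm Cu}(A)$ into projection classes and purely non-compact elements is a genuine disjoint union compatible with the semigroup and order structure — in particular checking the boundary behaviour at the zero element, verifying that $V(A)$ (defined through $W(A)$) really exhausts the projection classes in ${\rm Cu}(A)=W(A\otimes{\cal K}),$ and confirming that the isomorphism of Theorem \ref{TLAFF} glues correctly to the identity on $V(A).$
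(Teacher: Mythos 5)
Your proposal is correct and follows the same route as the paper, which simply cites Theorem 5.9 of \cite{FLII} to obtain tracial approximate divisibility and then invokes Theorem \ref{TLAFF} and Corollary \ref{Ctadsr1}. The extra details you supply (the check that $A$ is non-elementary, needed for Theorem \ref{TLAFF}, and the splitting of ${\rm Cu}(A)$ into projection classes and purely non-compact elements via Proposition 6.4 of \cite{ERS}) are exactly the implicit steps the paper leaves to the reader.
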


\begin{proof}
It follows from Theorem 5.9 of \cite{FLII} that $A$ is tracially approximately divisible.
Then, by Theorem \ref{TLAFF} and Corollary \ref{Ctadsr1}, {{$A$ is purely infinite, or has stable rank one,}} and 
 ${\rm Cu}(A)=(V(A)\setminus \{0\})\sqcup \LAff_+({\widetilde{QT}}(A)).$
\end{proof}

R\o rdam showed that every unital simple ${\cal Z}$-stable \CA\, is either purely infinite, or 
has stable rank one (see \cite{Rordam-2004-Jiang-Su-stable}).  In \cite{Rlz}, L. Robert showed that every stably projectionless simple ${\cal Z}$-stable 
\CA\, has almost stable rank one and left open whether it actually has stable rank one.
The following corollary answers his question affirmatively.

\begin{cor}
\label{Cor-Z-stable-F817}
Let $A$ be a simple ${\cal Z}$-stable  \CA. Then $A$ is either purely infinite or has stable rank one.
\end{cor}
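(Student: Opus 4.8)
The plan is to split on the standard dichotomy and reduce the general (possibly non-separable) case to the separable case already treated in Theorem \ref{TTZsr1} and Corollary \ref{Ctadsr1}. If $A$ is purely infinite there is nothing to prove, so I would assume from now on that $A$ is \emph{not} purely infinite and aim to show that $A$ has stable rank one, i.e.\ that every $y+\lambda\in \td A$ (with $y\in A,$ $\lambda\in \C$) can be approximated in norm by elements of $GL(\td A).$ Fix such a $y+\lambda$ and an $\ep>0.$ Since $A$ is ${\cal Z}$-stable I would fix an isomorphism $A\cong A\otimes {\cal Z}$ and identify $A$ with $A\otimes {\cal Z},$ so that $y$ may be approximated within $\ep/2$ by a finite sum $\sum_i a_i\otimes z_i$ with $a_i\in A$ and $z_i\in {\cal Z}.$

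Next I would carry out a separabilization. Let $S\subset A$ be the separable \SCA\ generated by the finitely many $a_i,$ so that $\sum_i a_i\otimes z_i\in S\otimes {\cal Z}.$ Using Lemma~3.7 of \cite{EGLN-KK=0} repeatedly (building an increasing tower of separable subalgebras each full in the next, as in the suppressed Lemma on simple subalgebras), I would produce a separable simple \SCA\ $C\subset A$ with $S\subset C.$ Then $C_1:=C\otimes {\cal Z}\subset A\otimes {\cal Z}=A$ is a separable simple ${\cal Z}$-stable \SCA\ containing $\sum_i a_i\otimes z_i.$ By Theorem \ref{TTZsr1} (equivalently Corollary \ref{Ctadsr1}), $C_1$ is either purely infinite or has stable rank one. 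To exclude the first alternative, suppose $C_1$ were purely infinite; then $C_1,$ being simple, would contain an infinite projection $p,$ and $p$ would remain an infinite projection in $A.$ By \cite[Corollary 3.1]{TomsW07} the corner $pAp$ is a unital simple ${\cal Z}$-stable \CA\ in which $p$ is infinite, hence $pAp$ is purely infinite by Rørdam's dichotomy \cite{Rordam-2004-Jiang-Su-stable}; since $A$ is simple, this forces $A$ to be purely infinite, contradicting our standing assumption. Therefore $C_1$ has stable rank one. Consequently $\sum_i a_i\otimes z_i+\lambda\in \td{C_1}$ can be approximated within $\ep/2$ by an invertible element of $\td{C_1},$ which is invertible in $\td A$ under the unital inclusion $\td{C_1}\subset \td A.$ Combining the two $\ep/2$ estimates yields $z\in GL(\td A)$ with $\|(y+\lambda)-z\|<\ep.$ As $\ep$ and $y+\lambda$ were arbitrary, $A$ has stable rank one.

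The main obstacle I anticipate is the separabilization step: I must guarantee a separable \SCA\ that is \emph{simultaneously} simple and ${\cal Z}$-stable and that (approximately) contains the prescribed element. The device of tensoring a separable simple $C\subset A$ by ${\cal Z}$ and reinserting it via $A\cong A\otimes {\cal Z}$ sidesteps the need to build ${\cal Z}$-stability by an intertwining argument, but it relies on the identification $A=A\otimes {\cal Z}$ being respected by the approximation $y\approx \sum_i a_i\otimes z_i$; care is needed to phrase this so that no hidden separability or unitality hypothesis creeps in. The remaining points—transferring the purely-infinite/stable-rank-one dichotomy from $C_1$ back to $A,$ and the harmless passage through $\td{C_1}\subset \td A$—are routine once the subalgebra is in hand.
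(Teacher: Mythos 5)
Your argument is correct in substance and follows the same overall strategy as the paper: separabilize inside $A\otimes{\cal Z}$, apply Theorem \ref{TTZsr1} to a separable simple ${\cal Z}$-stable subalgebra containing (an approximant of) the given element, and push the invertible back into $\td A.$ The two proofs diverge in how the dichotomy is managed. The paper first disposes of the case where $A$ contains a non-zero projection $p$ by applying R\o rdam's unital theorem to $pAp$ and then invoking Brown and Rieffel to transfer stable rank one from the corner to $A$; in the remaining case $A$ is projectionless, so the separable subalgebra $B\otimes{\cal Z}\subset A$ is automatically projectionless, hence not purely infinite, and Theorem \ref{TTZsr1} yields stable rank one for it with no further argument. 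You instead keep the hypothesis ``$A$ not purely infinite'' and must rule out pure infiniteness of $C_1=C\otimes{\cal Z}$ a posteriori, which you do by extracting an infinite projection and passing through the corner $pAp.$ That step works, but your citation of \cite[Corollary 3.1]{TomsW07} to get ${\cal Z}$-stability of $pAp$ is stated there for \emph{separable} $A$ (the paper itself quotes it with that hypothesis in the introduction); since the corollary you are proving makes no separability assumption, you should avoid it. The clean fix is to skip the corner entirely: $A$ simple and ${\cal Z}$-stable has almost unperforated $W(A)$ (R\o rdam), so by Corollary 5.1 of \cite{Rordam-2004-Jiang-Su-stable} (Proposition 4.9 of \cite{FLII}) $A$ is either purely infinite or stably finite, and an infinite projection inherited from $C_1$ contradicts stable finiteness directly. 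A second, harmless difference: the paper uses \cite[Lemma 4.4]{Rordam-2004-Jiang-Su-stable} to produce an isomorphism $\alpha:A\otimes{\cal Z}\to A$ with $\alpha(x\otimes 1_{\cal Z})\approx_{\ep/2}x$ and then transports the invertible by $\td\alpha,$ whereas you simply approximate $y$ by finite sums of elementary tensors under a fixed identification $A=A\otimes{\cal Z}$; your device is marginally more elementary and equally valid, since every element of the tensor product is a norm limit of such sums.
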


\begin{proof}
If $A$ contains a non-zero projection $p,$ 
then  by \cite[Theorem 6.7]{Rordam-2004-Jiang-Su-stable}, $pAp$ is either purely  infinite, 
or has stable rank one. So Corollary follows by \cite{Br} and \cite{Rie83}.
Therefore we may assume that $A$ is projectionless. 
Let $x+\lambda\in\wtd A,$
where $x\in A, \lambda\in\mathbb{C}.$ 
Let $\ep>0.$
Since ${\cal Z}\cong \bigotimes_{n=1}^\infty{\cal Z}$ 
(see \cite[Corollary 8.8]{JS1999})
and $A$ is ${\cal Z}$-stable, 
there is an isomorphism $\alpha: A\otimes {\cal Z} \to A$
such that $\alpha(x\otimes 1_{\cal Z})\approx_{\ep/2} x$
(see \cite[Lemma 4.4]{Rordam-2004-Jiang-Su-stable}). 
Note that $\alpha$ extends to an isomorphism 
$\wtd \alpha: (A\otimes {\cal Z})\wtd\ \to \wtd A.$

By the fact that $A$ is simple and by using Lemma 3.7 of \cite{EGLN-KK=0} repeatedly,
we obtain a sequence of separable $C^*$-subalgebras 
$C^*(x)\subset B_1\subset B_2\subset...\subset A$ such that 
$B_n$ is full in $B_{n+1}$ for all $n\in\mathbb{N}.$ 
It follows that $B=\overline{\cup_{n\in\mathbb{N}}B_n}$ is 
separable and simple.
Then $B\otimes\cal Z$ is separable, 
simple, 
${\cal Z}$-stable and projectionless. 
By Theorem \ref{TTZsr1}, there is an invertible element $z\in GL((B\otimes {\cal Z})\wtd\ )\subset GL((A\otimes {\cal Z})\wtd\ )$ such that 
$z\approx_{\ep/2}x\otimes 1_{\cal Z}+\lambda.$ 
Then $\wtd \alpha(z)\in GL(\wtd A)$ and 
$\tilde \alpha(z)\approx_{\ep/2}
\wtd \alpha(x\otimes 1_{\cal Z}+\lambda)\approx_{\ep/2} x+\lambda.$
It follows that $A$ has stable rank one. 
\end{proof}

\begin{prop}
\label{TR0-812}
Let $A$ be a unital {{infinite-dimensional}} separable simple  \CA\, with tracial rank zero. 
Then $A$ is {{essentially}}
 tracially  in the class of  ${\cal Z}$-stable \CAs.
\end{prop}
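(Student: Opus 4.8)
The plan is to run the tracial-rank-zero approximation in the usual way and then to \emph{thicken} the resulting finite-dimensional algebra into a ${\cal Z}$-stable subalgebra without disturbing the elements that matter. First I would fix $\ep>0$, a finite subset ${\cal F}\subset A^{\bf 1}$ and $s\in A_+\setminus\{0\}$, and apply the definition of tracial rank zero in its robust form (\cite{LinTAF,LinTRK}) with data $({\cal F},\ep',s)$ for a small $\ep'\le\ep/2$. This produces a projection $p\in A$ and a finite-dimensional \SCA\ $F=\bigoplus_{k=1}^{K}M_{n_k}\subset A$ with $1_F=p$, equipped with matrix units $\{e^{(k)}_{ij}\}$ and block units $q_k=\sum_i e^{(k)}_{ii}$, such that $\|px-xp\|<\ep'$, $\di(pxp,F)<\ep'$, $1-p\lesssim s$ and $\|pxp\|\ge\|x\|-\ep'$ for all $x\in{\cal F}$. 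Put $e:=1-p\in A_+^{\bf 1}$. Then conditions (1) and (3) of Definition \ref{DTrapp} hold at once, since $\|[e,x]\|=\|[p,x]\|<\ep'$ and $e=1-p\lesssim s$; the norm requirement in (2) holds because $\|(1-e)x\|=\|px\|\ge\|pxp\|\ge\|x\|-\ep'$. Moreover $pxp^\perp=[p,x]p^\perp$ gives $\|pxp^\perp\|<\ep'$, so $(1-e)x=px\approx_{\ep'}pxp\in_{\ep'}F$, i.e.\ $(1-e)x\in_{2\ep'}F$. Hence everything reduces to producing a ${\cal Z}$-stable \SCA\ $B\subset A$ with $F\subset B$, for then $(1-e)x\in_{2\ep'}B$ and $2\ep'\le\ep$.

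To build $B$ I would work inside the relative commutant of $F$. For each $k$ the corner $C_k:=e^{(k)}_{11}Ae^{(k)}_{11}$ is again a unital, simple, infinite-dimensional \CA\ with tracial rank zero, and there is the standard isomorphism $M_{n_k}'\cap q_kAq_k\cong C_k$, $\,a\mapsto\sum_i e^{(k)}_{i1}ae^{(k)}_{1i}$, carrying the unit $e^{(k)}_{11}$ of $C_k$ to $q_k$. The central step is to find inside each $C_k$ a unital simple infinite-dimensional AF \SCA\ $D_k$ with $1_{D_k}=e^{(k)}_{11}$. I would construct this by a Bratteli-diagram argument internal to $C_k$: starting from $\C\,e^{(k)}_{11}$ and using real rank zero (to split projections below $e^{(k)}_{11}$), the SP property, and the simplicity of $C_k$ (to manufacture the required Murray--von Neumann equivalences by choosing partial isometries, and to keep the diagram connected so that the limit is simple), one obtains a nested chain of unital finite-dimensional subalgebras whose closure $D_k$ is a unital simple infinite-dimensional AF algebra. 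Transporting the $D_k$ back through the above isomorphisms yields commuting copies $\widetilde D_k\subset M_{n_k}'\cap q_kAq_k$ with $1_{\widetilde D_k}=q_k$; set $D:=\bigoplus_{k}\widetilde D_k\subset F'\cap pAp$, a unital algebra with $1_D=\sum_k q_k=p$.

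Finally I would put $B:=C^*(F,D)=F\vee D$. Since $F$ and $D$ commute and $1_D=p=1_F$, we have $F\subset B$, and the spatial tensor identification gives $B\cong\bigoplus_{k=1}^{K}M_{n_k}\otimes D_k$. Each summand $M_{n_k}\otimes D_k$ is a simple unital infinite-dimensional AF algebra, hence ${\cal Z}$-stable, and a finite direct sum of ${\cal Z}$-stable algebras is ${\cal Z}$-stable; therefore $B$ is a ${\cal Z}$-stable \SCA\ of $A$ containing $F$. Combined with the first paragraph this verifies all three clauses of Definition \ref{DTrapp}, and since ${\cal F},\ep,s$ were arbitrary, $A$ is essentially tracially in the class of ${\cal Z}$-stable \CA s.

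The hard part is the middle paragraph: building, inside an abstract (possibly non-nuclear) simple TR${}=0$ corner, a unital simple infinite-dimensional AF subalgebra with a \emph{prescribed} unit. One cannot read off the needed equivalences from traces (equality of traces need not give equality in $K_0$), so the projection equivalences must be produced by hand from simplicity and comparison while maintaining the exact unit at each stage; note also that a genuine (norm) containment $F\subset B$ is unavoidable here, because the trace-smallness of $1-p$ gives no norm control. The remaining ingredient, that infinite-dimensional simple unital AF algebras and finite direct sums of them are ${\cal Z}$-stable, is standard and can be cited.
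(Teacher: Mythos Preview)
Your proof is correct and follows the same architecture as the paper's: apply the tracial-rank-zero approximation to get $p$ and $F=\bigoplus_k M_{n_k}$, then thicken $F$ to $B\cong\bigoplus_k M_{n_k}\otimes D_k$ where each $D_k$ is a unital simple infinite-dimensional AF subalgebra of the corner $e^{(k)}_{11}Ae^{(k)}_{11}$, and conclude that $B$ is ${\cal Z}$-stable. The only substantive difference lies in how the $D_k$ are produced. The paper invokes Perera--R{\o}rdam \cite[Corollary~4.4]{PeRo04}, which in one stroke supplies a unital simple AF subalgebra $B_k$ of each corner (with the additional feature that $V(B_k)\to V(A)$ is surjective, from which infinite-dimensionality of $B_k$ is read off since $V(A)$ is not finitely generated). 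You instead sketch an internal Bratteli-diagram construction using real rank zero, SP, and simplicity to manufacture matrix units and keep the diagram connected; this is workable and more elementary, but as you note it is the genuinely laborious part, and the details (maintaining the exact unit, arranging the cross-equivalences needed for simplicity of the limit) are only outlined. The Perera--R{\o}rdam citation buys you exactly this: it packages that inductive construction (and more) into a single reference, so if you are writing this up you should either cite \cite{PeRo04} or commit to spelling out your Bratteli argument carefully.
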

\begin{proof}
Let $\ep>0,$ let ${\cal F}\subset A$ be a finite subset of $A$ and let $a\in A_+^{\bf 1}\setminus \{0\}.$
Since $A$ has tracial rank zero, there is a non-zero projection 
$p\in A$ and a {{finite-dimensional}}  \SCA\, $F\subset A$  with $1_F=p$
such that

 (1) $xp\approx_{\ep/2} px$ for all $x\in {\cal F},$
 
 (2) $pxp\in_{\ep/2} F,$ and 
 
 (3) $1-p\lesssim a.$
 
 Write $F=M_{r(1)}\oplus M_{r(2)}\oplus \dots \oplus M_{r(m)}.$
  Let $\{e_{i,j}^{(k)}\}_{1\le {{i,j}}\le  r(k)}$
 be a system of matrix units for $M_{r(k)},$ $1\le k\le m.$
 By 
{{\cite[Corollary 4.4]{PeRo04},}} 
 for each $k,$ there is a unital simple AF-algebra $B_k$ and unital embedding $\phi_k: B_k\to 
 \overline{e_{1,1}^{(k)}Ae_{1,1}^{(k)}}$ 
 such that $V(\phi): V(B_k)\to V( \overline{e_{1,1}^{(k)}Ae_{1,1}^{(k)}})
 {{\ =V(A)}}$ is surjective. 

{{We claim that $V(A)$ is not finitely generated.
To see this, suppose that $V(A)$ is generated by $[p_1], [p_2],...,[p_m].$
We may assume that $p_i\in M_l(A)$ (for some $l\ge 1$) is a non-zero projection, $1\le i\le m.$ 
Note that $M_l(A)$ is also a unital  infinite-dimensional simple \CA\, of real rank zero. 
By  repeatedly applying Lemma 1.1 of \cite{Zh},
for example,
we obtain a sequence of 
non-zero  projections $\{q_n\}\subset M_l(A)$ such that
$\lim_{n\to\infty} \sup\{\tau(q_n): \tau\in T(A)\}=0.$ 
 Then, for any non-negative integers $k_1, k_2,...,k_m$ (not all zero),  there is an integer $N\ge 1$ such that 
 $\tau(q_N)<\sum_{i=1}^m k_i\tau(p_i)$ for all $\tau\in T(A).$ 
 Hence $[q_N]$ is not in $V(A).$ This proves the claim.}}

Since $V(A)$ can not be finitely generated, we deduce that each $V(B_k)$ is not finitely generated either, and in particular 
 each $B_k$ is infinite-dimensional.
 Define $C_k:=\{\phi_k(b)\otimes e_{i,j}^{(k)}:{1\le i,j\le r(k)}, b\in B_k\}
 \cong B_k\otimes M_{r(k)}$
  ($1\le k\le m$)
 and $C:=\bigoplus_{k=1}^m C_k.$  Then,  $F\subset C.$
 By (2),
 \beq\label{TRR0-4}
 pxp\in_{\ep/2} C.
 \eneq
 Since each $C_k$ is a unital simple infinite-dimensional
 AF-algebra, $C_k$ is ${\cal Z}$-stable, $1\le k\le m$
 {{(see \cite[Corollary 6.3]{JS1999}).}}
 Therefore $C$ is ${\cal Z}$-stable.   By (1), \eqref{TRR0-4} and (3), $A$ is essentially tracially 
 in the class of ${\cal Z}$-stable \CA s. 
 \end{proof}

\begin{exm}
In \cite{NW}, {{Niu and Wang}} constructed a class of 
separable simple 
{{exact}}  
non-nuclear  {{ \CA  s}}  which {{have}}  tracial rank zero
but not ${\cal Z}$-stable.   
Then, by Proposition \ref{TR0-812},
Niu and Wang's examples are
unital 
separable simple
exact \CAs\ which are  
essentially tracially ${\cal Z}$-stable but not ${\cal Z}$-stable.
By  Theorem 5.9 of \cite{FLII}, these \CA s are {{particularly}} tracially approximately divisible.

\end{exm}

\section{Examples}
\begin{exm}\label{REx1}
It is shown in \cite[Theorem 5.9]{FLII} that a simple \CA\, $A$ which is {{essentially tracially}} in ${\cal C}_{{\cal Z}}$ (see \cite[Notation 4.1]{FLII}), 
then $A$ is tracially approximately divisible.  Any simple \CA s  $A^C_z$  constructed in Theorem 8.4
of \cite{FLII}  and any hereditary \SCA\, of $A_z^C$ (by Proposition 3.5 of \cite{FLII})  are tracially approximately divisible. 
Therefore all  (non-unital hereditary \SCA s) of \CA s in Theorem 8.6 of \cite{FLII} are tracially approximately 
divisible and non-nuclear \CA s.  
By Corollary \ref{Ctadsr1}, all these \CA s have stable rank one. 
\end{exm}

Recall that a ${\rm II}_1$ factor $(N,\tau)$ is said to have property $\Gamma,$
if there is a sequence of unitaries $\{u_n\}\subset N$ satisfying $\lim_{n\to\infty}\|u_nx-xu_n{{\|_2}}=0$ for all $x\in N,$ 
and $\tau(u_n)=0$ for all $n\in\mathbb{N}.$

\hspace{0.1in}

The following is  
well-known to experts.

\begin{prop}
\label{TAD-Gamma-F816}
Let $A $ be a unital {{infinite-dimensional}}   separable 
simple \CA\ 
with a unique tracial state $\tau$ which is also tracially approximately  divisible.
Let $\pi_\tau$  be the GNS representation with respect to $\tau,$
and 
$N:=\pi_\tau(A)''$
the weak closure of $\pi_\tau(A).$
Then $(N,\tau)$ is a ${\rm II}_1$ factor with {{property}} $\Gamma.$
\end{prop}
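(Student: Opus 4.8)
The plan is to first identify $N$ and then produce a nontrivial central sequence in the tracial ultrapower, using tracial approximate divisibility through property (TAD-3). Since $A$ is simple and $\tau \neq 0$, the trace $\tau$ is faithful, so $\pi_\tau$ is faithful and $\tau$ extends to a faithful normal tracial state on $N=\pi_\tau(A)''$; uniqueness of $\tau$ makes it an extreme tracial state, whence $N$ is a factor, and since $\pi_\tau$ is faithful and $A$ is infinite dimensional, $N$ is an infinite dimensional finite factor, i.e.\ a ${\rm II}_1$ factor. It then remains to establish Property $\Gamma$, which for a separable ${\rm II}_1$ factor is equivalent to $N'\cap N^\omega\neq\mathbb{C}1$ for a free ultrafilter $\omega$ on $\mathbb N$ (the standard characterization, as in \cite[Lemma A.7.3]{SS08}). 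Here $N^\omega=l^\infty(N)/\mathcal{I}_\omega$, with $\mathcal I_\omega=\{\{x_n\}:\lim_{n\to\omega}\|x_n\|_{2,\tau}=0\}$ and $\|y\|_{2,\tau}=\tau(y^*y)^{1/2}$, and $\pi_\omega$ the quotient map. Note that $A$ is unital, infinite dimensional and simple, hence non-elementary with no one-dimensional hereditary subalgebras, and tracially approximately divisible, so by Theorem \ref{TTAD==} it has property (TAD); by Theorem \ref{TAD-SC} it then has strict comparison.

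To produce the required element I would invoke Theorem \ref{TTAD==} once more to get property (TAD-3), furnishing a unital homomorphism $\hat\psi:M_2\to\pi_{cu}(A)'/\pi_{cu}(A)^\bot$ (the target is unital by Corollary \ref{perp-ideal-F814-2}, with unit the image of $\pi_{cu}(1_A)$). Composing with evaluation at $1$, $C_0((0,1])\otimes M_2\to M_2$, yields a homomorphism $\Phi$ into $\pi_{cu}(A)'/\pi_{cu}(A)^\bot$. By Proposition \ref{central-surj} the map $\pi':\pi_\infty(A)'\to\pi_{cu}(A)'$ is surjective, hence so is the composite $q$ onto $\pi_{cu}(A)'/\pi_{cu}(A)^\bot$; since the cone $C_0((0,1])\otimes M_2$ is projective I can lift $\Phi$ through $q$ to a homomorphism into $\pi_\infty(A)'$, and then lift again through $\pi_\infty:l^\infty(A)\to A_\infty$ to a homomorphism $\psi:C_0((0,1])\otimes M_2\to l^\infty(A)$. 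Writing $\psi=\{\psi_n\}$ and $\psi^0_n(x):=\psi_n(\iota\otimes x)$ gives c.p.c.\ order zero maps $\psi^0_n:M_2\to A$ with: (a) because the lift lands in $\pi_\infty(A)'$, $\lim_n\|\psi^0_n(x)a-a\psi^0_n(x)\|=0$ for all $x\in M_2$, $a\in A$; and (b) because $\hat\psi$ is unital and $\pi_{cu}(1_A)$ is the unit of $\pi_{cu}(A)$, the class of $\{1_A-\psi^0_n(1_{M_2})\}$ lies in $\pi_{cu}(A)^\bot$ and is annihilated by $\pi_{cu}(1_A)$, hence equals $0$ in $A_{cu}$, i.e.\ $1_A-\psi^0_n(1_{M_2})\in N_{cu}(A)$.

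The next step transfers these from the operator norm in $A$ to the $2$-norm in $N$. As $A$ is unital, $A={\rm Ped}(A)$, so by Proposition \ref{equiv-ideal-F812-1} (using strict comparison) $N_{cu}(A)=I_{T,0}$; since $\tau\in T_e$ for $e=1_A$, property (b) gives $\|1_A-\psi^0_n(1_{M_2})\|_{2,\tau}\to0$. Thus $\Psi:=\pi_\omega\circ\psi$ is a \emph{unital} c.p.c.\ order zero map $M_2\to N^\omega$, hence a unital $*$-homomorphism. To see $\Psi(M_2)\subset N'\cap N^\omega$, fix $x\in M_2$ and $y\in N$; by Kaplansky density choose $a\in A$ with $\|a\|\le\|y\|$ and $\|y-a\|_{2,\tau}$ small, and estimate
\[
\|\psi^0_n(x)y-y\psi^0_n(x)\|_{2,\tau}\le \|\psi^0_n(x)a-a\psi^0_n(x)\|_{2,\tau}+2\|x\|\,\|y-a\|_{2,\tau},
\]
where the first term tends to $0$ along $\omega$ by (a) (since $\|\cdot\|_{2,\tau}\le\|\cdot\|$) and the second is uniformly small. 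Hence $[\Psi(x),\pi_\omega(y)]=0$, so $\Psi$ embeds a unital copy of $M_2$ into $N'\cap N^\omega$, giving $N'\cap N^\omega\neq\mathbb{C}1$ and therefore Property $\Gamma$.

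I expect the main obstacle to be the bookkeeping of the iterated lifting: carrying the order-zero/cone structure through the two applications of projectivity together with the central surjectivity of Proposition \ref{central-surj}, and in particular verifying that unitality of $\hat\psi$ descends to property (b) for the concrete sequence $\{\psi^0_n\}$ (the self-annihilation computation $z=z\,\pi_{cu}(1_A)=0$ for $z\in\pi_{cu}(A)^\bot$). The remaining steps—identifying $N$ as a ${\rm II}_1$ factor, converting the $N_{cu}(A)$ condition to $2$-norm smallness via Proposition \ref{equiv-ideal-F812-1}, and the density/commutator estimate—are comparatively routine.
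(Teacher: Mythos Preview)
Your proof is correct and follows essentially the same approach as the paper: lift the unital $M_2$-map from (TAD-3) through central surjectivity (Proposition \ref{central-surj}) and cone projectivity to a sequence of c.p.c.\ order-zero maps into $A$, convert the $N_{cu}(A)$ membership of $\{1_A-\psi^0_n(1_{M_2})\}$ to $2$-norm smallness via Proposition \ref{equiv-ideal-F812-1}, and finish with a density/commutator estimate and \cite[Lemma A.7.3]{SS08}. The only cosmetic difference is that you package the conclusion as a unital embedding $M_2\hookrightarrow N'\cap N^\omega$, whereas the paper records that $\{\psi_n(e_{1,1})\}$ and $\{\psi_n(e_{2,2})\}$ are two orthogonal nontrivial central sequences.
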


\begin{proof}
Since $\tau$ is an extreme point of $T(A)=\{\tau\},$ 
$N$ is a ${\rm II}_1$ factor (\cite[Theorem 6.7.3]{Dix77}).
From Theorem \ref{TTAD==} we know that if $A$ is tracially approximately divisible then $A$ has property (TAD-3).
Thus there is a unital embedding 
$\hat \psi: M_2\to \pi_{cu}(A)'.$ 
By Proposition \ref{central-surj} 
and projectivity of $C_0((0,1])\otimes M_2,$
there is a \hm\ $\bar \psi: C_0((0,1])\otimes M_2\to \pi_\infty(A)'$
such that $\pi_\infty\circ \bar \psi(\iota\otimes e_{i,j})=\hat \psi(e_{i,j}),$
$1\le i,j\le 1.$
Again, using projectivity of $C_0((0,1])\otimes M_2,$ 
there is a \hm\, 
$ \psi:  C_0((0,1])\otimes M_2\to l^\infty(A)\subset l^\infty(N)$
that lifts $\bar \psi.$ 
We may  represent $\psi$ by a sequence of \hm s
$\psi_n:  C_0((0,1])\otimes M_2\to A\subset N.$
Then $\{\psi_n\}$ satisfies {{the following}}

(1) $\lim_{n\to\infty}\|\psi_n(x)a-a\psi_n(x)\|=0$
for all $x\in 
C_0((0,1])\otimes M_2$ and all $a\in A,$
and 

(2) $\{1_A-\psi_n(\iota\otimes 1_{M_2})\}\in N_{cu}(A).$

\noindent
By Proposition \ref{equiv-ideal-F812-1},
(2) implies 
$\lim_{n\to\infty}\tau(\psi_n(1_{M_2}))=1,$ 
hence
\beq
\label{cent-F-817-1}
\lim_{n\to\infty}\tau(\psi_n({{\iota\otimes e_{1,1}}}))=
\lim_{n\to\infty}\tau(\psi_n({{\iota\otimes  e_{2,2}}}))
=1/2.
\eneq


Let 
$y\in N$  and $\ep>0.$
Let $\|x\|_2=\tau(x^*x)^{1/2}$ for all $x\in N.$
Since $A$ is  
dense in $N$ in the strong operator topology,
there is $z\in A$ such that $\|y-z\|_{2}<\ep/4.$ 
By (1), there is $K\in\mathbb{N}$ such that 
$\|\psi_n({{\iota\otimes e_{i,i}}})z-z\psi_n(
\iota\otimes e_{i,i})\|<\ep/2$ 
for all $n\geq K$ and $i\in\{1,2\}.$
Then 
\beq
\label{cent-F-817-2}
\|\psi_n({{\iota\otimes e_{i,i}}})y-y\psi_n(
\iota\otimes e_{i,i})\|_{2}
\leq
\|\psi_n({{\iota\otimes e_{i,i}}})z
-z\psi_n({{\iota\otimes e_{i,i}}})\|_{2}+ \|y-z\|_{2}
< \epsilon.
\eneq
It follows \eqref{cent-F-817-1} and \eqref{cent-F-817-2}
that $\{\psi_n({{\iota\otimes e_{1,1}}})\}$ 
and 
$\{\psi_n({{\iota\otimes e_{2,2}}})\}$
are two mutually orthogonal 
nontrivial central sequences of $N.$
Therefore $N$ has property $\Gamma$ 
(see, for example \cite[Lemma A.7.3]{SS08}).
\end{proof}

We now present an example of unital 
non-elementary separable simple
exact (but non-nuclear) 
\CA\ 
that has  stable rank one, a unique tracial state,  strict comparison, 
and $0$-almost divisible Cuntz semigroup, 
and contains a unital embedded copy of the Jiang-Su algebra ${\cal Z}$,  
but is not tracially approximately divisible.
\begin{exm}

Let $C_r^*(\mathbb{F}_\infty)$ be the reduced 
group $C^*$-algebra of the free group on countably infinitely many generators. It is well known that $C_r^*(\mathbb{F}_\infty)$ is a unital infinite-dimensional 
separable simple
$C^*$-algebra with a unique tracial state $\tau$. 
It is also well known that $C_r^*(\mathbb{F}_\infty)$ is exact. 
Moreover, $C_r^*(\mathbb{F}_\infty)$ has stable rank one (\cite{DHR}) 
and 
has strict comparison for positive elements 
(see \cite[Proposition~6.3.2]{Rl1}). 
Hence, the Cuntz semigroup of  $C_r^*(\mathbb{F}_\infty)$ is 
almost divisible by 
\cite[Corollary~8.12]{T20}. 
By \cite[Proposition~6.3.1]{Rl1}, 
The Jiang-Su algebra ${\cal Z}$
can be unitally embedded into  $C_r^*(\mathbb{F}_\infty).$ 
On the other hand, the group von Neumann algebra 
$L(\mathbb{F}_\infty)$
does not have property $\Gamma$ (see, for example, \cite[Theorem A.7.2]{SS08}).
It follows from 
Proposition \ref{TAD-Gamma-F816}
that 
$C_r^*(\mathbb{F}_\infty)$ can not be 
tracially approximately divisible.
\end{exm}

From a  recent result of Ma and Wu in \cite{MW20} on groupoid $C^*$-algebras,
let us restate the following.



\begin{thm}{\rm(c.f.\cite[Theorem~9.7]{MW20})}
Let $G$ be a locally compact, 
second countable and 
Hausdorff étale minimal groupoid on a compact metrizable space without isolated points. 
Suppose $G$ is almost elementary. Then $C_r^*(G)$ is unital, 
separable, simple,
tracially ${\cal Z}$-absorbing, and, 
is either purely infinite, or has 
stable rank one. 
\end{thm}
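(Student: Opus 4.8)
The plan is to reduce the added dichotomy to Corollary \ref{Ctadsr1}, importing the structural conclusions from Ma--Wu and using the machinery developed earlier in this paper. By \cite[Theorem 9.7]{MW20}, under the stated hypotheses $A:=C_r^*(G)$ is already known to be unital, simple, separable, and tracially ${\cal Z}$-absorbing in the sense of \cite[Definition 2.1]{HO}; so the only new clause to establish is that $A$ is purely infinite or has stable rank one. First I would record that $A$ is non-elementary: tracial ${\cal Z}$-absorption forces $A\neq\C$, and a unital simple \CA\, that is not a full matrix algebra is infinite-dimensional, hence not isomorphic to ${\cal K}(H)$. (One could instead dispose of the finite-dimensional case at once, since a unital simple finite-dimensional algebra is some $M_k$, which trivially has stable rank one; but in fact $M_k$ is not tracially ${\cal Z}$-absorbing, so this case does not arise.)

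The core step is to match \cite[Definition 2.1]{HO} with one of the equivalent forms of tracial approximate divisibility treated here, namely property (TAD-2). Given $\ep>0$, a finite subset ${\cal F}\subset A$, an element $e_F\in A_+^{\bf 1}$ almost commuting with ${\cal F}$, an $s\in A_+\setminus\{0\}$, an integer $n\ge 1$ and a finite subset ${\cal G}\subset C_0((0,1])\otimes M_n$, I would apply the HO property to the enlarged finite set ${\cal F}\cup\{e_F,e_F^{1/2}\}$ with a sufficiently small tolerance, obtaining an order zero c.p.c.\ map $\psi\colon M_n\to A$ with $\|[\psi(g),x]\|$ small for $g\in M_n^{\bf 1}$ and $1-\psi(1_n)\lesssim s$. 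By the order-zero/cone correspondence, $\psi$ determines a homomorphism $\phi\colon C_0((0,1])\otimes M_n\to A$ with $\phi(\iota\otimes e_{i,j})=\psi(e_{i,j})$. Condition (1) of (TAD-2) then holds on ${\cal G}$ by continuity, while
\[
e_F-e_F^{1/2}\phi(\iota\otimes 1_n)e_F^{1/2}=e_F^{1/2}(1-\psi(1_n))e_F^{1/2}\lesssim 1-\psi(1_n)\lesssim s ,
\]
the first $\lesssim$ being the elementary fact that $c(1-\psi(1_n))c\sim (1-\psi(1_n))^{1/2}c^2(1-\psi(1_n))^{1/2}\le 1-\psi(1_n)$ for $c=e_F^{1/2}\le 1$. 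Cutting down by $\ep$ gives condition (2), so $A$ has property (TAD-2).

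Now I would invoke Theorem \ref{TTAD==}: since $A$ is simple, separable and non-elementary, property (TAD-2) is equivalent to $A$ being tracially approximately divisible. Finally, Corollary \ref{Ctadsr1} applies directly to the simple separable tracially approximately divisible algebra $A$, yielding that $A$ is either purely infinite or has stable rank one. This is exactly the dichotomy to be added to the conclusion of \cite[Theorem 9.7]{MW20}.

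The substantive content is entirely imported: the structural properties from \cite{MW20} and the dichotomy from Corollary \ref{Ctadsr1} (which in turn rests on Theorem \ref{TAD-SC}, Theorem \ref{TAD-TSR1}, and the results of \cite{FLII}). The only genuine work is the definitional bridge of the second paragraph. The main (and only mild) obstacle there is accommodating a general $e_F$ rather than $e_F=1_A$ and carrying the order-zero-to-cone passage together with the Cuntz-comparison estimate above, so that one lands precisely in the form demanded by (TAD-2); none of this is deep, but it must be carried out carefully to feed correctly into Theorem \ref{TTAD==}.
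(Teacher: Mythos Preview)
Your proposal is correct and follows essentially the same path as the paper: import the structural conclusions from \cite{MW20}, verify that tracial ${\cal Z}$-absorption in the sense of \cite{HO} yields property (TAD-2), and then apply the paper's dichotomy. The paper's own proof is slightly more economical in two ways: since $A$ is unital one may simply take $e_F=1_A$ (so the bridge to (TAD-2) is immediate from the remark following Definition~\ref{TAD}), and the paper then invokes only the implication $(3)\Rightarrow(2)$ of Theorem~\ref{TTAD==} (which is Proposition~\ref{PTAD2toTAD} and does not need separability) together with Corollary~\ref{SC-TSR}, rather than routing through Corollary~\ref{Ctadsr1}; but these are cosmetic differences and your argument is complete as written.
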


\begin{proof}
By \cite[Theorem~9.7]{MW20}, $C_r^*(G)$ is unital, 
separable, simple
and is tracially ${\cal Z}$-absorbing 
in the sense of \cite{HO}.   
It follows from Theorem \ref{TTAD==} 
($(3)\Rightarrow (2)$)
and Corollary \ref{Ctadsr1}
that $C_r^*(G)$ either has 
stable rank one or is purely infinite.
\end{proof}

We end this section by the following dichotomy result on flow actions (see Section 7 of \cite{Rl1} for examples of both cases).

\begin{thm}
Let $A$ be a separable \CA\ with finite nuclear dimension, 
and let $\af: \mathbb{R}\to {\rm Aut}(A)$
be a  flow  with no $\af$-invariant ideals and with {{finite
Rokhlin dimension}}. 
Then $A\rtimes_\af\mathbb{R}$ 
is either purely infinite or has stable rank one.
\end{thm}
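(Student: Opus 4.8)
The plan is to reduce the statement to Corollary \ref{Cor-Z-stable-F817} by showing that the crossed product $B:=A\rtimes_\af\mathbb{R}$ is a separable simple ${\cal Z}$-stable \CA; once that is established, the dichotomy between pure infiniteness and stable rank one follows immediately from that corollary, with no further dynamical input. Thus the entire task is to verify these three structural properties of $B$.

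First I would record that $B$ is separable, since $A$ is separable and $\mathbb{R}$ is second countable. The finite nuclear dimension of $B$ is exactly the permanence theorem for flows of finite Rokhlin dimension (Hirshberg--Szab\'o--Winter--Wu), which bounds $\dim_{\rm nuc}(A\rtimes_\af\mathbb{R})$ in terms of $\dim_{\rm nuc}(A)$ and $\dim_{\rm Rok}(\af)$; both are finite by hypothesis, so $\dim_{\rm nuc}(B)<\infty$, and in particular $B$ is nuclear. For simplicity I would combine the hypothesis that $\af$ has no nontrivial invariant ideals (i.e.\ $A$ is $\af$-simple) with the outerness/freeness built into finite Rokhlin dimension: finite Rokhlin dimension forces the flow to be sufficiently free, so that together with $\af$-simplicity the crossed product admits no nontrivial closed two-sided ideal. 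That some outerness is genuinely needed, beyond merely having no invariant ideals, is visible from the trivial flow on $\mathbb{C}$, for which $\mathbb{C}\rtimes\mathbb{R}\cong C_0(\mathbb{R})$ is not simple; that flow has infinite Rokhlin dimension, so it is excluded by our hypothesis.

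With $B$ simple, separable and of finite nuclear dimension, I would then invoke the now-standard implication of the Toms--Winter program (Winter in the unital case, and Castillejos--Evington--Tikuisis--White--Winter in the general simple separable nuclear case) that finite nuclear dimension implies ${\cal Z}$-stability. Hence $B$ is a simple ${\cal Z}$-stable \CA, and Corollary \ref{Cor-Z-stable-F817} gives that $B=A\rtimes_\af\mathbb{R}$ is either purely infinite or has stable rank one, as claimed.

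The step I expect to be the main obstacle is the simplicity of the $\mathbb{R}$-crossed product. The nuclear-dimension bound and the ${\cal Z}$-stability implication can be cited essentially as black boxes, but simplicity cannot be read off from ``no invariant ideals'' alone and requires extracting the correct freeness consequence of $\dim_{\rm Rok}(\af)<\infty$ for flows, where (in contrast to $\mathbb{Z}$-actions) one must handle the continuous parameter and the absence of a single generating automorphism. If a clean reference for this implication is unavailable, I would argue it directly: using the Rokhlin towers supplied by $\dim_{\rm Rok}(\af)<\infty$, I would show that any nonzero closed ideal of $B$ forces the presence of a nonzero $\af$-invariant ideal of $A$, which by $\af$-simplicity must be all of $A$, and hence the ideal of $B$ must be everything.
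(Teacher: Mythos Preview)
Your proposal is correct and follows essentially the same route as the paper: finite nuclear dimension of the crossed product via \cite{HSWW17}, simplicity from the absence of invariant ideals together with the freeness implied by finite Rokhlin dimension, ${\cal Z}$-stability of the resulting simple separable nuclear \CA\ from finite nuclear dimension (the paper cites Tikuisis \cite[Corollary~8.7]{Tik14} here rather than CETWW), and then Corollary~\ref{Cor-Z-stable-F817}. The paper treats the simplicity step in one clause, so your more careful discussion of it is if anything an improvement in exposition rather than a different strategy.
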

\begin{proof}
By \cite[Theorem 4.5]{HSWW17}, 
$A\rtimes_\af\mathbb{R}$ has finite nuclear dimension. 
Since $A$ is separable and has no $\af$-invariant ideals, and $\af$ has finite Rokhlin dimension,
we deduce that $A\rtimes_\af\mathbb{R}$ is separable and simple (see Corollary 3.12 of \cite{HSWW17}).
Then by \cite[Corollary 8.7]{Tik14}, 
$A\rtimes_\af\mathbb{R}$ is ${\cal Z}$-stable.
Then by Corollary \ref{Cor-Z-stable-F817},
$A\rtimes_\af\mathbb{R}$ 
is either purely infinite {{or}}
has stable rank one. 
\end{proof}

\textsc{Xuanlong Fu}

Department of Mathematics, 
University of Toronto, Toronto, Ontario, M5S 2E4, Canada

E-mail: xuanlongfu@qq.com 



\quad

\textsc{Kang Li}

Department of Mathematics,
KU Leuven,  
Celestijnenlaan 200b-box 2400, 3001 Leuven, Belgium

E-mail: kang.li@kuleuven.be

\quad

\textsc{Huaxin Lin}
 
Department of Mathematics, 
East China Normal University, 
Shanghai, China

and 

Department of Mathematics, University of Oregon, Eugene, OR 97403, USA

E-mail: hlin@uoregon.edu

\end{document}